\let\thm@indent\indent}{\let\thm@indent\noindent}%
  {}{}
\numberwithin{equation}{section}
\renewcommand\th@plain{\slshape}
\newtheoremstyle{plain}
  {1ex}
  {1ex}
  {\slshape}
  {}
  {\sffamily\bfseries}
  {.}
  {.5em}
  {}
\theoremstyle{plain}
\newtheorem{theorem}{Theorem}
\newtheorem{lemma}{Lemma}
\newtheorem*{claim*}{Claim}
\newtheoremstyle{definition}
  {1ex}
  {1ex}
  {\normalfont}
  {}
  {\sffamily\bfseries}
  {.}
  {.5em}
  {}
\theoremstyle{definition}
\newtheorem{definition}{Definition}
\newtheorem*{remark*}{Remark}
\crefname{section}{Section}{Sections}
\crefname{theorem}{Theorem}{Theorems}
\crefname{pretheorem}{Theorem}{Theorems}
\crefname{corollary}{Corollary}{Corollaries}
\crefname{lemma}{Lemma}{Lemmas}
\crefname{proposition}{Proposition}{Propositions}
\crefname{claim}{Claim}{Claims}
\crefname{definition}{Definition}{Definitions}
\crefname{notation}{Notation}{Notations}
\crefname{problem}{Problem}{Problems}
\crefname{question}{Question}{Questions}
\crefname{note}{Note}{Notes}
\crefname{remark}{Remark}{Remarks}
\crefname{example}{Example}{Examples}
\crefname{enumi}{}{}
\crefname{enumii}{}{}
\crefname{enumiii}{}{}
\def\csname ver@etex.sty\endcsname{3000/12/31}
\newcommand{\restore@Environment}[1]{%
  \AtBeginDocument{%
    \csletcs{#1*}{#1}%
    \csletcs{end#1*}{end#1}%
  }%
}
\forcsvlist\restore@Environment{alignat,equation,gather,multline,flalign,align}
\setlist{leftmargin=20pt}
\setlist[enumerate]{label=\textup{(\roman*)}}
\newcommand{\proofitem}[1]{\noindent\cref{#1}.}
\newcommand{\prooftitle}[1]{\noindent\textsf{#1}}
\newcommand{\textdf}[1]{\textsl{#1}}
\let\tmp\phi
\let\phi\varphi
\let\varphi\tmp
\let\tmp\epsilon
\let\epsilon\varepsilon
\let\varepsilon\tmp
\renewcommand{\leadsto}{\DOTSB\;\rightsquigarrow\;}
\newcommand{\midmid}{\mathrel{}\middle|\mathrel{}}
\newcommand{\semicolon}{\nobreak\mskip2mu\mathpunct{}\nonscript
\mkern-\thinmuskip{;}\mskip6muplus1mu\relax}
\renewcommand{\subset}{\subseteq}
\renewcommand{\mod}[1]{(\mathrm{mod}\ #1)}
\renewcommand{\and}{\quad\text{and}\quad}
\NewDocumentCommand{\xsideset}{mmme{_^}}{%
\mathop{%
\settowidth{\dimen0}{$\m@th\displaystyle#3$}%
\dimen0=.5\dimen0
\settowidth{\dimen2}{$%
\m@th\displaystyle#3%
\IfValueT{#4}{_{#4}}%
\IfValueT{#5}{^{#5}}%
$}%
\dimen2=.5\dimen2
\advance\dimen2 -\dimen0
\sbox6{\scriptspace\z@$\displaystyle{\vphantom{#3}}#1$}
\sbox8{\scriptspace\z@$\displaystyle{\vphantom{#3}}#2$}
\ifdim\wd6>\dimen2 \kern\dimexpr\wd6-\dimen2\relax\fi
{%
\mathop{\llap{\copy6}{\displaystyle#3}\rlap{\copy8}}\limits%
\IfValueT{#4}{_{#4}}%
\IfValueT{#5}{^{#5}}%
}%
\ifdim\wd8>\dimen2 \kern\dimexpr\wd8-\dimen2\relax\fi
}%
}
\DeclareMathOperator{\rev}{rev}
\newcommand{\arev}{\overleftarrow}
\DeclareMathOperator{\len}{len}
\begin{document}

\title[The Zsiflaw--Legeis theorem for arbitrary bases]
{The Zsiflaw--Legeis theorem for arbitrary bases}
\author[G. Bhowmik]{Gautami Bhowmik}
\author[Y. Suzuki]{Yuta Suzuki}
\keywords{Prime numbers, arithmetic progressions, reversed radix representation.}
\subjclass{%
Primary:
11A63; 
Secondary:
11N05, 
11N69. 
}

\begin{abstract}
In this paper,
we prove analogues of the Dirichlet theorem on arithmetic progressions
and the Siegel--Walfisz theorem for the digital reverses of primes for arbitrary bases,
which the authors obtained in the previous paper but only for large bases.
The proof is based on a generalization of the result of Martin--Mauduit--Rivat (2014)
on the exponential sums over primes with the so-called ``digital'' functions.
\end{abstract}

\maketitle

\section{Introduction}
\label{sec:intro}
In this paper,
we continue our study \cite{BhowmikSuzuki:Telhcirid} of the digital reverses of prime numbers.
In our previous paper~\cite{BhowmikSuzuki:Telhcirid},
we studied the distribution of reversed primes,
the integers obtained by reading the digital representation of primes backwards, in arithmetic progressions.
In \cite{BhowmikSuzuki:Telhcirid}, the size of the base is restricted to $\ge31699$,
which was recently improved by Chourasiya and Johnston to $\ge26000$ in \cite{ChourasiyaJohnston}.
In the current paper, as is announced in several places (see, e.g.~\cite[Subsection~1.2]{ChourasiyaJohnston},
or video and slides of \cite{Suzuki:OWNS}),
we shall remove the constraint on the base to include all bases greater than or equal to $2$.

We recall the formal definition of the digital reverse.
Take $g\in\mathbb{Z}_{\ge2}$ as the base of the digital representation
and write the base-$g$ representation of $n\in\mathbb{Z}_{\ge0}$ as
\begin{equation}
\label{sec:intro:base_g_representation}
n
=
\sum_{i\ge0}\epsilon_{i}(n)g^{i}
\quad\text{with}\quad
\epsilon_{i}(n)\in\{0,\ldots,g-1\}\ \text{for all $i\in\mathbb{Z}_{\ge0}$}.
\end{equation}
We define \text{the length $\len(n)$} of $n$ by
\[
\len(n)
\coloneqq
\min\{\ell\in\mathbb{Z}_{\ge0}\mid\text{$\epsilon_{i}(n)=0$ for all $i\ge\ell$}\},
\]
i.e.\ $\len(n)$ is the number of digits of $n$
with the convention $\len(0)=0$.
For $n\in\mathbb{Z}$, we define
\begin{equation}
\label{sec:intro:def:abs_digital_reverse}
\rev(n)
\coloneqq
\sum_{0\le i<\len(n)}\epsilon_{i}(n)g^{\len(n)-i-1},
\end{equation}
which is the integer obtained by reading the base-$g$ representation of $n$ backwards.

We now have the infinitude of reversed primes
in a given arithmetic progressions for all bases:
\begin{theorem}[Telhcirid's theorem on arithmetic progressions]
\label{thm:Telhcirid}
For $g,a,q\in\mathbb{Z}$ with
\[
g\ge2,\quad
q\ge1,\quad
(a,q,g^{2}-1)=1,\quad
g\nmid(a,q),
\]
there are infinitely many primes $p$ such that $\rev(p)\equiv a\ \mod{q}$.
\end{theorem}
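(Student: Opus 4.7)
The natural route is to derive \cref{thm:Telhcirid} from a Siegel--Walfisz-type asymptotic for
\[
\pi_{\rev}(x;q,a) \coloneqq \#\{p \le x \mid \rev(p)\equiv a\ \mod{q}\}.
\]
Stratifying by $\len(p)=\ell$ (so that on each slab $\rev$ is a fixed linear combination of the digit functions $\epsilon_i$) and expanding $\mathbf{1}_{\rev(p)\equiv a\ \mod{q}}$ via additive characters modulo $q$, the problem reduces to estimating
\[
S_\ell(b/q)\coloneqq\sum_{\substack{p\text{ prime}\\ \len(p)=\ell}}e\!\left(\tfrac{b}{q}\rev(p)\right),\qquad 1\le b<q.
\]
The contribution of those $b$ for which $e(b\,\rev(\cdot)/q)$ is trivial on the length-$\ell$ slab---precisely the $b$ detected by the congruences $\rev(p)\equiv p\ \mod{g-1}$, $\rev(p)\equiv\pm p\ \mod{g+1}$, and $\rev(p)\equiv\epsilon_{\ell-1}(p)\ \mod{g}$---furnishes the expected main term, whose positivity is guaranteed by the hypotheses $(a,q,g^2-1)=1$ and $g\nmid(a,q)$.

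The heart of the proof is a bound $|S_\ell(b/q)|\ll g^\ell(\log g^\ell)^{-A}$ uniformly over the remaining $b/q$, which amounts to a version of the Martin--Mauduit--Rivat (2014) exponential sum estimate for primes twisted by a digital function, but valid for \emph{every} $g\ge2$ rather than only for $g$ sufficiently large. Vaughan's identity reduces this to Type-I and Type-II bilinear sums; the Type-II analysis proceeds via Cauchy--Schwarz followed by a ``carry propagation'' argument comparing $\rev(mn)$ with a suitable rearrangement of the digits of $m$ and $n$. The previous thresholds $g\ge31699$ and $g\ge26000$ of \cite{BhowmikSuzuki:Telhcirid} and \cite{ChourasiyaJohnston} arose from a $g^{-c}$ loss in this carry analysis that had to dominate losses incurred elsewhere in the argument.

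The principal obstacle is therefore the small-base regime, where the digit alphabet is too small for the standard carry lemma to produce any useful saving. A natural remedy is to iterate van der Corput's $A$-process one or more times before invoking the digital estimate, trading the $g^{-c}$ saving for one that is logarithmic and uniform in $g$; alternatively, one can analyze $\rev$ directly at the digit level in the style of Drmota--Mauduit--Rivat, exploiting that on a fixed length slab $\rev$ is simply an additive combination of the digit functions $\epsilon_i$. Once a uniform exponential-sum bound is in hand, summation over $\ell\le\log_g x$ yields the Siegel--Walfisz asymptotic, from which \cref{thm:Telhcirid} follows.
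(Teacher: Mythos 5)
Your high-level plan---stratify by length, expand in additive characters mod $q$, and bound the resulting exponential sums over primes via Vaughan's identity plus Type I/Type II estimates in the Mauduit--Rivat style---is indeed the route the paper takes, and your identification of the ``trivial'' frequencies (those coming from $g-1$, $g+1$ and the leading digit mod $g$) corresponds to the major-arc term that the paper isolates via $\overleftarrow{\pi}_{\sharp}$ and \cref{thm:rev_psi_theta_pi_L}, \cref{thm:Zsiflaw_Legeis_pure}. But your proposal stops exactly where the real work begins. The uniform-in-$g$ exponential sum bound cannot be quoted from Martin--Mauduit--Rivat: on a length-$L$ slab the relevant phase is $\tfrac{h}{q}\rev_{L}(n)=\tfrac{h}{q}\sum_{i}\epsilon_{i}(n)g^{L-i-1}$, which is \emph{not} a digital function in their sense (the coefficient of $\epsilon_{i}(n)$ depends on the position $i$, and the function is not $O(\log n)$), so their theorem does not apply for any base, large or small. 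The paper's key new idea is the generalization to weakly digital functions with seeds $\bm{\alpha}=(\alpha_{i})_{i}\in\mathscr{A}_{g}$, and a rerun of the whole MMR machinery uniformly in the seed and its shifts $\bm{\alpha}^{[j]}$: the product formula, the $L^{\infty}$ bound in terms of $\gamma_{i}(\bm{\alpha})$, the congruence-restricted $L^{1}$ bound through a modified auxiliary function $\Psi$ (\cref{lem:F_L1_moment}, \cref{lem:Psi_bound_S_neq_g}, \cref{lem:Psi_bound_S_eq_g}), the hybrid bound, and then the Type I/II estimates, culminating in \cref{thm:exp_sum} with saving $g^{-\kappa}$, $\kappa\asymp\sigma_{\xi}(\bm{\alpha})$, which for the reverse seed is $\gg\log x/\log q$ by \cref{lem:reverse:sigma_lower}. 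None of this appears, even in outline, in your write-up; ``a version of the MMR estimate valid for every $g\ge2$'' is precisely the statement that has to be proved.

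Moreover, your diagnosis of the old base restriction, and hence your proposed remedies, point in the wrong direction. The thresholds $g\ge31699$ and $g\ge26000$ came from the discrete circle method in the style of Maynard used in the earlier papers, not from a $g^{-c}$ loss in a carry lemma inside a Type-II MMR argument; in the present framework the carry/truncation step (\cref{lem:truncation}) is uniform in $g$ and costs only $O(MN/R)$, and base-uniformity of the saving comes from the $L^{1}$ exponent $\eta_{g}<\tfrac{1}{2}$ for all $g\ge2$ (via $\Theta_{g}>g^{-3}$ in \cref{lem:Psi_bound_S_eq_g}) combined with the positional $L^{\infty}$ saving $\gamma_{i}(\bm{\alpha})$---not from iterating van der Corput's $A$-process, nor from a separate Drmota--Mauduit--Rivat-style digit analysis, neither of which you carry out. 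As it stands, the proposal is a plausible programme whose central estimate is assumed rather than established, so it does not yet constitute a proof of the theorem.
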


Note that the conditions $(a,q,g^{2}-1)=1$ and $g\nmid(a,q)$ are necessary conditions
for the infinitude stated in \cref{thm:Telhcirid}. See, e.g. Section~1 of \cite{BhowmikSuzuki:Telhcirid}.

As in \cite{BhowmikSuzuki:Telhcirid},
we have a quantitative version of \cref{thm:Telhcirid}.
In order to avoid having a wild behavior of the main term,
we just count the reversed primes of a fixed digit $L\in\mathbb{Z}$,
i.e.\ let
\[
\arev{\pi}\!_{L}(a,q)
\coloneqq
\#\{g^{L-1}\le p<g^{L}\mid\rev(p)\equiv a\ \mod{q}\}.
\]
Then, the quantitative version of \cref{thm:Telhcirid} is the following:
\begin{theorem}[Zsiflaw--Legeis theorem]
\label{thm:Zsiflaw_Legeis}
For $g,a,q\in\mathbb{Z}$ and $x\ge1$ with $g\ge2$ and $q\ge1$, we have
\[
\overleftarrow{\pi}\!_{L}(a,q)
=
\frac{\rho_{g}(a,q)}{q}
\frac{g^{L}}{\log g^{L}}
\biggl(1+O\biggl(\frac{1}{L}\biggr)\biggr)
+
O(g^{L}\exp(-c\sqrt{L}))
\]
provided
\begin{equation}
\label{cor:ZsiflawLegeis:d_cond}
q
\le
\exp(c\sqrt{L})
\end{equation}
with some constant $c>0$, where the function $\rho_{g}(a,q)$ is given by
\[
\rho_{g}(a,q)
\coloneqq
\left\{
\begin{array}{>{\displaystyle}cl}
\biggl(
1
-
\mathbbm{1}_{(q,g)\mid a}
\frac{(q,g)}{g}
\biggr)
\frac{(q,g^{2}-1)}{\phi((q,g^{2}-1))}
&\text{if $(a,q,g^{2}-1)=1$ and $g\nmid (a,q)$},\\[6mm]
0&\text{otherwise}
\end{array}
\right.
\]
and $c$ and the implicit constant depend only on $g$ and are effectively computable.
\end{theorem}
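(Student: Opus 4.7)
The plan is to detect the congruence $\rev(p)\equiv a\pmod{q}$ by the orthogonality of additive characters modulo $q$, reducing the problem to estimates for the exponential sums
\[
S_{L}(h)\coloneqq\sum_{g^{L-1}\le p<g^{L}}e\biggl(\frac{h\rev(p)}{q}\biggr),\quad h\in\{0,1,\dots,q-1\},
\]
via
\[
\arev{\pi}\!_{L}(a,q)=\frac{1}{q}\sum_{h\pmod{q}}e\biggl(-\frac{ha}{q}\biggr)S_{L}(h).
\]
Since $p\in[g^{L-1},g^{L})$ has $\len(p)=L$, the phase in $S_{L}(h)$ expands as a linear form $\sum_{0\le i<L}\epsilon_{i}(p)(hg^{L-1-i}/q)$ in the digits of $p$, putting it squarely in the Martin--Mauduit--Rivat framework for exponential sums over primes twisted by digital functions.

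\textbf{Main term extraction.} I would split the $h$-sum into a resonant part producing the main term and a generic part handled by cancellation. The resonant $h$'s are those for which the digital phase above degenerates, via the base structure, into a congruence condition on $p$: exploiting $g\equiv1\pmod{g-1}$ and $g\equiv-1\pmod{g+1}$ one obtains $\rev(p)\equiv\pm p\pmod{(q,g^{2}-1)}$, while modulo $(q,g)$ only the leading digit $\epsilon_{L-1}(p)$ contributes. Feeding this into Siegel--Walfisz for primes in progressions modulo $(q,g^{2}-1)$ produces the factor $(q,g^{2}-1)/\phi((q,g^{2}-1))$, and summing over admissible leading digits with $\gcd(\epsilon_{0}(p),g)=1$ (forced by $p$ prime) yields the factor $1-\mathbbm{1}_{(q,g)\mid a}(q,g)/g$. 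Together these build the density $\rho_{g}(a,q)$, whose support condition $(a,q,g^{2}-1)=1$ and $g\nmid(a,q)$ matches precisely the compatibility conditions for the above congruences to admit prime solutions.

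\textbf{Exponential sum bound.} The crux of the argument is the bound
\[
|S_{L}(h)|\ll g^{L}\exp(-c\sqrt{L})
\]
for all generic $h$, uniformly for $q\le\exp(c\sqrt{L})$ and for every base $g\ge2$. Here enters the announced generalization of Martin--Mauduit--Rivat (2014). The template is: apply Vaughan's identity to decompose $\Lambda$ as a sum of Type~I and Type~II bilinear forms; apply van der Corput's inequality to reduce Type~II sums to an $L^{2}$-mean of short digital exponential sums; and apply a Fourier estimate for the digital character $n\mapsto e(h\rev(n)/q)$ to extract cancellation. For large $g$ the per-digit Fourier saving comfortably beats the van der Corput loss, but for small $g$ the two are nearly equal and one must work harder -- grouping several digits into a block before Fourier analysis, or iterating van der Corput an extra time -- to preserve a positive net saving.

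\textbf{Main obstacle.} The decisive obstacle is precisely this refinement of the exponential sum bound for small bases, in particular $g=2$. The threshold $g\ge26000$ of \cite{ChourasiyaJohnston} is the current state of the art, and pushing all the way down to $g=2$ requires a genuinely new Fourier input into the Type~II estimate rather than a quantitative tweak. Once this bound is available, assembling the main term (from the resonant $h$'s via Siegel--Walfisz modulo $(q,g^{2}-1)$, whose own range of uniformity is compatible with $q\le\exp(c\sqrt{L})$) and absorbing the generic-$h$ contributions into the error $O(g^{L}\exp(-c\sqrt{L}))$ is a routine summation, with $c$ adjusted along the way.
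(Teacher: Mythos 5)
There is a genuine gap, and it sits exactly at the heart of the matter. Your reduction via orthogonality of additive characters, the split into resonant $h$ (those with $q\mid g^{L}(g^{2}-1)h$, giving congruences modulo $(q,g^{2}-1)$ and a leading/trailing-digit condition modulo $(q,g)$, fed into Siegel--Walfisz to build $\rho_{g}(a,q)$) and generic $h$, matches the architecture of the paper, which runs the same reduction and then quotes the earlier paper for the main-term bookkeeping. But the entire content of the theorem for all bases is the bound $|S_{L}(h)|\ll g^{L}\exp(-c\sqrt{L})$ for the non-resonant $h$, uniformly in $q\le\exp(c\sqrt{L})$ and in $g\ge2$, and your proposal does not prove it: you first invoke it as ``the announced generalization of Martin--Mauduit--Rivat'' and then, under ``Main obstacle'', state that establishing it for small $g$ ``requires a genuinely new Fourier input'' beyond the current state of the art. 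Gesturing at ``grouping several digits into a block'' or ``iterating van der Corput'' is not an argument; as written, the proposal assumes the very estimate it identifies as open, so the proof is incomplete.

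Moreover, the difficulty is misdiagnosed. The thresholds $g\ge31699$ and $g\ge26000$ come from the discrete-circle-method approach of the earlier papers, not from the Mauduit--Rivat/Martin--Mauduit--Rivat direct method used here; in the latter there is no small-base obstruction of the kind you describe, since the $L^{1}$-type saving (the exponent $\eta_{g}<\tfrac12$ of \cref{lem:Psi_bound}) holds for every $g\ge2$. What actually has to be done -- and what the paper does -- is (a) extend the digital-function machinery to phases whose per-digit weight depends on the position, because $h\rev_{L}(n)/q=\sum_{i<L}\epsilon_{i}(n)\,hg^{L-1-i}/q$ is not digital in the MMR sense (this forces the shifted seeds $\bm{\alpha}^{[j]}$ through the product formula, the $L^{\infty}$, $L^{1}$ and hybrid bounds, and the Type I/II estimates, culminating in \cref{thm:exp_sum}); and (b) prove a quantitative lower bound for the accumulated saving, $\sigma_{\lambda}(\bm{\alpha}_{L})\gg\lambda/\log(1/\sigma)$ with $\sigma=\min_{i\le L}\|g^{i}(g^{2}-1)h/q\|\ge1/q$ (\cref{lem:reverse:sigma_lower}), which yields a saving $\exp(-c\log x/\log q)$, i.e.\ $\exp(-c\sqrt{L})$ in the stated range of $q$. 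Neither of these two ingredients appears in your proposal, so the step from the MMR template to the uniform-in-$g$ bound is missing rather than merely sketched.
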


\begin{remark*}
During the final write-up of this paper,
we discovered that Dartyge, Rivat and Swaenepoel \cite[Theorem~1.4]{DRS:BV}
have independently obtained the above results.
\end{remark*}

Our previous result~\cite{BhowmikSuzuki:Telhcirid}
was proved by using the discrete circle method
in the style of Maynard~\cite{Maynard:PnP}.
In order to improve the range of the base,
we use a more direct method of Mauduit and Rivat~\cite{MauduitRivat:Gelfond}
(see also Maynard~\cite{Maynard:RestrictedDigitP}),
which was used to study the distribution of the sum of digits in arithmetic progressions over primes.
We shall reduce the problem to the estimate of the exponential sum of the form
\begin{equation}
\label{sec:intro:exponential_sum}
\sum_{n\le x}\Lambda(n)e(f(n))
\end{equation}
with a function $f$ related to some digital property of integers.
In Mauduit--Rivat~\cite{MauduitRivat:Gelfond},
$f$ is chosen to be the sum-of-digit function,
and in our case, $f$ will be the digital reverse with a certain modification.
The method of Mauduit and Rivat to bound the exponential sum \cref{sec:intro:exponential_sum}
has been generalized by Martin--Mauduit--Rivat~\cite{MartinMauduitRivat:Digital}
by replacing the sum-of-digit function
by a class of more general arithmetic functions called ``digital functions''.
An arithmetic function $f\colon\mathbb{Z}_{\ge0}\to\mathbb{R}$
defined with a map $\alpha\colon\{0,\ldots,g-1\}\to\mathbb{R}$ by
\begin{equation}
\label{sec:intro:def:digital}
f(n)
\coloneqq
\sum_{0\le i<\len(n)}\alpha(\epsilon_{i}(n))
\end{equation}
is said to be ``digital'' by Martin--Mauduit--Rivat.
However, our digital reverse $\rev$ (or its modification) is not a digital function in the above sense
since any arithmetic function $f$ satisfying \cref{sec:intro:def:digital} enjoys the bound
\[
f(n)
\le
\biggl(\max_{0\le m<g}\alpha(m)\biggr)\len(n)
\ll
\log n
\]
while $rev$ does not. We thus generalize the notion of digital functions
one step more to the following ``weakly'' digital functions
by allowing the function $\alpha$ now depending on the ``position'' $i$:
\begin{definition}
\label{def:}
Let $\mathscr{A}_{g}$ be the set of sequences of maps
\[
\bm{\alpha}=(\alpha_{i}\colon\{0,\ldots,g-1\}\to\mathbb{R})_{i=0}^{\infty}.
\]
For $\bm{\alpha}\in\mathscr{A}_{g}$ and $\lambda\in\mathbb{Z}_{\ge0}$, we define a map
$f_{\lambda}=f_{\lambda,\bm{\alpha}}\colon\mathbb{Z}_{\ge0}\to\mathbb{R}$
by
\[
f_{\lambda,\bm{\alpha}}
\coloneqq
\sum_{0\le i<\lambda}\alpha_{i}(\epsilon_{i}(n)).
\]
We call such a sequence of functions $(f_{\lambda})_{\lambda=0}^{\infty}$
a \textdf{weakly digital function generated by the seed $\bm{\alpha}$}.
Note that $f_{\lambda}$ has a period $g^{\lambda}$.
\end{definition}

The resulting estimate for the exponential sum will be stated as \cref{thm:exp_sum} below
(see also \cref{def:gamma} and \cref{def:sigma_j_alpha} for some definitions).
For the proof of \cref{thm:exp_sum},
we follow the argument of Martin--Mauduit--Rivat~\cite{MartinMauduitRivat:Digital},
but we need to take care of the shifts of $\bm{\alpha}\in\mathscr{A}_{g}$
and a modification of the auxiliary function $\Psi$ given in \cref{def:Psi} below
(cf. the formula (19) of \cite{MartinMauduitRivat:Digital}).

This paper is organized as follows.
We first prove some basic properties of weakly digital functions
in \cref{sec:basic_wdf}.
We then prepare some lemmas on the exponential sum with weakly digital functions
in \cref{sec:exp_sum}, especially the $L^{1}$, $L^{\infty}$ and hybrid bounds.
In \cref{sec:TypeI} and \cref{sec:TypeII},
we prepare estimates for the Type I and Type II sums.
In \cref{sec:exp_sum_over_prime_wdf}, we shall prove \cref{thm:exp_sum}.
Finally, we deduce \cref{thm:Telhcirid}, \cref{thm:Zsiflaw_Legeis} and \cref{thm:Zsiflaw_Legeis_pure}
in \cref{sec:digital_reverse}.
We expect to develop further results on the distribution of the weakly digital function over primes in a later paper.

\section{Notation}
\label{sec:notation}
Besides those introduced in the main body,
we use the following notations.

The letter $p$ is reserved for prime numbers.
The letter $c$ is used for positive constants
which can take different values line by line.
The arithmetic function $\phi(n)$ is the Euler totient function,
$\mu(n)$ is the M\"{o}bius function,
and $\tau(n)$ is the divisor function, i.e.\ the number of positive divisors of $n$.
For integers $n_{1},\ldots,n_{r}$,
we write $(n_{1},\ldots,n_{r})$ for the greatest common divisor of $n_{1},\ldots,n_{r}$.

For a real number $x$, we let $e(x)\coloneqq\exp(2\pi ix)$, while
$[x]$ denotes the integer part of $x$, i.e.\ the greatest integer $\le x$
and $\|x\|\coloneqq\min_{n\in\mathbb{Z}}|x-n|$ denotes the distance between $x$ and its nearest integer.

For a logical formula $P$,
we write $\mathbbm{1}_{P}$
for the indicator function of $P$.

If a theorem or a lemma is stated
with the phrase ``where the implicit constant depends on $a,b,c,\ldots$'',
then every implicit constant in the corresponding proof
may also depend on $a,b,c,\ldots$ without being specifically mentioned.

\section{Basic properties of the weakly digital functions}
\label{sec:basic_wdf}
As mentioned in the introduction, we need to consider the shifts of the seed $\bm{\alpha}\in\mathscr{A}_{g}$.
Therefore, for $j\in\mathbb{Z}_{\ge0}$ and $\bm{\alpha}\in\mathscr{A}_{g}$,
we define the $j$-shift $\bm{\alpha}^{[j]}=(\alpha_{i}^{[j]})_{i=0}^{\infty}\in\mathscr{A}_{g}$ of $\bm{\alpha}$ by
\[
\alpha_{i}^{[j]}=\alpha_{i+j}.
\]
We also write
\[
f_{\lambda}^{[j]}
=
f_{\lambda,\bm{\alpha}}^{[j]}
\coloneqq
f_{\lambda,\bm{\alpha}^{[j]}}.
\]
In this section, we first prove a certain ``$g$-additivity'' of weakly digital functions $f_{\lambda}^{[j]}$.

Recall that a function $f\colon\mathbb{Z}_{\ge0}\to\mathbb{C}$ is called \textsl{$g$-additive} if
\[
f(g^{k}m+n)
=
f(g^{k}m)
+
f(n)
\]
for all $k,m,n\in\mathbb{Z}_{\ge0}$ with $0\le n<g^{k}$ and \textsl{strongly $g$-additive} if
\[
f(g^{k}m+n)
=
f(m)
+
f(n)
\]
for all $k,m,n\in\mathbb{Z}_{\ge0}$ with $0\le n<g^{k}$ 
(see e.g.~the second paragraph of Gelfond~\cite{Gelfond}).
Our weakly digital functions are, not strongly $g$-additive nor $g$-additive in a strict sense,
but we have the following similar identity:

\begin{lemma}
\label{lem:wdf_additive}
For $g\in\mathbb{Z}_{\ge2}$, $\bm{\alpha}\in\mathscr{A}_{g}$
and $\lambda,j,k,m,n\in\mathbb{Z}_{\ge0}$ with $0\le k\le\lambda$ and $n<g^{k}$, we have
\[
f_{\lambda}^{[j]}(g^{k}m+n)
=
f_{\lambda-k}^{[j+k]}(m)+f_{k}^{[j]}(n).
\]
\end{lemma}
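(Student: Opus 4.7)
The plan is to prove the identity by a direct digit-level computation, treating the identity $f_\lambda^{[j]}(g^k m + n) = f_{\lambda-k}^{[j+k]}(m) + f_k^{[j]}(n)$ as a reindexing of the sum that defines $f_\lambda^{[j]}$.

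First I would unpack the definitions: $f_\lambda^{[j]}(N) = \sum_{0 \le i < \lambda} \alpha_{i+j}(\epsilon_i(N))$ for all $N \in \mathbb{Z}_{\ge 0}$. The key digit-level observation is that, under the hypothesis $0 \le n < g^k$, the base-$g$ digits of $N := g^k m + n$ decompose cleanly, namely
\[
\epsilon_i(g^k m + n) = \begin{cases}\epsilon_i(n) & \text{if } 0 \le i < k, \\ \epsilon_{i-k}(m) & \text{if } i \ge k.\end{cases}
\]
This follows from the uniqueness of the base-$g$ expansion once we write $n = \sum_{0 \le i < k} \epsilon_i(n) g^i$ and $g^k m = \sum_{i \ge k} \epsilon_{i-k}(m) g^i$, and note that the combined coefficients lie in $\{0, \ldots, g-1\}$.

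Next I would split the sum defining $f_\lambda^{[j]}(g^k m + n)$ at $i = k$, which is legitimate since $k \le \lambda$:
\[
f_\lambda^{[j]}(g^k m + n) = \sum_{0 \le i < k} \alpha_{i+j}(\epsilon_i(n)) + \sum_{k \le i < \lambda} \alpha_{i+j}(\epsilon_{i-k}(m)).
\]
The first sum is, by definition, exactly $f_k^{[j]}(n)$. For the second sum, I would substitute $i' = i - k$, so that $\alpha_{i+j} = \alpha_{i'+(j+k)} = \alpha_{i'}^{[j+k]}$ and the summation range becomes $0 \le i' < \lambda - k$, yielding $f_{\lambda-k}^{[j+k]}(m)$.

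There is really no obstacle here beyond keeping the index shifts straight: the interplay between the position shift $j \mapsto j+k$ induced on the seed $\bm{\alpha}$ and the length reduction $\lambda \mapsto \lambda - k$ is exactly what makes the two halves of the split match the shifted definitions on the right-hand side. The hypothesis $n < g^k$ is used only to ensure the digit decomposition above, and $0 \le k \le \lambda$ is used so that both summation ranges $[0,k)$ and $[k,\lambda)$ are sensible subintervals of $[0,\lambda)$.
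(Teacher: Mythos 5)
Your proposal is correct and follows essentially the same route as the paper: identify the digit decomposition $\epsilon_i(g^k m+n)=\epsilon_i(n)$ for $i<k$ and $\epsilon_{i-k}(m)$ for $i\ge k$, split the defining sum at $i=k$, and reindex the upper block using $\alpha_{i+k}^{[j]}=\alpha_i^{[j+k]}$. No gaps; the justification of the digit decomposition via uniqueness of the base-$g$ expansion matches the paper's computation.
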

\begin{proof}
Since $n<g^{k}$, we have
\[
g^{k}m+n
=
\sum_{i\ge 0}\epsilon_{i}(m)g^{i+k}
+
\sum_{0\le i<k}\epsilon_{i}(n)g^{i}
=
\sum_{i\ge k}\epsilon_{i-k}(m)g^{i}
+
\sum_{0\le i<k}\epsilon_{i}(n)g^{i},
\]
and so
\[
\epsilon_{i}(g^{k}m+n)
=
\left\{
\begin{array}{>{\displaystyle}cl}
\epsilon_{i}(n)&\text{for $0\le i<k$},\\[2mm]
\epsilon_{i-k}(m)&\text{for $i\ge k$}.
\end{array}
\right.
\]
By recalling $0\le k\le\lambda$, we thus have
\begin{align}
f_{\lambda}^{[j]}(g^{k}m+n)
&=
\sum_{k\le i<\lambda}
\alpha_{i}^{[j]}(\epsilon_{i-k}(m))
+
\sum_{0\le i<k}
\alpha_{i}^{[j]}(\epsilon_{i}(n))\\
&=
\sum_{0\le i<\lambda-k}
\alpha_{i+k}^{[j]}(\epsilon_{i}(m))
+
f_{k}^{[j]}(n)\\
&=
\sum_{0\le i<\lambda-k}
\alpha_{i}^{[j+k]}(\epsilon_{i}(m))
+
f_{k}^{[j]}(n)
=
f_{\lambda-k}^{[j+k]}(m)
+
f_{k}^{[j]}(n).
\end{align}
This completes the proof.
\end{proof}

\begin{lemma}
\label{lem:sum_cleanup}
For $g\in\mathbb{Z}_{\ge2}$,
$\bm{\alpha}\in\mathscr{A}_{g}$,
$L\in\mathbb{Z}_{\ge0}$, $x\in\mathbb{R}_{\ge1}$ and $\beta\in\mathbb{R}$
with $1\le x\le g^{L}$, we have
\[
\biggl|
\sum_{0\le n<x}
e(f_{L}^{[j]}(n)-\beta n)
\biggr|
\le
(g-1)
\sum_{0\le\lambda\le\frac{\log\lceil x\rceil}{\log g}}
\biggl|
\sum_{0\le n<g^{\lambda}}
e(f_{\lambda}^{[j]}(n)-\beta n)
\biggr|.
\]
\end{lemma}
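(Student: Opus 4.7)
The plan is to decompose the sum on the left-hand side, which runs over the integer interval $\{0,1,\ldots,N-1\}$ with $N\coloneqq\lceil x\rceil$, into blocks of length $g^{i}$ indexed by the base-$g$ digits of $N$, and then apply \cref{lem:wdf_additive} on each block to separate the ``high'' and ``low'' parts of $f_{L}^{[j]}$.

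Concretely, write $N=\sum_{i=0}^{\Lambda}d_{i}g^{i}$ in base $g$ with $d_{\Lambda}\ge 1$, so $\Lambda\le\log N/\log g$, and set $a_{i}\coloneqq\sum_{k>i}d_{k}g^{k}$. The standard greedy decomposition gives the disjoint union
\[
\{0,\ldots,N-1\}=\bigsqcup_{i=0}^{\Lambda}\bigsqcup_{0\le e<d_{i}}\bigl\{a_{i}+eg^{i}+m\;\big|\;0\le m<g^{i}\bigr\}.
\]
Because $1\le N\le g^{L}$ forces $0\le i\le\Lambda\le L$, \cref{lem:wdf_additive} applies on each block and yields
\[
f_{L}^{[j]}(a_{i}+eg^{i}+m)=f_{L-i}^{[j+i]}\bigl(a_{i}/g^{i}+e\bigr)+f_{i}^{[j]}(m)
\]
for $0\le m<g^{i}$, using that $a_{i}/g^{i}\in\mathbb{Z}_{\ge 0}$.

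Substituting this into the left-hand side and pulling out the factor independent of $m$, the contribution of the block $(i,e)$ becomes $e(\phi_{i,e})\sum_{0\le m<g^{i}}e(f_{i}^{[j]}(m)-\beta m)$, where $\phi_{i,e}\coloneqq f_{L-i}^{[j+i]}(a_{i}/g^{i}+e)-\beta(a_{i}+eg^{i})\in\mathbb{R}$ depends only on $(i,e)$. Taking absolute values, using $|e(\phi_{i,e})|=1$, bounding $\sum_{0\le e<d_{i}}1=d_{i}\le g-1$, and finally extending $i$ to the full range $0\le\lambda\le\log\lceil x\rceil/\log g$ (extra terms only enlarge the right-hand side) yields the claimed inequality.

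There is no genuine obstacle; the proof is a straightforward digit-by-digit unrolling, completely analogous to the well-known telescoping used to estimate incomplete exponential sums of strongly $g$-additive functions. The only points to check are the hypothesis $i\le L$ of \cref{lem:wdf_additive} (which follows from $N\le g^{L}$) and the fact that $x\ge 1$ makes $N\ge 1$ so that $\Lambda$ is well defined.
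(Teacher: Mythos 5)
Your argument is correct. The only difference from the paper is organizational: the paper proves \cref{lem:sum_cleanup} by induction on $k=[\frac{\log N}{\log g}]$, peeling off the leading base-$g$ digit at each step (writing $N=g^{k}y+z$ with $1\le y<g$, $0\le z<g^{k}$, bounding the $y$ complete blocks of length $g^{k}$ via \cref{lem:wdf_additive} by $(g-1)$ times the complete sum of length $g^{k}$, and feeding the tail $z$ into the induction hypothesis), whereas you unroll that induction into a single explicit greedy decomposition of $\{0,\ldots,N-1\}$ into the blocks $\{a_{i}+eg^{i}+m:0\le m<g^{i}\}$ indexed by the digits $d_{i}$ of $N$, with the factor $g-1$ arising directly as the bound $d_{i}\le g-1$. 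Both proofs hinge on exactly the same ingredient, the quasi-$g$-additivity identity of \cref{lem:wdf_additive} applied with $k=i\le L$, and both handle the passage from $x$ to $N=\lceil x\rceil$ and the range condition $\Lambda\le\frac{\log\lceil x\rceil}{\log g}$ in the same way; your version trades the induction for the one-shot disjoint-union bookkeeping, which is slightly more direct, while the paper's recursion avoids having to display the full digit decomposition. Your checks of the hypotheses of \cref{lem:wdf_additive} (namely $i\le L$ from $N\le g^{L}$, and $a_{i}/g^{i}+e\in\mathbb{Z}_{\ge0}$) are exactly the points that need verifying, so there is no gap.
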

\begin{proof}
Let $N\coloneqq\lceil x\rceil\ge1$. We then have
\begin{equation}
\label{lem:sum_cleanup:x_to_N}
\sum_{0\le n<x}
e(f_{L}^{[j]}(n)-\beta n)
=
\sum_{0\le n<N}
e(f_{L}^{[j]}(n)-\beta n).
\end{equation}
Let $k\coloneqq[\frac{\log N}{\log g}]$ so that $g^{k}\le N<g^{k+1}$.
Since $x\le g^{L}$, we have $k\le L$. We then write
\begin{equation}
\label{lem:sum_cleanup:long_division}
N=g^{k}y+z
\quad\text{with}\quad
y\in[1,g)\cap\mathbb{Z}
\ \text{and}\ 
z\in[0,g^{k})\cap\mathbb{Z}.
\end{equation}
We prove the assertion by induction on $k\in\{0,\ldots,L\}$.
\medskip

\prooftitle{Initial case $k=0$.}
In this case, we have $N\le g-1$, and so the trivial bound gives
\[
\biggl|
\sum_{0\le n<x}
e(f_{L}^{[j]}(n)-\beta n)
\biggr|
=
\biggl|
\sum_{0\le n<N}
e(f_{L}^{[j]}(n)-\beta n)
\biggr|
\le
N
\le
g-1
\]
and so the assertion holds. 
\medskip

\prooftitle{Induction step.}
Assume that the assertion holds for the case $0\le k<K$ with $K\in\{1,\ldots,L\}$.
We shall prove the case $k=K$.
By \cref{lem:sum_cleanup:long_division}, we have
\begin{equation}
\label{lem:sum_cleanup:decomp}
\begin{aligned}
\sum_{0\le n<N}
e(f_{L}^{[j]}(n)-\beta n)
&=
\sum_{0\le m<y}
\sum_{g^{K}m\le n<g^{K}(m+1)}
e(f_{L}^{[j]}(n)-\beta(n))\\
&\hspace{0.2\textwidth}
+
\sum_{g^{K}y\le n<N}
e(f_{L}^{[j]}(n)-\beta(n))\\
&=
\sum_{0\le m<y}
\sum_{0\le n<g^{K}}
e(f_{L}^{[j]}(g^{K}m+n)-\beta(g^{K}m+n))\\
&\hspace{0.2\textwidth}
+
\sum_{0\le n<z}
e(f_{L}^{[j]}(g^{K}y+n)-\beta(g^{K}y+n))\\
&\eqqcolon
{\sum}_{1}+{\sum}_{2},\quad\text{say}.
\end{aligned}
\end{equation}
For the sum ${\sum}_{1}$, by using \cref{lem:wdf_additive} and $y\in[1,g)\cap\mathbb{Z}$, we have
\begin{align}
\label{lem:sum_cleanup:sum1}
\Bigl|
{\sum}_{1}
\Bigr|
&=
\biggl|
\sum_{0\le m<y}
e(f_{L-K}^{[j+K]}(m)-\beta g^{K}m)
\sum_{0\le n<g^{K}}
e(f_{K}^{[j]}(n)-\beta n)
\biggr|\\
&\le
(g-1)
\biggl|
\sum_{0\le n<g^{K}}
e(f_{K}^{[j]}(n)-\beta n)
\biggr|.
\end{align}
By using \cref{lem:wdf_additive}, we can bound ${\sum}_{2}$ as
\[
\Bigl|{\sum}_{2}\Bigr|
=
\biggl|
e(f_{L-K}^{[j+K]}(y)-\beta g^{K}y)
\sum_{0\le n<z}
e(f_{K}^{[j]}(n)-\beta n)
\biggr|
\le
\biggl|
\sum_{0\le n<z}
e(f_{K}^{[j]}(n)-\beta n)
\biggr|.
\]
When $z\ge1$, we have
$0\le[\frac{\log\lceil z\rceil}{\log g}]=[\frac{\log z}{\log g}]<K$
and so the induction hypothesis implies
\begin{equation}
\label{lem:sum_cleanup:sum2}
\Bigl|{\sum}_{2}\Bigr|
\le
(g-1)
\sum_{0\le\lambda<K}
\biggl|
\sum_{0\le n<g^{\lambda}}
e(f_{\lambda}^{[j]}(n)-\beta n)
\biggr|.
\end{equation}
This holds clearly if $z=0$.
On inserting \cref{lem:sum_cleanup:sum1} and \cref{lem:sum_cleanup:sum2} into \cref{lem:sum_cleanup:decomp}
and using \cref{lem:sum_cleanup:x_to_N}, we obtain
\[
\biggl|
\sum_{0\le n<x}
e(f_{L}^{[j]}(n)-\beta n)
\biggr|
\le
(g-1)
\sum_{0\le\lambda\le K}
\biggl|
\sum_{0\le n<g^{\lambda}}
e(f_{\lambda}^{[j]}(n)-\beta n)
\biggr|,
\]
i.e.\ the assertion for the case $k=K$.
\end{proof}

\section{Exponential sums for weakly digital functions}
\label{sec:exp_sum}
We study the distribution of weakly digital functions via the discrete Fourier analysis.
For a given $\bm{\alpha}\in\mathscr{A}_{g}$, $\lambda,j\in\mathbb{Z}_{\ge0}$ and $\beta\in\mathbb{R}$,
we thus introduce a normalized exponential sum
\begin{equation}
\label{def:F_lambda}
F_{\lambda}^{[j]}(\beta)
\coloneqq
\frac{1}{g^{\lambda}}
\sum_{0\le n<g^{\lambda}}
e(f_{\lambda}^{[j]}(n)-\beta n)
\quad\text{and}\quad
F_{\lambda}(\beta)
\coloneqq
F_{\lambda}^{[0]}(\beta).
\end{equation}
Note that the function
$\beta\mapsto F_{\lambda}^{[j]}(\beta)$
has period $1$.
For $\bm{\alpha}\in\mathscr{A}_{g}$, $i,j\in\mathbb{Z}_{\ge0}$ and $\beta\in\mathbb{R}$,
we also use
\[
\phi_{i}^{[j]}(\beta)
\coloneqq
\biggl|\sum_{0\le n<g}e(\alpha_{i}^{[j]}(n)-\beta n)\biggr|
\quad\text{and}\quad
\phi_{i}(\beta)
\coloneqq
\phi_{i}^{[0]}(\beta).
\]
In this section, we prepare several results on the exponential sum $F_{\lambda}^{[j]}(\beta)$.

\subsection{Product formula}
\label{subsec:product_formula}
The key for studying digital properties of integers is,
as in the preceding studies, the following product formula for the exponential sum $F_{\lambda}^{[j]}(\beta)$:
\begin{lemma}
\label{lem:F_product_formula}
For $g\in\mathbb{Z}_{\ge2}$,
$\bm{\alpha}\in\mathscr{A}_{g}$,
$\lambda,j\in\mathbb{Z}_{\ge0}$ and $\beta\in\mathbb{R}$, we have
\[
|F_{\lambda}^{[j]}(\beta)|
=
\frac{1}{g^{\lambda}}
\prod_{0\le i<\lambda}
\phi_{i}^{[j]}(\beta g^{i}).
\]
\end{lemma}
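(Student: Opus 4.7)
The plan is to exploit the uniqueness of the base-$g$ representation to convert the single sum over $0 \le n < g^{\lambda}$ into a $\lambda$-fold product of single-digit exponential sums. The key observation is that both the weakly digital function $f_{\lambda}^{[j]}$ and the linear term $\beta n$ split additively with respect to the digits $\epsilon_i(n)$.

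Concretely, I would parametrize each $n \in [0, g^{\lambda}) \cap \mathbb{Z}$ by its digit vector $(\epsilon_0, \ldots, \epsilon_{\lambda-1}) \in \{0, \ldots, g-1\}^{\lambda}$ via $n = \sum_{0 \le i < \lambda} \epsilon_i g^i$. By the definition of $f_{\lambda}^{[j]}$ one has $f_{\lambda}^{[j]}(n) = \sum_{0 \le i < \lambda} \alpha_i^{[j]}(\epsilon_i)$, and trivially $\beta n = \sum_{0 \le i < \lambda} \beta g^i \epsilon_i$. Consequently the phase $e(f_{\lambda}^{[j]}(n) - \beta n)$ factors as $\prod_{0 \le i < \lambda} e(\alpha_i^{[j]}(\epsilon_i) - \beta g^i \epsilon_i)$, and the multi-sum over the digit vector separates into a product of $\lambda$ independent single-digit sums:
\[
g^{\lambda} F_{\lambda}^{[j]}(\beta) = \prod_{0 \le i < \lambda} \sum_{n=0}^{g-1} e(\alpha_i^{[j]}(n) - \beta g^i n).
\]
Taking absolute values on both sides and recognizing each factor as $\phi_i^{[j]}(\beta g^i)$ yields the claimed identity.

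This is essentially a one-step calculation, so there is no real obstacle; the only thing to be mindful of is that the product formula holds only after taking absolute values (the factors themselves carry phases that do not in general combine cleanly without the modulus). No earlier lemma such as \cref{lem:wdf_additive} is needed here, although one can view the digit-by-digit separation as an iterated application of the $g$-additivity identity proved there.
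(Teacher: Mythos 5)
Your proposal is correct and is essentially identical to the paper's proof: both factor the sum over $n\in[0,g^{\lambda})$ through the digit vector $(\epsilon_0,\ldots,\epsilon_{\lambda-1})$, split the phase additively, and separate the multi-sum into a product of single-digit sums, identifying each factor's modulus as $\phi_i^{[j]}(\beta g^i)$. The only tiny imprecision is your closing parenthetical: the factorization $g^{\lambda}F_{\lambda}^{[j]}(\beta)=\prod_i\sum_{0\le n<g}e(\alpha_i^{[j]}(n)-\beta g^i n)$ holds exactly as complex numbers (as you in fact wrote); the modulus is needed only because $\phi_i^{[j]}$ is defined as an absolute value.
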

\begin{proof}
By considering the base-$g$ representation of $n$ in \cref{def:F_lambda}, we have
\begin{align}
|F_{\lambda}^{[j]}(\beta)|
&=
\frac{1}{g^{\lambda}}
\biggl|
\sum_{0\le n_{0},\cdots,n_{\lambda-1}<g}
e\biggl(\sum_{0\le i<\lambda}\alpha_{i}^{[j]}(n_{i})-\sum_{0\le i<\lambda}\beta n_{i}g^{i}\biggr)
\biggr|\\
&=
\frac{1}{g^{\lambda}}
\prod_{0\le i<\lambda}
\biggl|
\sum_{0\le n<g}
e(\alpha_{i}^{[j]}(n)-\beta g^{i}n)
\biggr|.
\end{align}
By recalling the definition of $\phi_{i}^{[j]}(\beta)$, we obtain the assertion.
\end{proof}

\begin{lemma}
\label{lem:F_recursion}
For $g\in\mathbb{Z}_{\ge2}$, $\bm{\alpha}\in\mathscr{A}_{g}$,
$\lambda\in\mathbb{N}$, $j\in\mathbb{Z}_{\ge0}$ and $\beta\in\mathbb{R}$, we have
\[
|F_{\lambda}^{[j]}(\beta)|
=
|F_{\lambda-1}^{[j+1]}(\beta g)|
\times
\frac{1}{g}
\phi_{j}(\beta).
\]
\end{lemma}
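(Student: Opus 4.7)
The plan is to reduce the identity to the product formula of \cref{lem:F_product_formula} and then peel off the $i=0$ factor. First I would apply \cref{lem:F_product_formula} to the left-hand side to get
\[
|F_{\lambda}^{[j]}(\beta)|
=
\frac{1}{g^{\lambda}}
\phi_{0}^{[j]}(\beta)
\prod_{1\le i<\lambda}
\phi_{i}^{[j]}(\beta g^{i}),
\]
where the $i=0$ factor is separated out. By definition of the shift $\bm{\alpha}^{[j]}$, we have $\alpha_{0}^{[j]}=\alpha_{j}=\alpha_{j}^{[0]}$, so $\phi_{0}^{[j]}(\beta)=\phi_{j}(\beta)$.

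Next I would re-index the remaining product by $i'=i-1$ and use the identity $\alpha_{i'+1}^{[j]}=\alpha_{i'+1+j}=\alpha_{i'}^{[j+1]}$, which gives $\phi_{i'+1}^{[j]}=\phi_{i'}^{[j+1]}$. Thus
\[
\prod_{1\le i<\lambda}\phi_{i}^{[j]}(\beta g^{i})
=
\prod_{0\le i'<\lambda-1}\phi_{i'}^{[j+1]}((\beta g)\,g^{i'}).
\]
Applying \cref{lem:F_product_formula} a second time, with parameters $\lambda-1$, $j+1$, $\beta g$ in place of $\lambda$, $j$, $\beta$, this last product equals $g^{\lambda-1}|F_{\lambda-1}^{[j+1]}(\beta g)|$.

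Combining these steps yields
\[
|F_{\lambda}^{[j]}(\beta)|
=
\frac{1}{g^{\lambda}}\,\phi_{j}(\beta)\cdot g^{\lambda-1}\,|F_{\lambda-1}^{[j+1]}(\beta g)|
=
\frac{1}{g}\,\phi_{j}(\beta)\,|F_{\lambda-1}^{[j+1]}(\beta g)|,
\]
as required. There is no real obstacle here: the entire content is an index shift packaged around \cref{lem:F_product_formula}, and the only place to be slightly careful is the bookkeeping of the two different shifts—namely verifying $\phi_{0}^{[j]}=\phi_{j}$ and $\phi_{i+1}^{[j]}=\phi_{i}^{[j+1]}$—so that the reduced product genuinely matches the expansion of $|F_{\lambda-1}^{[j+1]}(\beta g)|$.
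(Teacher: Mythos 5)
Your proposal is correct and follows essentially the same route as the paper: apply \cref{lem:F_product_formula}, split off the $i=0$ factor using $\phi_{0}^{[j]}=\phi_{j}$, and re-index the remaining product via $\phi_{i+1}^{[j]}=\phi_{i}^{[j+1]}$ to recognize it as $g^{\lambda-1}|F_{\lambda-1}^{[j+1]}(\beta g)|$. No issues.
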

\begin{proof}
By \cref{lem:F_product_formula}, we have
\[
|F_{\lambda}^{[j]}(\beta)|
=
\frac{1}{g^{\lambda-1}}
\prod_{1\le i<\lambda}
\phi_{i}^{[j]}(\beta g^{i})
\times
\frac{1}{g}\phi_{0}^{[j]}(\beta).
\]
By shifting the variable $i$ by $1$
and using $\phi_{i+1}^{[j]}=\phi_{i}^{[j+1]}$ and $\phi_{0}^{[j]}=\phi_{j}^{[0]}=\phi_{j}$,
we obtain
\begin{align}
|F_{\lambda}^{[j]}(\beta)|
&=
\frac{1}{g^{\lambda-1}}
\prod_{0\le i<\lambda-1}
\phi_{i}^{[j+1]}(\beta g\cdot g^{i})
\times
\frac{1}{g}\phi_{j}(\beta)
=
|F_{\lambda-1}^{[j+1]}(\beta g)|
\times
\frac{1}{g}
\phi_{j}(\beta).
\end{align}
This completes the proof.
\end{proof}

\subsection{The \texorpdfstring{$L^{\infty}$}{L infinity}-bound}
\label{subsec:L_infty}
We next prepare a pointwise bound for $F_{\lambda}^{[j]}(\beta)$.

\begin{lemma}
\label{lem:phi_bound}
For $g\in\mathbb{Z}_{\ge2}$, $\bm{\alpha}\in\mathscr{A}_{g}$,
$i,j,m,n\in\mathbb{Z}_{\ge0}$ with $0\le m<n\le g$
and $\beta\in\mathbb{R}$, we have
\[
\phi_{i}^{[j]}(\beta)
\le
g\exp\biggl(-\frac{8}{g}\|\alpha_{i}^{[j]}(m)-\alpha_{i}^{[j]}(n)-\beta(m-n)\|^{2}\biggr).
\]
\end{lemma}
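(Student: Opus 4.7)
The strategy is to isolate, in the defining sum
\[
\phi_{i}^{[j]}(\beta) = \biggl|\sum_{k=0}^{g-1} e(\alpha_{i}^{[j]}(k) - \beta k)\biggr|,
\]
the two terms with $k = m$ and $k = n$, since these encode the quantity $\theta := \alpha_{i}^{[j]}(m) - \alpha_{i}^{[j]}(n) - \beta(m-n)$ appearing in the lemma. The remaining $g - 2$ terms will be estimated trivially by the triangle inequality. Factoring a common unit-modulus factor out of the two chosen terms and using $|1 + e(t)| = 2|\cos\pi t|$ gives
\[
\bigl|e(\alpha_{i}^{[j]}(m) - \beta m) + e(\alpha_{i}^{[j]}(n) - \beta n)\bigr| = 2|\cos\pi\theta|,
\]
so that
\[
\phi_{i}^{[j]}(\beta) \le 2|\cos\pi\theta| + (g - 2).
\]

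The next step is to convert the $|\cos\pi\theta|$-bound into an $\|\theta\|^{2}$-bound. Since the left side is $1$-periodic in $\theta$, one may reduce modulo $1$ and assume $|\theta| = \|\theta\| \le 1/2$, so that $\cos\pi\theta \ge 0$. Applying the identity $\cos 2x = 1 - 2\sin^{2} x$ with $x = \pi\theta/2$ gives
\[
\phi_{i}^{[j]}(\beta) \le g - 4\sin^{2}(\pi\theta/2).
\]
I would then use the concavity of $\sin$ on $[0, \pi/2]$: the chord joining $(0, 0)$ and $(1/2, \sin(\pi/4)) = (1/2, \sqrt{2}/2)$ lies below the graph of $\theta \mapsto \sin(\pi\theta/2)$ on $[0, 1/2]$, yielding $\sin(\pi\|\theta\|/2) \ge \sqrt{2}\,\|\theta\|$, hence $\sin^{2}(\pi\theta/2) \ge 2\|\theta\|^{2}$. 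Substituting produces
\[
\phi_{i}^{[j]}(\beta) \le g - 8\|\theta\|^{2},
\]
and dividing by $g$ combined with $1 - t \le e^{-t}$ gives the asserted $\phi_{i}^{[j]}(\beta) \le g\exp(-8\|\theta\|^{2}/g)$.

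The main point requiring care is the choice of trigonometric identity in the middle step. The more obvious route of writing $|z_{m}+z_{n}|^{2} = 4 - 4\sin^{2}\pi\theta$ and applying $\sin^{2}\pi\theta \ge 4\|\theta\|^{2}$ directly would only give $2|\cos\pi\theta| \le 2 - 4\|\theta\|^{2}$, which loses a factor of two in the exponent and yields only $\exp(-4\|\theta\|^{2}/g)$. The half-angle route $|\cos\pi\theta| = 1 - 2\sin^{2}(\pi\theta/2)$ combined with the sharper concavity bound $\sin^{2}(\pi\theta/2) \ge 2\|\theta\|^{2}$ (which is an equality at both endpoints $\theta = 0$ and $\theta = \pm 1/2$) is what produces the tight constant $8$ in the exponent.
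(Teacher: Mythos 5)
Your proposal is correct and follows essentially the same route as the paper: isolate the two terms $k=m,n$, bound the rest by $g-2$, reduce to $2|\cos\pi\theta|\le 2(1-4\|\theta\|^{2})$, and finish with $1-t\le e^{-t}$. The only difference is that you supply a proof (half-angle identity plus concavity of $\sin$ on $[0,\pi/2]$) of the elementary inequality $\cos\pi\|\theta\|\le 1-4\|\theta\|^{2}$, which the paper simply quotes.
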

\begin{proof}
Since $m\neq n$, we have
\begin{align}
\phi_{i}^{[j]}(\beta)
&\le
|e(\alpha_{i}^{[j]}(m)-\beta m)
+
e(\alpha_{i}^{[j]}(n)-\beta n)|
+
\biggl|
\sum_{\substack{
0\le k<g\\
k\not\in\{m,n\}
}}
e(\alpha_{i}^{[j]}(k)-\beta k)
\biggr|\\
&\le
|1+e(\alpha_{i}^{[j]}(m)-\alpha_{i}^{[j]}(n)-\beta(m-n))|
+
g-2.
\end{align}
We now use the inequality
\[
|1+e(\beta)|
=
2|\cos\pi\beta|
=
2\cos\pi\|\beta\|
\le
2(1-4\|\beta\|^{2})
\]
available for all $\beta\in\mathbb{R}$. This gives
\begin{align}
\phi_{i}^{[j]}(\beta)
&\le
2(1-4\|\alpha_{i}^{[j]}(m)-\alpha_{i}^{[j]}(n)-\beta(m-n)\|^{2})
+
g-2\\
&=
g\biggl(1-\frac{8}{g}\|\alpha_{i}^{[j]}(m)-\alpha_{i}^{[j]}(n)-\beta(m-n)\|^{2}\biggr)\\
&\le
g\exp\biggl(-\frac{8}{g}\|\alpha_{i}^{[j]}(m)-\alpha_{i}^{[j]}(n)-\beta(m-n)\|^{2}\biggr)
\end{align}
since $\frac{8}{g}\|\beta\|^{2}\le\frac{2}{g}\le1$ for any $\beta\in\mathbb{R}$.
This completes the proof.
\end{proof}

For $g\in\mathbb{Z}_{\ge2}$, $\bm{\alpha}\in\mathscr{A}_{g}$ and $i,j\in\mathbb{Z}_{\ge0}$, let us introduce
\begin{equation}
\label{def:gamma}
\gamma_{i}^{[j]}(\bm{\alpha})
\coloneqq
\frac{2\log 2}{(g-1)g^{4}(\log g)^{2}}
\sum_{0\le m<n<g}
\bigl\|\bigl(g\alpha_{i}^{[j]}(m)-\alpha_{i+1}^{[j]}(m)\bigr)
-
\bigl(g\alpha_{i}^{[j]}(n)-\alpha_{i+1}^{[j]}(n)\bigr)\bigr\|^{2}.
\end{equation}
Also, write $\gamma_{i}(\bm{\alpha})\coloneqq\gamma_{i}^{[0]}(\bm{\alpha})$.
Since $\|\beta\|\le\frac{1}{2}$ for all $\beta\in\mathbb{R}$, we have
\begin{equation}
\label{gamma_bound}
\gamma_{i}^{[j]}(\bm{\alpha})
\in
\biggl[0,\frac{\log 2}{4g^{3}(\log g)^{2}}\biggr)
\subset
\biggl[0,\frac{1}{20}\biggr)
\end{equation}
since
\[
\gamma_{i}^{[j]}(\bm{\alpha})
\le
\frac{\log 2}{2(g-1)g^{4}(\log g)^{2}}
\sum_{0\le m<n<g}1
=
\frac{\log 2}{4g^{3}(\log g)^{2}}
\le
\frac{1}{32\log 2}
<
\frac{1}{20}.
\]
We defined $\gamma_{i}^{[j]}(\bm{\alpha})$ smaller than enough for the proof of the next lemma.
However, this is only for the simplification of the subsequent arguments.
\begin{lemma}
\label{lem:consecutive_phi}
For $g\in\mathbb{Z}_{\ge2}$, $i,j\in\mathbb{Z}_{\ge0}$ and $\beta\in\mathbb{R}$, we have
\begin{align}
\bigl(
\phi_{i}^{[j]}(\beta g^{i})
\phi_{i+1}^{[j]}(\alpha g^{i+1})
\bigr)^{\frac{1}{2}}
\le
g^{1-\gamma_{i}^{[j]}(\bm{\alpha})}.
\end{align}
\end{lemma}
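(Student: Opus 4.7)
The plan is to apply \cref{lem:phi_bound} once to $\phi_i^{[j]}(\beta g^i)$ and once to $\phi_{i+1}^{[j]}(\beta g^{i+1})$, separately for each pair $0\le m<n<g$, and then to combine the two resulting exponents in a way that eliminates the free variable $\beta$.

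Fix a pair $0\le m<n<g$ and set
\[
A_{m,n} \coloneqq \|\alpha_i^{[j]}(m) - \alpha_i^{[j]}(n) - \beta g^i(m-n)\|,
\quad
B_{m,n} \coloneqq \|\alpha_{i+1}^{[j]}(m) - \alpha_{i+1}^{[j]}(n) - \beta g^{i+1}(m-n)\|.
\]
The crucial observation is that $g$ times the argument of $A_{m,n}$ minus the argument of $B_{m,n}$ collapses, because the $\beta g^{i+1}(m-n)$ terms cancel, to
\[
C_{m,n} \coloneqq (g\alpha_i^{[j]}(m) - \alpha_{i+1}^{[j]}(m)) - (g\alpha_i^{[j]}(n) - \alpha_{i+1}^{[j]}(n)),
\]
which is independent of $\beta$ and is exactly the quantity measured inside $\gamma_i^{[j]}(\bm{\alpha})$. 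The triangle inequality for $\|\cdot\|$ combined with $\|gx\|\le g\|x\|$ (valid for positive integers $g$) then yields $\|C_{m,n}\|\le gA_{m,n}+B_{m,n}$, and AM--GM (via $(gA+B)^2\le 2g^2(A^2+B^2)$) gives the lower bound $A_{m,n}^2+B_{m,n}^2 \ge \|C_{m,n}\|^2/(2g^2)$.

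Next, I apply \cref{lem:phi_bound} for each of the $\binom{g}{2}$ pairs and raise the resulting inequalities to the $\binom{g}{2}$-th power, so that all pairs contribute to the exponent. Multiplying the bounds for $\phi_i^{[j]}$ and $\phi_{i+1}^{[j]}$ and taking a $(2\binom{g}{2})$-th root then gives
\[
\bigl(\phi_i^{[j]}(\beta g^i)\phi_{i+1}^{[j]}(\beta g^{i+1})\bigr)^{1/2}
\le g\exp\biggl(-\frac{4}{g^4(g-1)}\sum_{0\le m<n<g}\|C_{m,n}\|^2\biggr),
\]
after using $\binom{g}{2}=g(g-1)/2$ to simplify. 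Comparison with the target $g^{1-\gamma_i^{[j]}(\bm{\alpha})}$ reduces to the elementary inequality $4\log g \ge 2\log 2$, i.e.\ $g\ge\sqrt 2$, which holds for all $g\ge 2$.

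The essential step is the algebraic cancellation that removes $\beta$ from the combined exponent; the rest is a routine Cauchy--Schwarz estimate and a comparison of explicit constants. As the authors flag in the paragraph preceding the lemma, the constant in the definition of $\gamma_i^{[j]}(\bm{\alpha})$ has been deliberately chosen small, so the numerical comparison at the end comes with ample slack.
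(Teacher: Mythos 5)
Your proof is correct. It rests on the same key observation as the paper's proof --- the $\beta$-terms cancel in $g$ times the level-$i$ argument minus the level-$(i+1)$ argument, leaving exactly the quantity $C_{m,n}$ that enters $\gamma_{i}^{[j]}(\bm{\alpha})$, and \cref{lem:phi_bound} is the engine in both cases --- but you aggregate the pairwise information differently. The paper uses the triangle inequality in the form $\frac{1}{g+1}\|C_{m,n}\|\le\max(A_{m,n},B_{m,n})$, applies \cref{lem:phi_bound} only to the factor realizing that maximum (bounding the other trivially by $g$) for a single best pair, and then passes from the maximum over pairs to the average; you apply \cref{lem:phi_bound} to both factors and for every pair, multiply all the resulting inequalities and take a $2\binom{g}{2}$-th root (your phrase ``raise the resulting inequalities to the $\binom{g}{2}$-th power'' should really say ``multiply the inequalities over all pairs'', but the displayed consequence you state is the correct one), using $\|C_{m,n}\|\le gA_{m,n}+B_{m,n}$ and Cauchy--Schwarz in place of the max. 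Your route produces the exponent constant $\frac{4}{g^{4}(g-1)}$, slightly smaller than the paper's effective constant $\frac{16}{g^{2}(g-1)(g+1)^{2}}$ (the ratio is $\frac{(g+1)^{2}}{4g^{2}}$), but both comfortably exceed the threshold $\frac{2\log 2}{(g-1)g^{4}\log g}$ needed to dominate $\gamma_{i}^{[j]}(\bm{\alpha})\log g$, precisely because $\gamma_{i}^{[j]}$ was defined with ample slack; your closing reduction to $4\log g\ge 2\log 2$ is right. Neither route buys anything essential over the other: yours avoids the max-versus-mean step, the paper's uses a single application of \cref{lem:phi_bound} and retains a marginally better constant.
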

\begin{proof}
For any $m\neq n$, we have
\begin{align}
&g\bigl(\alpha_{i}^{[j]}(m)-\alpha_{i}^{[j]}(n)-\beta g^{i}(m-n)\bigr)
-
\bigl(\alpha_{i+1}^{[j]}(m)-\alpha_{i+1}^{[j]}(n)-\beta g^{i+1}(m-n)\bigr)\\
&=
\bigl(g\alpha_{i}^{[j]}(m)-\alpha_{i+1}^{[j]}(m)\bigr)
-
\bigl(g\alpha_{i}^{[j]}(n)-\alpha_{i+1}^{[j]}(n)\bigr).
\end{align}
Since the function $\mathbb{R}\to\mathbb{R}\semicolon x\mapsto\|x\|$ satisfies the triangle inequality, we thus get
\begin{align}
&\frac{1}{g+1}
\bigl\|\bigl(g\alpha_{i}^{[j]}(m)-\alpha_{i+1}^{[j]}(m)\bigr)
-
\bigl(g\alpha_{i}^{[j]}(n)-\alpha_{i+1}^{[j]}(n)\bigr)\bigr\|\\
&\le
\max\bigl(\|\alpha_{i}^{[j]}(m)-\alpha_{i}^{[j]}(n)-\beta g^{i}(m-n)\|,
\|\alpha_{i+1}^{[j]}(m)-\alpha_{i+1}^{[j]}(n)-\beta g^{i+1}(m-n)\|\bigr).
\end{align}
Thus, by \cref{lem:phi_bound}, we have
\[
\phi_{i}^{[j]}(\beta g^{i})
\phi_{i+1}^{[j]}(\alpha g^{i+1})
\le
g^{2}
\exp\biggl(
-
\frac{8}{g(g+1)^{2}}
\bigl\|\bigl(g\alpha_{i}^{[j]}(m)-\alpha_{i+1}^{[j]}(m)\bigr)
-
\bigl(g\alpha_{i}^{[j]}(n)-\alpha_{i+1}^{[j]}(n)\bigr)\bigr\|^{2}
\biggr).
\]
By using the inequality
\begin{align}
&\frac{8}{g(g+1)^{2}}
\max_{0\le m<n<g}
\bigl\|\bigl(g\alpha_{i}^{[j]}(m)-\alpha_{i+1}^{[j]}(m)\bigr)
-
\bigl(g\alpha_{i}^{[j]}(n)-\alpha_{i+1}^{[j]}(n)\bigr)\bigr\|^{2}\\
&\ge
\frac{16}{(g-1)g^{2}(g+1)^{2}}
\sum_{0\le m<n<g}
\bigl\|\bigl(g\alpha_{i}^{[j]}(m)-\alpha_{i+1}^{[j]}(m)\bigr)
-
\bigl(g\alpha_{i}^{[j]}(n)-\alpha_{i+1}^{[j]}(n)\bigr)\bigr\|^{2}
\end{align}
and
\[
\frac{16}{(g-1)g^{2}(g+1)^{2}}
\ge
\frac{2\log 2}{(g-1)g^{4}(\log g)^{2}},
\]
we obtain the result.
\end{proof}

For $g\in\mathbb{Z}_{\ge2}$, $\bm{\alpha}\in\mathscr{A}_{g}$, $\lambda,j\in\mathbb{Z}_{\ge0}$, we now introduce
\begin{equation}
\label{def:sigma_j_alpha}
\sigma_{\lambda}^{[j]}(\bm{\alpha})
\coloneqq
\sum_{0\le i<\lambda}
\gamma_{i}^{[j]}(\bm{\alpha})
\quad\text{and}\quad
\sigma_{\lambda}(\bm{\alpha})
\coloneqq
\sigma_{\lambda}^{[0]}(\bm{\alpha}).
\end{equation}
Note that \cref{gamma_bound} gives
\begin{equation}
\label{sigma_bound}
\sigma_{\lambda}^{[j]}(\bm{\alpha})
\in
\biggl[0,\frac{\log 2}{4g^{3}(\log g)^{2}}\lambda\biggr]
\subset
\biggl[0,\frac{1}{4g^{3}\log g}\lambda\biggr]
\subset
\biggl[0,\frac{\lambda}{20}\biggr]
\end{equation}
and we trivially have $\sigma_{0}^{[j]}(\bm{\alpha})=0$.
Also, $\sigma_{\lambda}^{[j]}(\bm{\alpha})$ is increasing with respect to $\lambda\in\mathbb{Z}_{\ge0}$.
\begin{lemma}[$L^{\infty}$-bound]
\label{lem:L_infty_general}
For $g\in\mathbb{Z}_{\ge2}$, $\bm{\alpha}\in\mathscr{A}_{g}$,
$\lambda,j\in\mathbb{Z}_{\ge0}$ and $\beta\in\mathbb{R}$, we have
\[
|F_{\lambda}^{[j]}(\beta)|
\ll
g^{-\sigma_{\lambda}^{[j]}(\bm{\alpha})},
\]
where the implicit constant depends only on $g$.
\end{lemma}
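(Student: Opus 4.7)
The plan is to combine the product formula of \cref{lem:F_product_formula} with the pairwise bound of \cref{lem:consecutive_phi}, applied to overlapping pairs so that every index $i\in\{0,\ldots,\lambda-1\}$ contributes a $\gamma_i^{[j]}(\bm{\alpha})$ saving.

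Concretely, by \cref{lem:F_product_formula} the task reduces to bounding $\prod_{0\le i<\lambda}\phi_i^{[j]}(\beta g^i)$. The cases $\lambda\in\{0,1\}$ are immediate from the trivial bound $\phi_i^{[j]}\le g$ combined with the $O_g(1)$ bound on $\sigma_1^{[j]}(\bm{\alpha})$ coming from \cref{sigma_bound}, so I assume $\lambda\ge 2$. I would then use the elementary identity
\[
\prod_{0\le i<\lambda}\phi_i^{[j]}(\beta g^i)
=
\bigl(\phi_0^{[j]}(\beta)\,\phi_{\lambda-1}^{[j]}(\beta g^{\lambda-1})\bigr)^{1/2}
\prod_{0\le i<\lambda-1}\bigl(\phi_i^{[j]}(\beta g^i)\,\phi_{i+1}^{[j]}(\beta g^{i+1})\bigr)^{1/2},
\]
which follows from the fact that each interior $\phi_i^{[j]}$ appears as a half-factor in two adjacent pairs, while each endpoint appears as a half-factor in one pair plus a half-factor from the boundary term.

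Applying the trivial bound $\phi_i^{[j]}\le g$ to the boundary factor and \cref{lem:consecutive_phi} to each of the $\lambda-1$ consecutive pairs yields
\[
\prod_{0\le i<\lambda}\phi_i^{[j]}(\beta g^i)
\le
g\cdot\prod_{0\le i\le\lambda-2}g^{1-\gamma_i^{[j]}(\bm{\alpha})}
=
g^{\lambda}\cdot g^{-\sigma_{\lambda-1}^{[j]}(\bm{\alpha})}.
\]
Dividing by $g^{\lambda}$ gives $|F_\lambda^{[j]}(\beta)|\le g^{-\sigma_{\lambda-1}^{[j]}(\bm{\alpha})}$, which differs from the claimed bound by the single factor $g^{\gamma_{\lambda-1}^{[j]}(\bm{\alpha})}$. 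By \cref{gamma_bound} this is at most $g^{1/20}$, a quantity depending only on $g$, and is absorbed into the implicit constant.

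There is no real obstacle in this argument. The only mildly subtle point is to pair the factors in the overlapping way $(i,i+1)$ rather than disjointly, so that every $\gamma_i^{[j]}(\bm{\alpha})$ with $i<\lambda-1$ appears once in the exponent. The lone endpoint $\gamma_{\lambda-1}^{[j]}(\bm{\alpha})$ is lost, but since each $\gamma$ is uniformly $O_g(1)$, this loss disappears into the implicit constant; nothing about the proof is sensitive to the specific numerical constant in \cref{def:gamma}.
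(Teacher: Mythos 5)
Your proposal is correct and follows essentially the same route as the paper: the product formula of \cref{lem:F_product_formula}, the overlapping pairing $(i,i+1)$ handled by \cref{lem:consecutive_phi}, trivial bounds on the boundary factors, and absorption of the lost $\gamma_{\lambda-1}^{[j]}(\bm{\alpha})\le\frac{1}{20}$ into the $g$-dependent implicit constant via \cref{gamma_bound}. The only cosmetic difference is that you isolate the boundary product $(\phi_0^{[j]}\phi_{\lambda-1}^{[j]})^{1/2}$ explicitly before applying the trivial bound, whereas the paper applies $\phi_i^{[j]}\le g$ to the two stray half-factors directly.
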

\begin{proof}
We may assume $\lambda\ge1$.
By \cref{lem:F_product_formula} with a trivial bound $\phi_{i}^{[j]}(\beta)\le g$, we have
\begin{align}
|F_{\lambda}^{[j]}(\beta)|
&\le
\frac{1}{g^{\lambda-1}}
\prod_{0\le i<\lambda-1}
\phi_{i}^{[j]}(\beta g^{i})^{\frac{1}{2}}
\prod_{1\le i<\lambda}
\phi_{i}^{[j]}(\beta g^{i})^{\frac{1}{2}}
=
\frac{1}{g^{\lambda-1}}
\prod_{0\le i<\lambda-1}
\bigl(
\phi_{i}^{[j]}(\beta g^{i})
\phi_{i+1}^{[j]}(\beta g^{i+1})
\bigr)^{\frac{1}{2}}.
\end{align}
Then, by \cref{lem:consecutive_phi} and \cref{gamma_bound}, we have
\begin{align}
\sum_{0\le i<\lambda-1}
\gamma_{i}^{[j]}(\bm{\alpha})
=
\sum_{0\le i<\lambda}
\gamma_{i}^{[j]}(\bm{\alpha})
-
\gamma_{\lambda-1}^{[j]}(\bm{\alpha})
>
\sigma_{\lambda}^{[j]}(\bm{\alpha})
-
\frac{1}{20}
\end{align}
and so we obtain the assertion.
\end{proof}

\subsection{The \texorpdfstring{$L^{1}$}{L1}-bound}
\label{subsec:L1_bound}
We next bound the discrete $L^{1}$-moment with congruence condition
\[
\sum_{\substack{
0\le h<g^{\lambda}\\
h\equiv a\ \mod{kg^{\delta}}
}}
\biggl|F_{\lambda}\biggl(\frac{h}{g^{\lambda}}\biggr)\biggr|,
\]
where $kg^{\delta}\mid g^{\lambda}$ and $g\nmid k$.
To this end, for $i\in\mathbb{Z}_{\ge0}$, $R\mid g$, $S\mid g$ and $t\in\mathbb{R}$, we introduce
\begin{equation}
\label{def:Psi}
\Psi_{i}(t,R,S)
\coloneqq
\frac{1}{g^{2}}
\sum_{0\le r<R}
\phi_{i+1}\biggl(\frac{g(t+r)}{RS}\biggr)
\sum_{0\le s<S}
\phi_{i}\biggl(\frac{t+r}{RS}+\frac{s}{S}\biggr)
\end{equation}
and prepare an estimate for $\Psi_{i}(t,R,S)$.

We first prove some basic lemmas on the moments of $\phi_{i}$.
\begin{lemma}
\label{lem:L2_orthogonality}
For $g\in\mathbb{Z}_{\ge2}$, $\bm{\alpha}\in\mathscr{A}_{g}$,
$i\in\mathbb{Z}_{\ge0}$, $R\in\mathbb{N}$, $a\in\mathbb{Z}$ and $\beta\in\mathbb{R}$
with $R\mid g$ and $(R,a)=1$,
\[
\max_{\beta\in\mathbb{R}}
\sum_{0\le r<R}
\phi_{i}\biggl(\beta+\frac{ar}{R}\biggr)^{2}
\le
g^{2}.
\]
\end{lemma}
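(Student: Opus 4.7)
The plan is to expand the square $\phi_{i}(\beta + ar/R)^{2}$ using the definition of $\phi_{i}$ as an exponential sum of length $g$, interchange the $r$-sum with the resulting double sum over digits, and exploit the orthogonality of additive characters modulo $R$. Writing out the definition gives
\[
\phi_{i}\!\left(\beta + \frac{ar}{R}\right)^{2}
=
\sum_{0 \le n, n' < g}
e\bigl(\alpha_{i}(n) - \alpha_{i}(n')\bigr)\,
e\!\left(-\left(\beta + \frac{ar}{R}\right)(n - n')\right),
\]
after which summing over $r$ reduces the inner character sum $\sum_{0\le r<R} e(-ar(n - n')/R)$ to $R \cdot \mathbbm{1}_{R \mid (n - n')}$, using $(a, R) = 1$ so that $r \mapsto ar$ is a bijection on $\mathbb{Z}/R\mathbb{Z}$.

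After this orthogonality step, the full sum rearranges cleanly as
\[
\sum_{0 \le r < R} \phi_{i}\!\left(\beta + \frac{ar}{R}\right)^{2}
=
R \sum_{c = 0}^{R - 1}
\biggl|
\sum_{\substack{0 \le n < g \\ n \equiv c \mod{R}}}
e(\alpha_{i}(n) - \beta n)
\biggr|^{2}.
\]
Since $R \mid g$, each residue class $c$ modulo $R$ contributes exactly $g/R$ values $n \in [0, g)$, so by the triangle inequality the inner sum has modulus at most $g/R$. Summing over the $R$ classes gives the bound $R \cdot R \cdot (g/R)^{2} = g^{2}$, which is uniform in $\beta$ as stated.

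The main obstacle is essentially absent: this is a routine orthogonality-plus-triangle-inequality argument. The only point requiring attention is the use of $R \mid g$, which ensures that residue classes modulo $R$ are equidistributed in $[0, g)$; without this, the inner sums would contain either $\lceil g/R \rceil$ or $\lfloor g/R \rfloor$ terms and the clean bound $g^{2}$ would not drop out. Note that the bound is sharp, attained for instance by $\bm{\alpha} \equiv 0$, $R = g$, $a = 1$ and $\beta = 0$, where only the $r = 0$ term contributes with $\phi_{i}(0) = g$.
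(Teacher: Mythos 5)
Your proposal is correct and follows essentially the same route as the paper: expand the square, use orthogonality of $r\mapsto e(-ar(n-n')/R)$ with $(a,R)=1$ to reduce to pairs $n\equiv n'\ \mod{R}$, and count using $R\mid g$. Your extra step of regrouping into $R\sum_{c}\bigl|\sum_{n\equiv c\ \mod{R}}e(\alpha_{i}(n)-\beta n)\bigr|^{2}$ before applying the triangle inequality is just a slightly different packaging of the paper's direct bound $R\sum_{m\equiv n\ \mod{R}}1=g^{2}$.
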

\begin{proof}
By expanding the square and using the orthogonality with $(R,a)=1$, we have
\begin{align}
\sum_{0\le r<R}
\phi_{i}\biggl(\beta+\frac{ar}{R}\biggr)^{2}
&=
\sum_{0\le m,n<g}
e(\alpha_{i}(m)-\alpha_{i}(n)-\beta(m-n))
\sum_{0\le r<R}
e\biggl(-\frac{ar}{R}(m-n)\biggr)\\
&\le
R
\sum_{\substack{
0\le m,n<g\\
m\equiv n\ \mod{R}
}}
1
=
g^{2}.
\end{align}
This completes the proof.
\end{proof}

\begin{lemma}
\label{lem:L4_moment}
For $g\in\mathbb{Z}_{\ge2}$, $\bm{\alpha}\in\mathscr{A}_{g}$,
$i\in\mathbb{Z}_{\ge0}$, $U\in\mathbb{N}$ and $\beta\in\mathbb{R}$ with $U\ge 2g-1$, we have
\[
\sum_{0\le u<U}
\phi_{i}\biggl(\beta+\frac{u}{U}\biggr)^{4}
=
U\sum_{|h|<g}
\biggl|
\sum_{0\le n,n+h<g}e(\alpha_{i}(n+h)-\alpha_{i}(n))
\biggr|^{2}.
\]
\end{lemma}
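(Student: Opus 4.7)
The plan is to expand $\phi_{i}(\beta)^{4}$ into a trigonometric polynomial in $\beta$ with spectrum of bounded width, and then kill off the off-diagonal contributions by using additive orthogonality in $u$ together with the spacing hypothesis $U\ge 2g-1$.

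First I would write $\phi_{i}(\beta)^{2}$ as a double sum. Since $\phi_{i}(\beta)$ is by definition the absolute value of an exponential sum of length $g$, squaring yields
\[
\phi_{i}(\beta)^{2}
=
\sum_{0\le m,n<g}e(\alpha_{i}(m)-\alpha_{i}(n)-\beta(m-n)).
\]
The substitution $h=m-n$ then gives the Fourier expansion
\[
\phi_{i}(\beta)^{2}
=
\sum_{|h|<g}T_{h}\,e(-\beta h),
\qquad
T_{h}\coloneqq\sum_{0\le n,\,n+h<g}e(\alpha_{i}(n+h)-\alpha_{i}(n)),
\]
so that $\phi_{i}(\beta)^{2}$ is a trigonometric polynomial supported on frequencies in $(-g,g)$.

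Next, because $\phi_{i}(\beta)^{2}$ is real and nonnegative, we have $\phi_{i}(\beta)^{4}=|\phi_{i}(\beta)^{2}|^{2}$. Squaring the above expansion,
\[
\phi_{i}(\beta)^{4}
=
\sum_{|h_{1}|,|h_{2}|<g}T_{h_{1}}\overline{T_{h_{2}}}\,e(-\beta(h_{1}-h_{2})).
\]
Substituting $\beta\mapsto\beta+u/U$ and summing over $0\le u<U$, the inner geometric sum
\[
\sum_{0\le u<U}e\!\left(-\frac{u(h_{1}-h_{2})}{U}\right)
\]
equals $U$ if $U\mid h_{1}-h_{2}$ and $0$ otherwise.

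Finally, this is where the hypothesis enters: since $|h_{1}|,|h_{2}|\le g-1$, we have $|h_{1}-h_{2}|\le 2g-2<U$, so the only multiple of $U$ in the relevant range is $0$, forcing $h_{1}=h_{2}$. The surviving diagonal yields exactly $U\sum_{|h|<g}|T_{h}|^{2}$, which is the claimed right-hand side (noting that the terms $e(-\beta\cdot 0)=1$ cause $\beta$ to drop out entirely, as it must since the asserted identity is independent of $\beta$). There is no real obstacle here; the argument is a clean Parseval-type identity, and the only subtle point is the sharp spacing condition $U\ge 2g-1$ needed to separate the diagonal from all other residue classes mod $U$.
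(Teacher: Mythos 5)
Your proposal is correct and follows essentially the same route as the paper: expand $\phi_{i}^{2}$ as a trigonometric polynomial in the frequencies $h=m-n$ with $|h|<g$, square, apply orthogonality of $e(-u(h_{1}-h_{2})/U)$ over $0\le u<U$, and use $U\ge 2g-1$ to force $h_{1}=h_{2}$, leaving the diagonal $U\sum_{|h|<g}|T_{h}|^{2}$. The paper's proof is the same Parseval-type computation with $c(h)$ in place of your $T_{h}$.
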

\begin{proof}
By expanding the square, we have
\[
\phi_{i}\biggl(\beta+\frac{u}{U}\biggr)^{2}
=
\sum_{0\le m,n<g}
e\biggl(\alpha_{i}(m)-\alpha_{i}(n)-\biggl(\beta+\frac{u}{U}\biggr)(m-n)\biggr).
\]
By writing $h=m-n$, we can rewrite the right-hand side as
\[
\phi_{i}\biggl(\beta+\frac{u}{U}\biggr)^{2}
=
\sum_{|h|<g}
c(h)
e\biggl(-\biggl(\beta+\frac{u}{U}\biggr)h\biggr)
\quad\text{with}\quad
c(h)
\coloneqq
\sum_{\substack{
0\le m,n<g\\
h=m-n
}}
e(\alpha_{i}(m)-\alpha_{i}(n)).
\]
By the orthogonality, we thus have
\begin{equation}
\label{lem:L4_moment:prefinal}
\begin{aligned}
\sum_{0\le u<U}
\phi_{i}\biggl(\beta+\frac{u}{U}\biggr)^{4}
&=
\sum_{|h_{1}|,|h_{2}|<g}
c(h_{1})\overline{c(h_{2})}
e(-\beta(h_{1}-h_{2}))
\sum_{0\le u<U}
e\biggl(-\frac{u}{U}(h_{1}-h_{2})\biggr)\\
&=
U\sum_{\substack{
|h_{1}|,|h_{2}|<g\\
h_{1}\equiv h_{2}\ \mod{U}
}}
c(h_{1})\overline{c(h_{2})}
e(-\beta(h_{1}-h_{2})).
\end{aligned}
\end{equation}
By the assumption $U\ge 2g-1$, we have
\[
|h_{1}|,|h_{2}|<g\ \text{and}\ 
h_{1}\equiv h_{2}\ \mod{U}
\implies
h_{1}=h_{2}.
\]
By using this fact in \cref{lem:L4_moment:prefinal} and noting that
\[
c(h)
=
\sum_{0\le n,n+h<g}
e(\alpha_{i}(n+h)-\alpha_{i}(n)),
\]
we obtain the assertion.
\end{proof}

We bound $\Psi_{i}(t,R,S)$ by considering two cases separately according to if $S\neq g$ or not.
\begin{lemma}
\label{lem:Psi_bound_S_neq_g}
For $i\in\mathbb{Z}_{\ge0}$, $t\in\mathbb{R}$, $g,R,S\in\mathbb{N}$
with $g\ge2$, $R\mid g$, $S\mid g$ and $(R,g/S)=1$, we have
\[
\Psi_{i}(t,R,S)^{2}
\le
\frac{1}{g/S}\biggl\lceil\frac{g}{RS}\biggr\rceil
\cdot
RS.
\]
In particular, if $R\ge2$ and $S\neq g$, we have
\[
\Psi_{i}(t,R,S)^{2}
\le
\frac{2}{3}RS.
\]
\end{lemma}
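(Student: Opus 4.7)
The plan is to apply Cauchy--Schwarz twice: once to decouple the outer sum over $r$ from the inner sum over $s$, and once to linearize the inner sum into an $L^2$ moment. The two resulting $L^2$ moments are then estimated by \cref{lem:L2_orthogonality} (for the sum involving $\phi_{i+1}$) and by a direct expansion-and-orthogonality argument on $\phi_i^2$.

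Writing $A_r \coloneqq \phi_{i+1}(g(t+r)/(RS))$ and $B_r \coloneqq \sum_{0 \le s < S}\phi_i((t+r)/(RS) + s/S)$, Cauchy--Schwarz in $r$ gives
\[
\Psi_i(t,R,S)^2 \le \frac{1}{g^4}\biggl(\sum_{0 \le r < R} A_r^2\biggr)\biggl(\sum_{0 \le r < R} B_r^2\biggr).
\]
For $\sum_r A_r^2$, I would use $S \mid g$ to rewrite the argument of $\phi_{i+1}$ as $(g/S)t/R + (g/S)r/R$ and then invoke \cref{lem:L2_orthogonality} with $a = g/S$, which is coprime to $R$ by hypothesis; this yields $\sum_r A_r^2 \le g^2$.

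For $\sum_r B_r^2$, Cauchy--Schwarz in $s$ gives $B_r^2 \le S\sum_s \phi_i((t+r)/(RS) + s/S)^2$, and the bijection $(r,s) \mapsto r+Rs$ from $[0,R)\times[0,S)$ onto $[0,RS)$ rewrites $\sum_r B_r^2$ as $S\sum_{0 \le u < RS}\phi_i((t+u)/(RS))^2$. Expanding $\phi_i^2$ as a double sum over $(m,n) \in [0,g)^2$ and summing the geometric series in $u$ collapses the $u$-sum to $RS$ times the exponential sum over diagonal pairs $m \equiv n \pmod{RS}$; bounding the number of such pairs by $g\lceil g/(RS)\rceil$ (for each $n$ there are at most $\lceil g/(RS)\rceil$ admissible $m$) yields $\sum_r B_r^2 \le g R S^2 \lceil g/(RS)\rceil$. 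Combining the two factors,
\[
\Psi_i(t,R,S)^2 \le \frac{g^2 \cdot g R S^2\lceil g/(RS)\rceil}{g^4} = \frac{RS}{g/S}\biggl\lceil\frac{g}{RS}\biggr\rceil,
\]
which is the first asserted bound.

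For the second bound, setting $m \coloneqq g/S \in \mathbb{Z}_{\ge 2}$ (by $S \mid g$, $S \ne g$), the task reduces to verifying $\lceil m/R\rceil \le (2/3)\,m$ for $R \ge 2$. Since $\lceil m/R\rceil \le \lceil m/2\rceil$, it suffices to check $R = 2$: directly for $m = 2, 3$, and for $m \ge 4$ either by $m/2 \le 2m/3$ (even $m$) or by $(m+1)/2 \le 2m/3 \iff m \ge 3$ (odd $m$). I expect the main technical nuisance to be keeping the combinatorial bookkeeping correct across the two Cauchy--Schwarz steps, and in particular ensuring that the change of variables $(r,s) \leftrightarrow u$ and the diagonal count $\#\{(m,n) : m \equiv n \pmod{RS}\} \le g\lceil g/(RS)\rceil$ together produce exactly the claimed prefactor $1/(g/S)$ rather than some weaker $1$ or $g/(RS)$.
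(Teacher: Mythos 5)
Your proposal is correct and follows essentially the same route as the paper's proof: the same two applications of Cauchy--Schwarz, the same appeal to the $L^{2}$-orthogonality lemma with $(R,g/S)=1$, and the same diagonal count giving the prefactor $\frac{RS^{2}}{g}\lceil\frac{g}{RS}\rceil$ (you merely merge the two orthogonality steps into one modulus $RS$ via $u=r+Rs$, where the paper does it first mod $S$ and then mod $R$). A minor bonus of your version: by checking $\frac{1}{m}\lceil m/R\rceil\le\frac{2}{3}$ directly with $m=g/S$, you never need the coprimality condition to exclude $R=g/S=2$, which the paper must invoke because it first relaxes $\lceil\frac{g}{RS}\rceil\le\frac{g}{RS}+1-\frac{1}{R}$.
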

\begin{proof}
By the Cauchy--Schwarz inequality, we have
\begin{equation}
\label{lem:Psi_bound_S_neq_g:first_CS}
\Psi_{i}(t,R,S)^{2}
\le
\frac{1}{g^{4}}
\sum_{0\le r<R}
\phi_{i+1}\biggl(\frac{(g/S)t+(g/S)r}{R}\biggr)^{2}
\sum_{0\le r<R}
\biggl(
\sum_{0\le s<S}
\phi_{i}\biggl(\frac{t+r}{RS}+\frac{s}{S}\biggr)
\biggr)^{2}.
\end{equation}
By \cref{lem:L2_orthogonality} with the assumption $(R,g/S)=1$, we have
\[
\Psi_{i}(t,R,S)^{2}
\le
\frac{1}{g^{2}}
\sum_{0\le r<R}
\biggl(
\sum_{0\le s<S}
\phi_{i}\biggl(\frac{t+r}{RS}+\frac{s}{S}\biggr)
\biggr)^{2}.
\]
By applying the Cauchy--Schwarz inequality once more, we get
\begin{equation}
\label{lem:Psi_bound_S_neq_g:second_CS}
\Psi_{i}(t,R,S)^{2}
\le
\frac{S}{g^{2}}
\sum_{0\le r<R}
\sum_{0\le s<S}
\phi_{i}\biggl(\frac{t+r}{RS}+\frac{s}{S}\biggr)^{2}
\end{equation}
By expanding the square and using the orthogonality, we have
\begin{align}
\sum_{0\le s<S}
\phi_{i}\biggl(\frac{t+r}{RS}+\frac{s}{S}\biggr)^{2}
&=
\sum_{0\le m,n<g}
e\biggl(\alpha_{i}(m)-\alpha_{i}(n)-\frac{t+r}{RS}(m-n)\biggr)
\sum_{0\le s<S}
e\biggl(-\frac{s}{S}(m-n)\biggr)\\
&=
S
\sum_{\substack{
0\le m,n<g\\
m\equiv n\ \mod{S}
}}
e\biggl(\alpha_{i}(m)-\alpha_{i}(n)-\frac{t+r}{RS}(m-n)\biggr).
\end{align}
By \cref{lem:Psi_bound_S_neq_g:second_CS}, we thus have
\begin{align}
\Psi_{i}(t,R,S)^{2}
\le
\frac{S^{2}}{g^{2}}
\sum_{0\le r<R}
\sum_{\substack{
0\le m,n<g\\
m\equiv n\ \mod{S}
}}
e\biggl(\alpha_{i}(m)-\alpha_{i}(n)-\frac{t+r}{RS}(m-n)\biggr).
\end{align}
By swapping the summation, we obtain
\begin{align}
\Psi(t,R,S)^{2}
\le
\frac{S^{2}}{g^{2}}
\sum_{\substack{
0\le m,n<g\\
m\equiv n\ \mod{S}
}}
\biggl|
\sum_{0\le r<R}
e\biggl(-\frac{r}{RS}(m-n)\biggr)
\biggr|.
\end{align}
By writing $m=a+\mu S$ and $n=a+\nu S$, we get
\begin{align}
\Psi_{i}(t,R,S)^{2}
&\le
\frac{S^{2}}{g^{2}}
\sum_{0\le a<S}
\sum_{0\le\mu,\nu<\frac{g}{S}}
\biggl|
\sum_{0\le r<R}
e\biggl(-\frac{r}{RS}((a+\mu S)-(a+\nu S))\biggr)
\biggr|\\
&=
\frac{S^{3}}{g^{2}}
\sum_{0\le\mu,\nu<\frac{g}{S}}
\biggl|
\sum_{0\le r<R}
e\biggl(-\frac{r}{R}(\mu-\nu)\biggr)
\biggr|
=
\frac{RS^{3}}{g^{2}}
\sum_{\substack{
0\le \mu,\nu<\frac{g}{S}\\
\mu\equiv\nu\ \mod{R}
}}
1.
\end{align}
In the last sum,
we can write $\ell R=\mu-\nu$
by the condition $\mu\equiv\nu\ \mod{R}$
to obtain
\begin{align}
\sum_{\substack{
0\le \mu,\nu<\frac{g}{S}\\
\mu\equiv\nu\ \mod{R}
}}
1
=
\sum_{|\ell|<\frac{g}{RS}}
\sum_{\substack{
0\le \mu,\nu<\frac{g}{S}\\
\mu-\nu=\ell R
}}
1
=
\frac{g}{S}
\sum_{|\ell|<\frac{g}{RS}}
\biggl(1-\frac{|\ell|}{(\frac{g}{RS})}\biggr)
\end{align}
We thus have
\begin{equation}
\label{lem:Psi_bound_S_neq_g:prefinal}
\Psi_{i}(t,R,S)^{2}
\le
\frac{RS^{2}}{g}
\sum_{|\ell|<\frac{g}{RS}}
\biggl(1-\frac{|\ell|}{(\frac{g}{RS})}\biggr).
\end{equation}
By noting that $\lceil\frac{g}{RS}\rceil\ge\frac{g}{RS}>0$ and so $\lceil\frac{g}{RS}\rceil\ge1$,
we can estimate the last sum as
\begin{align}
\sum_{|\ell|<\frac{g}{RS}}
\biggl(1-\frac{|\ell|}{(\frac{g}{RS})}\biggr)
&=
1+2\sum_{1\le\ell\le\lceil\frac{g}{RS}\rceil-1}
\biggl(1-\frac{\ell}{(\frac{g}{RS})}\biggr)\\
&=
1+2\biggl(\biggl\lceil\frac{g}{RS}\biggr\rceil-1\biggr)
-
\frac{1}{\frac{g}{RS}}
\biggl\lceil\frac{g}{RS}\biggr\rceil
\biggl(\biggl\lceil\frac{g}{RS}\biggr\rceil-1\biggr)\\
&\le
1+\biggl(\biggl\lceil\frac{g}{RS}\biggr\rceil-1\biggr)
=
\biggl\lceil\frac{g}{RS}\biggr\rceil.
\end{align}
On inserting this formula into \cref{lem:Psi_bound_S_neq_g:prefinal},
we obtain the first assertion.

For the latter assertion,
by the assumption $S\mid g$,
we have
\[
\frac{1}{g/S}\biggl\lceil\frac{g}{RS}\biggr\rceil
\le
\frac{1}{g/S}\biggl(\frac{g}{RS}+1-\frac{1}{R}\biggr).
\]
By assuming $R\ge 2$ and $S\neq g$, we have $R,g/S\ge2$ since $S\mid g$.
Also, since $(R,g/S)=1$, we do not have $R=g/S=2$.
If $R\ge3$ and $g/S\ge2$, then we have
\[
\frac{1}{g/S}\biggl\lceil\frac{g}{RS}\biggr\rceil
\le
\frac{1}{R}
+
\frac{1}{g/S}\biggl(1-\frac{1}{R}\biggr)
\le
\frac{1}{R}
+
\frac{1}{2}\biggl(1-\frac{1}{R}\biggr)
=
\frac{1}{2R}+\frac{1}{2}
\le
\frac{2}{3}.
\]
Otherwise we have $R=2$ and $g/S\ge3$ and so
\[
\frac{1}{g/S}\biggl\lceil\frac{g}{RS}\biggr\rceil
\le
\frac{1}{R}
+
\frac{1}{g/S}\biggl(1-\frac{1}{R}\biggr)
\le
\frac{1}{R}
+
\frac{1}{3}\biggl(1-\frac{1}{R}\biggr)
=
\frac{2}{3R}+\frac{1}{3}
=
\frac{2}{3}.
\]
On inserting these estimates into the first assertion, we obtain the lemma.
\end{proof}

\begin{lemma}
\label{lem:Psi_bound_S_eq_g}
For $i\in\mathbb{Z}_{\ge0}$, $t\in\mathbb{R}$, $g,R\in\mathbb{N}$ with $R\mid g$ and $R\ge2$, we have
\[
\Psi_{i}(t,R,g)^{2}
\le
Rg(1-\Theta_{i}(\bm{\alpha})),
\]
where
\[
\Theta_{i}(\bm{\alpha})
\coloneqq
\biggl(1-\frac{1}{g}\biggr)
\biggl\{
1-\biggl(1
-
\frac{2}{g^{2}(g-1)}\sum_{1\le h<g}
\biggl|
\sum_{0\le n,n+h<g}
e(\alpha_{i}(n+h)-\alpha_{i}(n))
\biggr|^{2}\biggr)^{\frac{1}{2}}
\biggr\}.
\]
Also, for $\Theta_{i}(\bm{\alpha})$, we have
\begin{equation}
\label{lem:Psi_bound_S_eq_g:def:Theta}
\Theta_{g}
\coloneqq
\min_{\substack{
\bm{\alpha}\in\mathscr{A}_{g}\\
i\in\mathbb{Z}_{\ge0}
}}
\Theta_{i}(\bm{\alpha})
\ge
\biggl(1-\frac{1}{g}\biggr)
\biggl\{
1-\biggl(1
-
\frac{2}{g^{2}(g-1)}\biggr)^{\frac{1}{2}}
\biggr\}
>
\frac{1}{g^{3}}
>0.
\end{equation}
\end{lemma}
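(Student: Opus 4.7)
I would apply Cauchy--Schwarz to $\Psi_i(t, R, g)^2$ as in the proof of \cref{lem:Psi_bound_S_neq_g} (with $S = g$), yielding
\[
\Psi_i(t,R,g)^2 \le \frac{1}{g^4}\biggl(\sum_{r<R}\phi_{i+1}\bigl((t+r)/R\bigr)^{2}\biggr)\biggl(\sum_{r<R}T_r^{2}\biggr),
\quad T_r \coloneqq \sum_{s<g}\phi_i\bigl((t+r)/(Rg) + s/g\bigr).
\]
Bounding the first factor by $g^2$ via \cref{lem:L2_orthogonality} reduces the goal to
$\sum_{r<R}T_r^2 \le Rg^3(1 - \Theta_i(\bm{\alpha}))$.
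Here the hypothesis $R \ge 2$ is essential: it guarantees $Rg \ge 2g-1$, which allows the $L^4$-moment identity of \cref{lem:L4_moment} to be applied to $\phi_i$ on the full period.

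The key structural observation is that $T(\beta) \coloneqq \sum_{s<g}\phi_i(\beta + s/g)$ has period $1/g$. Setting $w_v \coloneqq (t+v)/(Rg)$ for $v\in[0,Rg)$, each $T(w_v)$ equals some $T_r$ with multiplicity $g$, so $\sum_{v=0}^{Rg-1}T(w_v)^2 = g\sum_{r<R}T_r^2$. Expanding $T^2$ as a double sum in $s,s'$ and using the shift-invariance over the full period of the cyclic correlation $C_h \coloneqq \sum_{v=0}^{Rg-1}\phi_i(w_v)\phi_i(w_v + h/g)$ (with $C_{-h} = C_h$ and $C_0 = Rg^2$ by \cref{lem:L2_orthogonality}) yields the identity
\[
\sum_{r<R}T_r^2 = Rg^2 + \frac{2}{g}\sum_{h=1}^{g-1}(g-h)\,C_h.
\]
I then apply the weighted Cauchy--Schwarz $(\sum(g-h)C_h)^2 \le (\sum(g-h))(\sum(g-h)C_h^2)$ together with $C_h^2 \le Rg \sum_v \phi_i^2(w_v)\phi_i^2(w_v + h/g)$ (Cauchy--Schwarz again), and the Fourier-expansion identity
\[
\sum_{v=0}^{Rg-1}\phi_i^2(w_v)\phi_i^2(w_v + h/g) = Rg\biggl[g^2 + 2\sum_{k=1}^{g-1}|c_k|^2\cos(2\pi hk/g)\biggr]
\]
(valid because $Rg \ge 2g - 1$, in the spirit of \cref{lem:L4_moment}). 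Combining these with the elementary identity $\sum_{h=1}^{g-1}(g-h)\cos(2\pi hk/g) = -g/2$ for $k\in\{1,\ldots,g-1\}$ (derived from $\sum_{j=0}^{g-1}j\omega^j = -g/(1-\omega)$ when $\omega^g=1$, $\omega\ne1$) gives $\sum_{r<R}T_r^2 \le Rg^2[1 + (g-1)\sqrt{1-\delta}] = Rg^3(1-\Theta_i(\bm{\alpha}))$ with $\delta = 2\sum_{k=1}^{g-1}|c_k|^2/(g^2(g-1))$, which is the first assertion.

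For the second assertion, note that $|c_{g-1}| = |e(\alpha_i(g-1)-\alpha_i(0))| = 1$ forces $\sum_{k=1}^{g-1}|c_k|^2 \ge 1$; monotonicity of $x \mapsto 1-\sqrt{1-x}$ then yields the stated explicit lower bound on $\Theta_g$, and the strict inequality $\Theta_g > 1/g^3$ follows from the elementary $1-\sqrt{1-x} > x/2$ for $x \in (0,1]$. The main technical obstacle is identifying the clean decomposition $\sum_{r<R}T_r^2 = Rg^2 + (2/g)\sum_h(g-h)C_h$ through the full-period cyclic correlations: naive Cauchy--Schwarz applied directly to the partial-period correlations $\sum_{v=0}^{(g-h)R-1}\phi_i(w_v)\phi_i(w_v+h/g)$ loses a factor $\sqrt{2}$ in the final constant, while routing through the cyclic $C_h$ preserves the precise $\sqrt{1-\delta}$ appearing in $\Theta_i$.
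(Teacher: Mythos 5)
Your proof is correct and lands on exactly the paper's bound, but the middle of the argument runs along a genuinely different computational route. The skeleton is shared: the same initial Cauchy--Schwarz plus \cref{lem:L2_orthogonality} to strip off the $\phi_{i+1}$ factor, and your two-step Cauchy--Schwarz (weighted over $h$, then over $v$ for each $C_h$) composes to the same inequality as the paper's single Cauchy--Schwarz applied to the off-diagonal triple sum; likewise your weighted decomposition $\sum_r T_r^2 = Rg^2+\frac{2}{g}\sum_{h\ge1}(g-h)C_h$ collapses, via $C_h=C_{g-h}$, to the paper's unweighted diagonal/off-diagonal split. Where you genuinely diverge is in how the saving $\sum_{h\ge1}|c_h|^2$ is extracted: the paper completes the sum over shifts, bounds the completed sum by \cref{lem:L2_orthogonality}, and subtracts the diagonal fourth moment via \cref{lem:L4_moment} (this is where $gR\ge 2g-1$, i.e.\ $R\ge2$, enters), whereas you evaluate each shifted correlation $\sum_v\phi_i^2(w_v)\phi_i^2(w_v+h/g)$ exactly by Fourier expansion (a shifted analogue of \cref{lem:L4_moment}, again using $Rg\ge 2g-1$) and then sum over $h$ with the identity $\sum_{h=1}^{g-1}(g-h)\cos(2\pi hk/g)=-g/2$. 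Both routes give the identical constant; the paper's is a bit shorter because it needs no per-shift expansion and no cosine identity, while yours makes the cancellation over shifts explicit and is the natural generalization if one wanted shift-dependent information. Two small points: your claim $C_0=Rg^2$ does not literally follow from \cref{lem:L2_orthogonality} (which concerns $R\mid g$ points), but it follows either from the same orthogonality computation over $Rg\ge 2g-1$ equally spaced points or by applying the lemma $R$ times to the $g$-spaced subfamilies, giving $C_0\le Rg^2$, which is all you need; and for the lower bound on $\Theta_g$ your argument ($|c_{g-1}|=1$, then $1-\sqrt{1-x}>x/2$) matches the paper's ``pick out $h=g-1$'' remark and correctly yields the strict inequality $\Theta_g>1/g^3$.
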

\begin{proof}
By putting $S=g$ in the definition \cref{def:Psi}, we have
\[
\Psi_{i}(t,R,g)
=
\frac{1}{g^{2}}
\sum_{0\le r<R}
\phi_{i+1}\biggl(\frac{t+r}{R}\biggr)
\sum_{0\le s<g}
\phi_{i}\biggl(\frac{t+r}{gR}+\frac{s}{g}\biggr).
\]
By using the Cauchy--Schwarz inequality
and \cref{lem:L2_orthogonality},
we obtain
\begin{equation}
\label{lem:Psi_bound_S_eq_g:first_CS}
\Psi_{i}(t,R,g)^{2}
\le
\frac{1}{g^{2}}
\sum_{0\le r<R}
\biggl(\sum_{0\le s<g}
\phi_{i}\biggl(\frac{t+r}{gR}+\frac{s}{g}\biggr)\biggr)^{2}.
\end{equation}
Since the function
\[
\mathbb{Z}
\to
\mathbb{R}
\semicolon
s
\mapsto
\phi_{i}\biggl(\frac{t+r}{gR}+\frac{s}{g}\biggr)
\]
has a period $g$, the sum over $s$ can be thought as a sum over residues $s\ \mod{g}$.
Thus, by \cref{lem:Psi_bound_S_eq_g:first_CS},
\begin{equation}
\label{lem:Psi_bound_S_eq_g:expand_square}
\begin{aligned}
\Psi_{i}(t,R,g)^{2}
&\le
\frac{1}{g^{2}}
\sum_{0\le r<R}
\sum_{0\le k<g}
\sum_{0\le\ell<g}
\phi_{i}\biggl(\frac{t+r}{gR}+\frac{k}{g}\biggr)
\phi_{i}\biggl(\frac{t+r}{gR}+\frac{\ell}{g}\biggr)\\
&=
\frac{1}{g^{2}}
\sum_{0\le r<R}
\sum_{0\le k<g}
\sum_{0\le\ell<g}
\phi_{i}\biggl(\frac{t+r}{gR}+\frac{k}{g}\biggr)
\phi_{i}\biggl(\frac{t+r}{gR}+\frac{k+\ell}{g}\biggr)\\
&=
\frac{1}{g^{2}}
\sum_{0\le r<R}
\sum_{0\le k<g}
\phi_{i}\biggl(\frac{t+r}{gR}+\frac{k}{g}\biggr)^{2}\\
&\hspace{0.1\textwidth}+
\frac{1}{g^{2}}
\sum_{0\le r<R}
\sum_{0\le k<g}
\sum_{1\le\ell<g}
\phi_{i}\biggl(\frac{t+r}{gR}+\frac{k}{g}\biggr)
\phi_{i}\biggl(\frac{t+r}{gR}+\frac{k+\ell}{g}\biggr).
\end{aligned}
\end{equation}
We can estimate the first sum by \cref{lem:L2_orthogonality} to get
\[
\Psi_{i}(t,R,g)^{2}
\le
R+
\frac{1}{g^{2}}
\sum_{0\le r<R}
\sum_{0\le k<g}
\sum_{1\le\ell<g}
\phi_{i}\biggl(\frac{t+r}{gR}+\frac{k}{g}\biggr)
\phi_{i}\biggl(\frac{t+r}{gR}+\frac{k+\ell}{g}\biggr).
\]
By applying the Cauchy--Schwarz inequality, this gives
\begin{equation}
\label{lem:Psi_bound_S_eq_g:second_CS}
\Psi_{i}(t,R,g)^{2}
\le
R+
\frac{1}{g^{2}}
\bigl(Rg(g-1)\Upsilon_{i}(t,R,g)\bigr)^{\frac{1}{2}},
\end{equation}
where
\[
\Upsilon_{i}(t,R,g)
\coloneqq
\sum_{0\le r<R}
\sum_{0\le k<g}
\sum_{1\le\ell<g}
\phi_{i}\biggl(\frac{t+r}{gR}+\frac{k}{g}\biggr)^{2}
\phi_{i}\biggl(\frac{t+r}{gR}+\frac{k+\ell}{g}\biggr)^{2}.
\]
We can rewrite $\Upsilon_{i}(t,R,g)$ as
\begin{equation}
\label{lem:Psi_bound_S_eq_g:Upsilon_decomp}
\begin{aligned}
\Upsilon_{i}(t,R,g)
&=
\sum_{0\le r<R}
\sum_{0\le k<g}
\sum_{0\le\ell<g}
\phi_{i}\biggl(\frac{t+r}{gR}+\frac{k}{g}\biggr)^{2}
\phi_{i}\biggl(\frac{t+r}{gR}+\frac{k+\ell}{g}\biggr)^{2}\\
&\hspace{0.4\textwidth}-
\sum_{0\le r<R}
\sum_{0\le k<g}
\phi_{i}\biggl(\frac{t+r}{gR}+\frac{k}{g}\biggr)^{4}.
\end{aligned}
\end{equation}
For the first sum in \cref{lem:Psi_bound_S_eq_g:Upsilon_decomp},
we can use \cref{lem:L2_orthogonality} to get
\begin{equation}
\label{lem:Psi_bound_S_eq_g:Upsilon_to_fourth_moment}
\Upsilon_{i}(t,R,g)
\le
Rg^{4}
-
\sum_{0\le r<R}
\sum_{0\le k<g}
\phi_{i}\biggl(\frac{t+r}{gR}+\frac{k}{g}\biggr)^{4}.
\end{equation}
The last sum can be rewritten as
\[
\sum_{0\le r<R}
\sum_{0\le k<g}
\phi_{i}\biggl(\frac{t+r}{gR}+\frac{k}{g}\biggr)^{4}
=
\sum_{0\le r<R}
\sum_{0\le k<g}
\phi_{i}\biggl(\frac{t+r+Rk}{gR}\biggr)^{4}
=
\sum_{0\le\ell<gR}
\phi_{i}\biggl(\frac{t+\ell}{gR}\biggr)^{4}.
\]
By the assumption $R\ge2$, we have $gR\ge2g-1$ and so \cref{lem:L4_moment} implies
\begin{align}
\sum_{0\le r<R}
\sum_{0\le k<g}
\phi_{i}\biggl(\frac{t+r}{gR}+\frac{k}{g}\biggr)^{4}
&=
Rg
\sum_{|h|<g}
\biggl|
\sum_{0\le n,n+h<g}
e(\alpha_{i}(n+h)-\alpha_{i}(n))
\biggr|^{2}\\
&=
Rg^{3}
+
2Rg\sum_{1\le h<g}
\biggl|
\sum_{0\le n,n+h<g}
e(\alpha_{i}(n+h)-\alpha_{i}(n))
\biggr|^{2}
\end{align}
On inserting this formula into \cref{lem:Psi_bound_S_eq_g:Upsilon_to_fourth_moment}, we get
\[
\Upsilon_{i}(t,R,g)
\le
Rg^{3}(g-1)
-2Rg\sum_{1\le h<g}
\biggl|
\sum_{0\le n,n+h<g}
e(\alpha_{i}(n+h)-\alpha_{i}(n))
\biggr|^{2}.
\]
On inserting this estimate into \cref{lem:Psi_bound_S_eq_g:second_CS}, we have
\[
\Psi_{i}(t,R,g)^{2}
\le
R
+
R(g-1)
\biggl(
1
-
\frac{2}{g^{2}(g-1)}\sum_{1\le h<g}
\biggl|
\sum_{0\le n,n+h<g}
e(\alpha_{i}(n+h)-\alpha_{i}(n))
\biggr|^{2}
\biggr)^{\frac{1}{2}}.
\]
By rewriting the right-hand side, we obtain the first result.

For the estimate of $\Theta_{i}(\bm{\alpha})$,
it suffices to pick up the contribution of the term with $h=g-1$,
which implies $n=0$.
\end{proof}

Let us introduce
\begin{equation}
\label{def:eta}
\eta_{g}
\coloneqq
\max\biggl(
\frac{1}{2}-\frac{\log\frac{3}{2}}{4\log g-2\log 2},
\frac{1}{2}+\frac{\log(1-\Theta_{g})}{4\log g}
\biggr).
\end{equation}
Since
\[
\frac{1}{2}+\frac{\log(1-\Theta_{g})}{4\log g}
\le
\frac{1}{2}-\frac{\Theta_{g}}{4\log g}
<
\frac{1}{2}-\frac{1}{4g^{3}\log g}
\]
and
\[
0.2075187
<
\frac{1}{2}-\frac{\log\frac{3}{2}}{2\log 2}
\le
\frac{1}{2}-\frac{\log\frac{3}{2}}{4\log g-2\log 2}
<
\frac{1}{2}-\frac{1}{16\log g}
<
\frac{1}{2}-\frac{1}{4g^{3}\log g},
\]
we have
\begin{equation}
\label{eta_bound}
0.2075187
<
\eta_{g}
<
\frac{1}{2}-\frac{1}{4g^{3}\log g}.
\end{equation}

\begin{lemma}
\label{lem:Psi_bound}
For $i\in\mathbb{Z}_{\ge0}$, $t\in\mathbb{R}$, $g,R,S\in\mathbb{N}$ with $R\mid g$, $S\mid g$,
$(R,g/S)=1$ and $R\ge2$, we have
\[
\Psi_{i}(t,R,S)
\le
(RS)^{\eta_{g}}.
\]
\end{lemma}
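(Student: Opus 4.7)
The plan is to bound $\Psi_{i}(t,R,S)^{2}$ in the two regimes $S = g$ and $S \neq g$ separately, invoking \cref{lem:Psi_bound_S_eq_g} and \cref{lem:Psi_bound_S_neq_g} respectively, and then to verify that the definition of $\eta_{g}$ in \cref{def:eta} is chosen precisely to package both bounds into a uniform estimate of the form $(RS)^{2\eta_{g}}$.

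First, assume $S = g$. By \cref{lem:Psi_bound_S_eq_g} together with \cref{lem:Psi_bound_S_eq_g:def:Theta}, I obtain $\Psi_{i}(t,R,g)^{2} \le Rg(1 - \Theta_{i}(\bm{\alpha})) \le Rg(1 - \Theta_{g})$. Since $R \mid g$ forces $R \le g$, we have $Rg \le g^{2}$, and the desired bound $\Psi_{i}(t,R,g)^{2} \le (Rg)^{2\eta_{g}}$ reduces to $1 - \Theta_{g} \le (Rg)^{2\eta_{g} - 1}$, equivalently $\log(1-\Theta_{g}) \le (2\eta_{g} - 1)\log(Rg)$. Since $\eta_{g} < 1/2$ and $\log(1 - \Theta_{g}) < 0$, this constraint is tightest when $\log(Rg)$ is as large as possible, and the bound $\log(Rg) \le 2\log g$ reduces the claim to $\eta_{g} \ge \tfrac{1}{2} + \tfrac{\log(1 - \Theta_{g})}{4 \log g}$, which is exactly the second entry of the $\max$ in \cref{def:eta}.

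Next, assume $S \neq g$. Since $S \mid g$ with $S < g$, the largest such $S$ can be is $g/2$; combined with $R \le g$ this gives $RS \le g^{2}/2$. The second assertion of \cref{lem:Psi_bound_S_neq_g} yields $\Psi_{i}(t,R,S)^{2} \le \tfrac{2}{3} RS$, so the desired inequality becomes $\tfrac{2}{3} \le (RS)^{2\eta_{g} - 1}$, i.e.\ $(1 - 2\eta_{g})\log(RS) \le \log(3/2)$. Bounding $\log(RS) \le 2\log g - \log 2$ reduces this to $\eta_{g} \ge \tfrac{1}{2} - \tfrac{\log(3/2)}{4\log g - 2\log 2}$, which is the first entry of the $\max$ in \cref{def:eta}.

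There is no real obstacle here: the two preceding lemmas have already done all the non-trivial analytic work, and the role of \cref{def:eta} is precisely to bundle their conclusions into a single inequality. The only step requiring a little care is the observation that $S \mid g$ with $S < g$ forces $S \le g/2$ rather than the weaker $S \le g - 1$: with the crude bound, the constant $\tfrac{\log(3/2)}{4\log g - 2\log 2}$ appearing in \cref{def:eta} would have to be worsened.
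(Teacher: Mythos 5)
Your proposal is correct and follows essentially the same route as the paper: split into the cases $S\neq g$ and $S=g$, apply \cref{lem:Psi_bound_S_neq_g} and \cref{lem:Psi_bound_S_eq_g} respectively, and use $R\le g$ together with $S\le g/2$ (in the first case) to check that both resulting exponents are dominated by the two entries in the $\max$ defining $\eta_{g}$ in \cref{def:eta}. The only cosmetic difference is that the paper rewrites each bound as $(RS)^{\text{exponent}}$ before comparing with $\eta_{g}$, while you verify the equivalent logarithmic inequalities directly.
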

\begin{proof}
When $S\neq g$, since $R\ge2$, \cref{lem:Psi_bound_S_neq_g} gives
\[
\Psi_{i}(t,R,S)
\le
(RS)^{\frac{1}{2}-\frac{\log\frac{3}{2}}{2\log RS}}.
\]
Since $S\mid g$ and $S\neq g$ implies $S\le g/2$, we have
\[
\log RS\le\log (g^{2}/2)=2\log g-\log 2
\]
and so
\[
\Psi_{i}(t,R,S)
\le
(RS)^{\frac{1}{2}-\frac{\log\frac{3}{2}}{4\log g-2\log 2}}
\le
(RS)^{\eta_{g}}.
\]
Thus, the assertion holds if $S\neq g$.

When $S=g$, then since $R\ge2$, \cref{lem:Psi_bound_S_eq_g} gives
\[
\Psi_{i}(t,R,S)
\le
(Rg)^{\frac{1}{2}+\frac{\log(1-\Theta_{g})}{2\log Rg}}
\le
(Rg)^{\frac{1}{2}+\frac{\log(1-\Theta_{g})}{4\log g}}
\le
(RS)^{\eta_{g}}.
\]
Thus, the assertion holds even if $S=g$.
\end{proof}

\begin{lemma}
\label{lem:F_L1_moment}
For $\lambda,\delta,j\in\mathbb{Z}_{\ge0}$, $g,k\in\mathbb{N}$, $a\in\mathbb{Z}$ and $\beta\in\mathbb{R}$ with
\[
g\ge 2,\quad
\lambda\ge0,\quad
0\le\delta\le\lambda,\quad
j\ge 0,\quad
k\ge1,\quad
kg^{\delta}\mid g^{\lambda},\quad
g\nmid k,
\]
we have
\[
\sum_{\substack{
0\le h<g^{\lambda}\\
h\equiv a\ \mod{kg^{\delta}}
}}
\biggl|F_{\lambda}^{[j]}\biggl(\frac{h+\beta}{g^{\lambda}}\biggr)\biggr|
\le
g\biggl(\frac{g^{\lambda}}{kg^{\delta}}\biggr)^{\eta_{g}}
\biggl|F_{\delta}^{[j+\lambda-\delta]}\biggl(\frac{a+\beta}{g^{\delta}}\biggr)\biggr|.
\]
In particular, for $k=1$, $\delta=0$, we have
\begin{equation}
\label{lem:F_L1_moment:pure}
\sum_{0\le h<g^{\lambda}}
\biggl|F_{\lambda}^{[j]}\biggl(\frac{h+\beta}{g^{\lambda}}\biggr)\biggr|
\le
g^{\eta_{g}\lambda+1}.
\end{equation}
\end{lemma}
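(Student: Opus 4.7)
The plan is to factor $|F_\lambda^{[j]}|$ using the recursion \cref{lem:F_recursion}, extract the $|F_\delta^{[j+\lambda-\delta]}|$-factor via the congruence condition, and bound the remaining product of $\phi_i^{[j]}$'s by pairing consecutive factors and invoking the $\Psi$-bound \cref{lem:Psi_bound}. Setting $M := \lambda - \delta$ and iterating \cref{lem:F_recursion} yields
\[
|F_\lambda^{[j]}(\beta')|
=
\frac{1}{g^{M}}
\bigl|F_\delta^{[j+M]}(\beta' g^M)\bigr|
\prod_{i=0}^{M-1} \phi_i^{[j]}(\beta' g^i);
\]
substituting $\beta' = (h+\beta)/g^\lambda$ with $h = a + kg^\delta h'$ turns $\beta' g^M$ into $(a+\beta)/g^\delta + kh' \equiv (a+\beta)/g^\delta \pmod{1}$, so the $|F_\delta^{[j+M]}|$-factor becomes $|F_\delta^{[j+M]}((a+\beta)/g^\delta)|$, independent of $h'$, and can be pulled outside the sum. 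It remains to prove
\[
S
:=
\sum_{h'=0}^{g^M/k - 1}
\prod_{i=0}^{M-1} \phi_i^{[j]}\bigl((a+\beta)g^i/g^\lambda + kh'/g^{M-i}\bigr)
\le
g^{M+1}(g^M/k)^{\eta_g}.
\]

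I would prove this estimate by strong induction on $M$. The cases $M = 0$ (trivial) and $M = 1$ (where $k \mid g$ is forced; apply \cref{lem:L2_orthogonality} with $R = g/k$ followed by Cauchy--Schwarz, using $k^{1/2 - \eta_g} \ge 1$) are handled directly. For the inductive step with $M \ge 2$ and $k \mid g^{M-2}$, write $h = h_{\mathrm{low}} + g^{\lambda-2} h_{\mathrm{pair}}$ with $h_{\mathrm{low}} \equiv a \pmod{kg^\delta}$, $h_{\mathrm{low}} \in [0, g^{\lambda-2})$ and $h_{\mathrm{pair}} \in [0, g^2)$. The assumption $k \mid g^{M-2}$ makes $g^{\lambda-2} h_{\mathrm{pair}}$ a multiple of $kg^\delta$, so the congruence decouples and $h_{\mathrm{pair}}$ ranges freely. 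Setting $h_{\mathrm{pair}} = r + gs$ with $r, s \in [0, g)$ and using the periodicity of $\phi_1^{[j]}$, the inner sum $\sum_{h_{\mathrm{pair}}} \phi_0^{[j]} \phi_1^{[j]}$ matches $g^2 \Psi_0(t, g, g)$ (with $\bm{\alpha}^{[j]}$ in place of $\bm{\alpha}$) for $t = (h_{\mathrm{low}} + \beta)/g^{\lambda-2}$, so \cref{lem:Psi_bound} bounds it by $g^{2+2\eta_g}$. The outer sum over $h_{\mathrm{low}}$ has exactly the form treated by the inductive hypothesis with $(\lambda, j)$ replaced by $(\lambda - 2, j + 2)$, closing the step.

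The main obstacle is the case $k \nmid g^{M-2}$, in which the congruence couples $h_{\mathrm{low}}$ and $h_{\mathrm{pair}}$ and the peel-two reduction is unavailable. Such cases occur only when $M \le \nu_0 + 1$, where $\nu_0$ denotes the least integer with $k \mid g^{\nu_0}$. In this regime $S$ has relatively few terms, and I would treat these as additional base cases by direct computation: Cauchy--Schwarz combined with the $L^2$-orthogonality of \cref{lem:L2_orthogonality} (after expanding $(\phi_i^{[j]})^2$ and exploiting orthogonality of exponentials to evaluate $\sum_{h'} (\phi_i^{[j]})^2$) yields the required bound, with the inequality $(g^M/k)^{1/2 - \eta_g} \le g$ supplying the necessary slack. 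The particular case \cref{lem:F_L1_moment:pure} then follows by specializing to $k = 1$ and $\delta = 0$, for which the right-hand side reduces to $g^{\eta_g \lambda + 1}$.
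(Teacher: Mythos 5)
Your overall strategy is the right one, and in the regime where it applies it matches the paper's proof: the factorization via \cref{lem:F_recursion}, the observation that the $F_\delta^{[j+\lambda-\delta]}$-factor is constant on the progression, and the two-digit peeling in which the inner double sum is recognized as $g^{2}\Psi_{0}(t,g,g)$ (for the shifted seed) and bounded by $g^{2+2\eta_g}$ via \cref{lem:Psi_bound} are all correct, and they are exactly the paper's two-step reduction in the special case where the congruence decouples. The $M=0,1$ base cases are also fine.

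The genuine gap is the residual case $k\nmid g^{M-2}$. Your claim that in this regime the sum ``has relatively few terms'' and that the slack inequality $(g^{M}/k)^{1/2-\eta_g}\le g$ saves the day is false unless $g$ is a prime power. If $g$ has two distinct prime factors, $k$ can be a near-maximal power of one of them while $N\coloneqq g^{M}/k$ is still exponentially large: take $g=6$, $\delta=0$, $k=2^{M}$ (allowed, since $k\mid g^{M}$ and $g\nmid k$). Then $k\nmid g^{M-2}$ for every $M$, so your induction never gets started and the whole lemma falls into your ``base case'', with $N=3^{M}$ terms. Your Cauchy--Schwarz plus orthogonality computation does give $S\le g^{M}N^{1/2}$ (the full expansion over digit vectors works out), but to reach the target $g^{M+1}N^{\eta_g}$ you need $N^{1/2-\eta_g}\le g$, and by \cref{eta_bound} we have $\frac12-\eta_g>\frac{1}{4g^{3}\log g}$, a positive constant depending only on $g$, so $N^{1/2-\eta_g}\to\infty$ as $M\to\infty$. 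Since the lemma is used with $\lambda\asymp\log x/\log g$ and with $k$ ranging over all unitary-to-$g$ parts of divisors of $g^{\lambda}$ (e.g.\ in the Type II estimate), this case cannot be dismissed. The paper avoids the dichotomy altogether by letting the modulus evolve with the level: it sets $d_{\theta}=(g^{\theta},kg^{\delta})$ and $\rho_{\theta}=d_{\theta}/d_{\theta-1}$, shows $\rho_{\theta}\mid g$, $\rho_{\theta}<g$ and $(g/\rho_{\theta-1},\rho_{\theta})=1$, and then every two-step peeling produces a sum of the form $\Psi_{j}(t,R,S)$ with $R=g/\rho_{\theta-1}\ge2$, $S=g/\rho_{\theta}$ and $(R,g/S)=1$, so \cref{lem:Psi_bound} yields the factor $(RS)^{\eta_g}=(g^{2}/\rho_{\theta-1}\rho_{\theta})^{\eta_g}$ at every pair of levels even when the congruence does not decouple; a single one-step reduction handles odd $\lambda-\delta$, and the product of the $\rho_{\theta}$'s reassembles $k$. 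Recovering the $\eta_g$-saving in your coupled case would require precisely this general-$(R,S)$ bookkeeping, so the missing piece is substantive rather than a routine patch of your fallback.
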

\begin{proof}
When $\lambda=\delta$, then the condition $kg^{\delta}\mid g^{\lambda}$ implies
$k=1$ and so the left-hand side of the assertion is $|F_{\delta}^{[j]}(\frac{a+\beta}{g^{\delta}})|$
and the right-hand side of the assertion is $g|F_{\delta}^{[j]}(\frac{a+\beta}{g^{\delta}})|$
so that the assertion holds trivially.
We may thus assume $\lambda>\delta$.

For integers $\theta$ with $\delta\le\theta\le\lambda$, we write
\begin{equation}
\label{lem:F_L1_moment:def:d_u}
d_{\theta}\coloneqq(g^{\theta},kg^{\delta})
\quad\text{and}\quad
u_{\theta}\coloneqq\frac{g^{\theta}}{d_{\theta}}.
\end{equation}
For $\delta<\theta\le\lambda$, since
$d_{\theta-1}=(g^{\theta-1},kg^{\delta})\mid(g^{\theta},kg^{\delta})=d_{\theta}$,
we also write
\begin{equation}
\label{lem:F_L1_moment:def:rho}
\rho_{\theta}
\coloneqq
\frac{d_{\theta}}{d_{\theta-1}}\in\mathbb{N}.
\end{equation}
Note that we then have
\[
d_{\theta-1}(\rho_{\theta},g)
=
(d_{\theta},gd_{\theta-1})
=
(g^{\theta},kg^{\delta},g^{\theta},kg^{\delta+1})
=
(g^{\theta},kg^{\delta})
=
d_{\theta}
=
\rho_{\theta}d_{\theta-1}
\]
so that
\begin{equation}
\label{lem:F_L1_moment:rho_div_g}
\rho_{\theta}\mid g.
\end{equation}
Also, we have
\[
\rho_{\theta}=g
\implies
g^{\delta+1}
\mid
g\cdot g^{\delta}(g^{\theta-\delta-1},k)
=
gd_{\theta-1}
=
\rho_{\theta}d_{\theta-1}
=
d_{\theta}
=
g^{\delta}(g^{\theta-\delta},k)
\]
and so $g\mid (g^{\theta-\delta},k)\mid k$,
which contradicts the assumption $g\nmid k$. Thus, we have
\begin{equation}
\label{lem:F_L1_moment:rho_less_g}
\rho_{\theta}<g.
\end{equation}
Also, we have
\[
d_{\theta-1}(\rho_{\theta},u_{\theta-1})
=
(d_{\theta},g^{\theta-1})
=
(g^{\theta},kg^{\delta},g^{\theta-1})
=
(g^{\theta-1},kg^{\delta})
=
d_{\theta-1}
\]
and so
\begin{equation}
\label{lem:F_L1_moment:rho_u_coprime}
(\rho_{\theta},u_{\theta-1})=1.
\end{equation}

We first prove a reduction formula for
\[
G_{\theta}^{[j]}(a,d_{\theta})
\coloneqq
\sum_{\substack{
0\le h<g^{\theta}\\
h\equiv a\ \mod{d_{\theta}}
}}
\biggl|F_{\theta}^{[j]}\biggl(\frac{h+\beta}{g^{\theta}}\biggr)\biggr|.
\]
Assume $\delta<\theta\le\lambda$. We can write
\[
G_{\theta}^{[j]}(a,d_{\theta})
=
\sum_{0\le u<u_{\theta}}
\biggl|F_{\theta}^{[j]}\biggl(\frac{a+ud_{\theta}+\beta}{g^{\theta}}\biggr)\biggr|.
\]
Note that
\begin{equation}
\label{lem:F_L1_moment:u_div_u}
\frac{u_{\theta}}{u_{\theta-1}}
=
\frac{g^{\theta}}{g^{\theta-1}}
\frac{d_{\theta-1}}{d_{\theta}}
=
\frac{g}{\rho_{\theta}}
\in\mathbb{N}
\end{equation}
since $\rho_{\theta}\mid g$ as in \cref{lem:F_L1_moment:rho_div_g}. We can thus change variables via $u=su_{\theta-1}+v$ as
\[
G_{\theta}^{[j]}(a,d_{\theta})
=
\sum_{0\le v<u_{\theta-1}}
\sum_{0\le s<g/\rho_{\theta}}
\biggl|F_{\theta}^{[j]}\biggl(
\frac{a+vd_{\theta}+su_{\theta-1}d_{\theta}+\beta}{g^{\theta}}
\biggr)\biggr|.
\]
Since $u_{\theta-1}d_{\theta}=g^{\theta-1}\rho_{\theta}$, we get
\begin{equation}
\label{lem:L1_moment:first_step}
G_{\theta}^{[j]}(a,d_{\theta})
=
\sum_{0\le v<u_{\theta-1}}
\sum_{0\le s<g/\rho_{\theta}}
\biggl|F_{\theta}^{[j]}\biggl(
\frac{a+vd_{\theta}+sg^{\theta-1}\rho_{\theta}+\beta}{g^{\theta}}
\biggr)\biggr|.
\end{equation}
By using \cref{lem:F_recursion}, periodicity, and a trivial bound $\phi_{j}(\beta)\le g$, we have
\begin{align}
&\biggl|F_{\theta}^{[j]}\biggl(
\frac{a+vd_{\theta}+sg^{\theta-1}\rho_{\theta}+\beta}{g^{\theta}}
\biggr)\biggr|\\
&=
\biggl|F_{\theta-1}^{[j+1]}\biggl(
\frac{a+vd_{\theta}+sg^{\theta-1}\rho_{\theta}+\beta}{g^{\theta-1}}
\biggr)\biggr|
\times
\frac{1}{g}
\phi_{j}\biggl(
\frac{a+vd_{\theta}+sg^{\theta-1}\rho_{\theta}+\beta}{g^{\theta}}
\biggr)\\
&\le
\biggl|F_{\theta-1}^{[j+1]}\biggl(
\frac{a+vd_{\theta}+\beta}{g^{\theta-1}}
\biggr)\biggr|.
\end{align}
In particular, we have the bound
\begin{align}
G_{\theta}^{[j]}(a,d_{\theta})
&\le
\frac{g}{\rho_{\theta}}
\sum_{0\le v<u_{\theta-1}}
\biggl|F_{\theta-1}^{[j+1]}\biggl(
\frac{a+vd_{\theta}+\beta}{g^{\theta-1}}
\biggr)\biggr|
=
\frac{g}{\rho_{\theta}}
\sum_{0\le v<u_{\theta-1}}
\biggl|F_{\theta-1}^{[j+1]}\biggl(
\frac{a+v\rho_{\theta}d_{\theta-1}+\beta}{g^{\theta-1}}
\biggr)\biggr|.
\end{align}
Since $(\rho_{\theta},u_{\theta-1})=1$ and the function
\[
\mathbb{Z}\to\mathbb{R}
\semicolon
v
\mapsto
\biggl|F_{\theta-1}^{[j+1]}\biggl(
\frac{a+vd_{\theta-1}+\beta}{g^{\theta-1}}
\biggr)\biggr|
\]
has a period $u_{\theta-1}$, we can change variables to obtain
\begin{equation}
\label{lem:F_L1_moment:one_step}
G_{\theta}^{[j]}(a,d_{\theta})
\le
\frac{g}{\rho_{\theta}}
\sum_{0\le v<u_{\theta-1}}
\biggl|F_{\theta-1}^{[j+1]}\biggl(
\frac{a+vd_{\theta-1}+\beta}{g^{\theta-1}}
\biggr)\biggr|
=
\frac{g}{\rho_{\theta}}
G_{\theta-1}^{[j+1]}(a,d_{\theta}).
\end{equation}
This is a trivial one-step reduction.

We next consider two-step reduction assuming $\delta+1<\theta\le\lambda$.
We change variables via
\[
v=ru_{\theta-2}+u
\]
in \cref{lem:L1_moment:first_step}. This gives
\[
G_{\theta}^{[j]}(a,d_{\theta})
=
\sum_{0\le u<u_{\theta-2}}
\sum_{0\le r<g/\rho_{\theta-1}}
\sum_{0\le s<g/\rho_{\theta}}
\biggl|F_{\theta}^{[j]}\biggl(
\frac{a+ud_{\theta}
+ru_{\theta-2}d_{\theta}
+sg^{\theta-1}\rho_{\theta}
+\beta}{g^{\theta}}
\biggr)\biggr|.
\]
Since $u_{\theta-2}d_{\theta}=g^{\theta-2}\rho_{\theta-1}\rho_{\theta}$,
by writing
\[
R_{\theta}
\coloneqq
\frac{g}{\rho_{\theta}},
\]
we get
\begin{align}
G_{\theta}^{[j]}(a,d_{\theta})
&=
\sum_{0\le u<u_{\theta-2}}
\sum_{0\le r<g/\rho_{\theta-1}}
\sum_{0\le s<g/\rho_{\theta}}
\biggl|F_{\theta}^{[j]}\biggl(
\frac{a+ud_{\theta}
+rg^{\theta-2}\rho_{\theta}\rho_{\theta-1}
+sg^{\theta-1}\rho_{\theta}
+\beta}{g^{\theta}}
\biggr)\biggr|\\
&=
\sum_{0\le u<u_{\theta-2}}
\sum_{0\le r<R_{\theta-1}}
\sum_{0\le s<R_{\theta}}
\biggl|F_{\theta}^{[j]}\biggl(
\frac{a+ud_{\theta}+\beta}{g^{\theta}}
+\frac{r}{R_{\theta-1}R_{\theta}}
+\frac{s}{R_{\theta}}
\biggr)\biggr|.
\end{align}
By applying \cref{lem:F_recursion} twice and by using periodicity and $R_{\theta}\mid g$, we have
\begin{align}
&\biggl|F_{\theta}^{[j]}\biggl(
\frac{a+ud_{\theta}+\beta}{g^{\theta}}
+\frac{r}{R_{\theta-1}R_{\theta}}
+\frac{s}{R_{\theta}}
\biggr)\biggr|\\
&=\biggl|F_{\theta-1}^{[j+1]}\biggl(
\frac{a+ud_{\theta}+\beta}{g^{\theta-1}}
+\frac{gr}{R_{\theta-1}R_{\theta}}
+\frac{gs}{R_{\theta}}
\biggr)\biggr|\\
&\hspace{0.1\textwidth}
\times
\frac{1}{g}\phi_{j}\biggl(
\frac{a+ud_{\theta}+\beta}{g^{\theta}}
+\frac{r}{R_{\theta-1}R_{\theta}}
+\frac{s}{R_{\theta}}
\biggr)\\
&=
\biggl|F_{\theta-1}^{[j+1]}\biggl(
\frac{a+ud_{\theta}+\beta}{g^{\theta-1}}
+\frac{gr}{R_{\theta-1}R_{\theta}}
\biggr)\biggr|\\
&\hspace{0.1\textwidth}
\times
\frac{1}{g}\phi_{j}\biggl(
\frac{a+ud_{\theta}+\beta}{g^{\theta}}
+\frac{r}{R_{\theta-1}R_{\theta}}
+\frac{s}{R_{\theta}}
\biggr)\\
&=
\biggl|F_{\theta-2}^{[j+2]}\biggl(
\frac{a+ud_{\theta}+\beta}{g^{\theta-2}}
+\frac{g^{2}r}{R_{\theta-1}R_{\theta}}
\biggr)\biggr|\\
&\hspace{0.1\textwidth}
\times
\frac{1}{g^{2}}
\phi_{j+1}\biggl(
\frac{a+ud_{\theta}+\beta}{g^{\theta-1}}
+\frac{gr}{R_{\theta-1}R_{\theta}}
\biggr)
\phi_{j}\biggl(
\frac{a+ud_{\theta}+\beta}{g^{\theta}}
+\frac{r}{R_{\theta-1}R_{\theta}}
+\frac{s}{R_{\theta}}
\biggr)\\
&=
\biggl|F_{\theta-2}^{[j+2]}\biggl(
\frac{a+ud_{\theta}+\beta}{g^{\theta-2}}
\biggr)\biggr|\\
&\hspace{0.1\textwidth}
\times
\frac{1}{g^{2}}
\phi_{j+1}\biggl(
\frac{a+ud_{\theta}+\beta}{g^{\theta-1}}
+\frac{gr}{R_{\theta-1}R_{\theta}}
\biggr)
\phi_{j}\biggl(
\frac{a+ud_{\theta}+\beta}{g^{\theta}}
+\frac{r}{R_{\theta-1}R_{\theta}}
+\frac{s}{R_{\theta}}
\biggr).
\end{align}
We thus have
\begin{equation}
\label{lem:F_L1_moment:two_step:prefinal}
\begin{aligned}
&G_{\theta}^{[j]}(a,d_{\theta})\\
&\le
\sum_{0\le u<u_{\theta-2}}
\biggl|F_{\theta-2}^{[j+2]}\biggl(
\frac{a+ud_{\theta}+\beta}{g^{\theta-2}}
\biggr)\biggr|\\
&\hspace{0.1\textwidth}
\times
\frac{1}{g^{2}}
\sum_{0\le r<R_{\theta-1}}
\sum_{0\le s<R_{\theta}}
\phi_{j+1}\biggl(
\frac{a+ud_{\theta}+\beta}{g^{\theta-1}}
+\frac{gr}{R_{\theta-1}R_{\theta}}
\biggr)\\
&\hspace{0.4\textwidth}
\times
\phi_{j}\biggl(
\frac{a+ud_{\theta}+\beta}{g^{\theta}}
+\frac{r}{R_{\theta-1}R_{\theta}}
+\frac{s}{R_{\theta}}
\biggr)\\
&=
\sum_{0\le u<u_{\theta-2}}
\biggl|F_{\theta-2}^{[j+2]}\biggl(
\frac{a+ud_{\theta}+\beta}{g^{\theta-2}}
\biggr)\biggr|
\Psi_{j}(t_{\theta},R_{\theta-1},R_{\theta})
\end{aligned}
\end{equation}
with
\[
t_{\theta}
\coloneqq
\frac{(a+ud_{\theta}+\beta)R_{\theta-1}R_{\theta}}{g^{\theta}}.
\]
We then use \cref{lem:Psi_bound} with
$i\coloneqq j$, $t\coloneqq t_{\theta}$,
$R\coloneqq R_{\theta-1}$ and $S\coloneqq R_{\theta}$.
We then have $R\mid g$ and $S\mid g$.
By \cref{lem:F_L1_moment:rho_less_g} and $\delta+1<\theta\le\lambda$,
we have $R\ge 2$. Also, since
\[
(gd_{\theta-2},d_{\theta})
=
(g^{\theta-1},kg^{\delta+1},g^{\theta},kg^{\delta})
=
(g^{\theta-1},kg^{\delta})
=
d_{\theta-1},
\]
we have
\[
(R,g/S)
=(R_{\theta-1},g/R_{\theta})
=(g/\rho_{\theta-1},\rho_{\theta})
=(gd_{\theta-2},d_{\theta})/d_{\theta-1}
=
1.
\]
Thus, by using \cref{lem:Psi_bound} in \cref{lem:F_L1_moment:two_step:prefinal}, we have
\begin{align}
G_{\theta}^{[j]}(a,d_{\theta})
&\le
(R_{\theta-1}R_{\theta})^{\eta_{g}}
\sum_{0\le u<u_{\theta-2}}
\biggl|F_{\theta-2}^{[j+2]}\biggl(
\frac{a+ud_{\theta}+\beta}{g^{\theta-2}}
\biggr)\biggr|\\
&=
\biggl(\frac{g^{2}}{\rho_{\theta-1}\rho_{\theta}}\biggr)^{\eta_{g}}
\sum_{0\le u<u_{\theta-2}}
\biggl|F_{\theta-2}^{[j+2]}\biggl(
\frac{a+ud_{\theta-2}\rho_{\theta-1}\rho_{\theta}+\beta}{g^{\theta-2}}
\biggr)\biggr|
\end{align}
Since \cref{lem:F_L1_moment:rho_u_coprime}
and \cref{lem:F_L1_moment:u_div_u}
imply $(\rho_{\theta}\rho_{\theta-1},u_{\theta-2})=1$
and the function
\[
\mathbb{Z}\to\mathbb{R}
\semicolon
u
\mapsto
\biggl|F_{\theta-2}^{[j+2]}\biggl(
\frac{a+ud_{\theta-2}+\beta}{g^{\theta-2}}
\biggr)\biggr|
\]
has a period $u_{\theta-2}$, we can change variables to get
\begin{equation}
\label{lem:F_L1_moment:two_step}
G_{\theta}^{[j]}(a,d_{\theta})
\le
\biggl(\frac{g^{2}}{\rho_{\theta-1}\rho_{\theta}}\biggr)^{\eta_{g}}
\sum_{0\le u<u_{\theta-2}}
\biggl|F_{\theta-2}^{[j+2]}\biggl(
\frac{a+ud_{\theta-2}+\beta}{g^{\theta-2}}
\biggr)\biggr|
=
\biggl(\frac{g^{2}}{\rho_{\theta-1}\rho_{\theta}}\biggr)^{\eta_{g}}
G_{\theta-2}^{[j+2]}(a,d_{\theta}).
\end{equation}
This is a two-step reduction.

Finally, by using \cref{lem:F_L1_moment:two_step} $[(\lambda-\delta)/2]$-times
and by using \cref{lem:F_L1_moment:one_step} if $\lambda-\delta$ is odd, we obtain
\begin{align}
\sum_{\substack{
0\le h<g^{\lambda}\\
h\equiv a\ \mod{kg^{\delta}}
}}
\biggl|F_{\lambda}^{[j]}\biggl(\frac{h+\beta}{g^{\lambda}}\biggr)\biggr|
&=
G_{\lambda}^{[j]}(a,d_{\lambda})
\le
\cdots
\le
(\rho_{\lambda}\cdots\rho_{\delta+1})^{-\eta_{g}}
g^{2[(\lambda-\delta)/2]\eta_{g}+1}
G_{\delta}^{[j+\lambda-\delta]}(a,d_{\delta}).
\end{align}
since $0<\eta_{g}<1$ by \cref{eta_bound}.
Since
$\rho_{\lambda}\cdots\rho_{\delta+1}
=
\frac{d_{\lambda}}{d_{\delta}}
=
\frac{kg^{\delta}}{g^{\delta}}
=
k$
and
\[
G_{\delta}^{[j+\lambda-\delta]}(a,d_{\delta})
=
G_{\delta}^{[j+\lambda-\delta]}(a,g^{\delta})
=
\sum_{\substack{
0\le h<g^{\delta}\\
h\equiv a\ \mod{g^{\delta}}
}}
\biggl|
F_{\delta}^{[j+\lambda-\delta]}\biggl(\frac{h+\beta}{g^{\delta}}\biggr)
\biggr|
=
\biggl|
F_{\delta}^{[j+\lambda-\delta]}\biggl(\frac{a+\beta}{g^{\delta}}\biggr)
\biggr|,
\]
we obtain the assertion.
\end{proof}

\subsection{A hybrid bound}
\label{subsec:large_sieve}
We finally prove a hybrid bound by combining the $L^{\infty}$ and $L^{1}$ bounds.

\begin{lemma}
\label{lem:continous_L1}
For $g\in\mathbb{Z}_{\ge2}$, $\bm{\alpha}\in\mathscr{A}_{g}$, $\lambda,j\in\mathbb{Z}_{\ge0}$, we have
\[
\int_{0}^{1}
|F_{\lambda}^{[j]}(\beta)|d\beta
\ll
g^{-(1-\eta_{g})\lambda},
\]
where the implicit constant depends only on $g$.
\end{lemma}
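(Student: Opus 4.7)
The plan is to reduce the continuous $L^{1}$-integral to the uniform discrete $L^{1}$-bound \cref{lem:F_L1_moment:pure} that is already in hand. The key observation is that \cref{lem:F_L1_moment:pure} provides a bound
\[
\sum_{0\le h<g^{\lambda}}\biggl|F_{\lambda}^{[j]}\biggl(\frac{h+\beta}{g^{\lambda}}\biggr)\biggr|\le g^{\eta_{g}\lambda+1}
\]
that is uniform in $\beta\in\mathbb{R}$. Hence, if the integral $\int_{0}^{1}|F_{\lambda}^{[j]}(\beta)|d\beta$ can be written as an average of such discrete $L^{1}$-sums, then we will immediately obtain the desired bound, up to the extra factor $g^{-\lambda}$ reflecting the mesh of the discretization.

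To carry this out, I would partition $[0,1]$ into the $g^{\lambda}$ consecutive subintervals $[h/g^{\lambda},(h+1)/g^{\lambda})$ with $h=0,\ldots,g^{\lambda}-1$, and on each of them change variables via $\beta=(h+t)/g^{\lambda}$ with $t\in[0,1)$. Swapping the (finite) sum with the integral via Fubini, this yields
\[
\int_{0}^{1}|F_{\lambda}^{[j]}(\beta)|d\beta
=
\frac{1}{g^{\lambda}}
\int_{0}^{1}\biggl(\sum_{0\le h<g^{\lambda}}\biggl|F_{\lambda}^{[j]}\biggl(\frac{h+t}{g^{\lambda}}\biggr)\biggr|\biggr)dt.
\]
Applying \cref{lem:F_L1_moment:pure} pointwise in $t\in[0,1)$ bounds the parenthesized sum by $g^{\eta_{g}\lambda+1}$ uniformly in $t$, and integrating out $t$ leaves $g^{\eta_{g}\lambda+1}\cdot g^{-\lambda}=g\cdot g^{-(1-\eta_{g})\lambda}$, which gives the stated $\ll g^{-(1-\eta_{g})\lambda}$ with implicit constant depending only on $g$.

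I do not anticipate any genuine obstacle here: all the technical work, in particular the bound on the auxiliary sum $\Psi_{i}$ from \cref{lem:Psi_bound} and its iterative application in the proof of \cref{lem:F_L1_moment}, is already done, and the present lemma is just the continuous repackaging of the pure discrete bound via a trivial change of variables.
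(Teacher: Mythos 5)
Your proposal is correct and matches the paper's own proof: both decompose $[0,1]$ into the $g^{\lambda}$ subintervals of length $g^{-\lambda}$, swap the finite sum with the integral, and apply the uniform discrete bound \cref{lem:F_L1_moment:pure} pointwise, the only difference being the cosmetic choice of change of variables. Nothing further is needed.
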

\begin{proof}
By decomposing the integral, we have
\begin{align}
\int_{0}^{1}
|F_{\lambda}^{[j]}(\beta)|d\beta
&=
\sum_{0\le h<g^{\lambda}}
\int_{\frac{h}{g^{\lambda}}}^{\frac{h+1}{g^{\lambda}}}
|F_{\lambda}^{[j]}(\beta)|d\beta\\
&=
\sum_{0\le h<g^{\lambda}}
\int_{0}^{\frac{1}{g^{\lambda}}}
\biggl|F_{\lambda}^{[j]}\biggl(\frac{h+\beta g^{\lambda}}{g^{\lambda}}\biggr)\biggr|d\beta
=
\int_{0}^{\frac{1}{g^{\lambda}}}
\biggl(
\sum_{0\le h<g^{\lambda}}
\biggl|F_{\lambda}^{[j]}\biggl(\frac{h+\beta g^{\lambda}}{g^{\lambda}}\biggr)\biggr|
\biggr)d\beta.
\end{align}
Then, \cref{lem:F_L1_moment} gives the result.
\end{proof}

\begin{lemma}
\label{lem:continous_L1_deriv}
For $g\in\mathbb{Z}_{\ge2}$, $\bm{\alpha}\in\mathscr{A}_{g}$, $\lambda,j\in\mathbb{Z}_{\ge0}$, we have
\[
\int_{0}^{1}
\biggl|\frac{\partial}{\partial\beta}F_{\lambda}^{[j]}(\beta)\biggr|d\beta
\ll
g^{\eta_{g}\lambda},
\]
where the implicit constant is absolute.
\end{lemma}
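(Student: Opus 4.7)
The plan is to differentiate $F_{\lambda}^{[j]}(\beta)$ directly, use Abel summation to convert the resulting linear factor $n$ into partial sums, and then apply \cref{lem:sum_cleanup} and \cref{lem:continous_L1} to bound each partial sum in $L^{1}$. Differentiating \cref{def:F_lambda} gives
\[
\frac{\partial}{\partial\beta} F_{\lambda}^{[j]}(\beta)
=
\frac{-2\pi i}{g^{\lambda}}
\sum_{0\le n<g^{\lambda}} n\cdot e\bigl(f_{\lambda}^{[j]}(n)-\beta n\bigr).
\]
Setting $S(M)\coloneqq\sum_{0\le n<M}e(f_{\lambda}^{[j]}(n)-\beta n)$ and applying Abel summation, I rewrite this inner sum as $(g^{\lambda}-1)S(g^{\lambda})-\sum_{M=1}^{g^{\lambda}-1}S(M)$, so the problem reduces to controlling $\int_{0}^{1}|S(M)|d\beta$ uniformly in $M$.

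For each $M\in\{1,\ldots,g^{\lambda}\}$ I bound $|S(M)|$ via \cref{lem:sum_cleanup} applied with $L=\lambda$ and $x=M$. Since the inner $f_{\mu}^{[j]}$ appearing in \cref{lem:sum_cleanup} differs from $f_{\lambda}^{[j]}$ only by a constant on $\{0\le n<g^{\mu}\}$ (the unused top digits being all zero), the absolute value is unaffected and I obtain
\[
|S(M)|
\le
(g-1)\sum_{0\le\mu\le\log_{g}\lceil M\rceil}g^{\mu}\bigl|F_{\mu}^{[j]}(\beta)\bigr|.
\]
Integrating over $\beta\in[0,1]$ and applying \cref{lem:continous_L1} termwise then yields
\[
\int_{0}^{1}|S(M)|d\beta
\ll
\sum_{0\le\mu\le\log_{g}\lceil M\rceil}g^{\mu}\cdot g^{-(1-\eta_{g})\mu}
=
\sum_{0\le\mu\le\log_{g}\lceil M\rceil}g^{\eta_{g}\mu}
\ll
M^{\eta_{g}},
\]
where the geometric series is dominated by its top term thanks to $\eta_{g}>0$ (indeed $\eta_{g}<1/2$ by \cref{eta_bound}).

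Combining these estimates with the Abel-summation decomposition, the triangle inequality yields
\[
\int_{0}^{1}\biggl|\frac{\partial}{\partial\beta}F_{\lambda}^{[j]}(\beta)\biggr|d\beta
\ll
\frac{1}{g^{\lambda}}
\biggl(g^{\lambda}\cdot g^{\eta_{g}\lambda}+\sum_{M=1}^{g^{\lambda}}M^{\eta_{g}}\biggr)
\ll
g^{\eta_{g}\lambda},
\]
using $\sum_{M\le g^{\lambda}}M^{\eta_{g}}\ll g^{(1+\eta_{g})\lambda}/(1+\eta_{g})$ by direct comparison with an integral. The main obstacle is the balance between the $L^{1}$-gain of \cref{lem:continous_L1} and the weight $g^{\mu}$ produced by the cleanup estimate: the condition $\eta_{g}\in(0,1)$ is exactly what makes the resulting sum $\sum_{\mu}g^{\eta_{g}\mu}$ concentrate at its top term $\mu\approx\log_{g}M$, which is precisely the size needed to cancel the $g^{\lambda}$ prefactor introduced by Abel summation.
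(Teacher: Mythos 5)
Your proposal is correct and follows essentially the same route as the paper: differentiate $F_{\lambda}^{[j]}$, apply partial (Abel) summation, bound the resulting partial sums via \cref{lem:sum_cleanup}, and integrate using \cref{lem:continous_L1} together with $\eta_{g}>0$ to sum the geometric series. The only cosmetic difference is that the paper replaces each $S(M)$ by $\sup_{0\le x\le g^{\lambda}}|S(x)|$ before integrating, whereas you keep the full sum over $M$ and bound $\sum_{M\le g^{\lambda}}M^{\eta_{g}}$ directly; both yield the same bound $\ll g^{\eta_{g}\lambda}$.
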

\begin{proof}
Note that we have
\[
\frac{\partial}{\partial\beta}F_{\lambda}^{[j]}(\beta)
=
-\frac{2\pi i}{g^{\lambda}}
\sum_{0\le n<g^{\lambda}}
ne(f_{\lambda}^{[j]}(n)-\beta n).
\]
By using the partial summation and \cref{lem:sum_cleanup}, we get
\begin{align}
\biggl|\frac{\partial}{\partial\beta}F_{\lambda}^{[j]}(\beta)\biggr|
&\ll
\sup_{0\le x\le g^{\lambda}}
\biggl|
\sum_{0\le n<x}
e(f_{\lambda}^{[j]}(n)-\beta n)
\biggr|
\ll
\sum_{0\le\ell\le\lambda}
g^{\ell}|F_{\ell}^{[j]}(\beta)|
\end{align}
Thus, by using \cref{lem:continous_L1}, we get
\[
\int_{0}^{1}
\biggl|\frac{\partial}{\partial\beta}F_{\lambda}^{[j]}(\beta)\biggr|d\beta
=
\sum_{0\le\ell\le\lambda}
g^{\ell}
\int_{0}^{1}
|F_{\ell}^{[j]}(\beta)|
d\beta
\ll
\sum_{0\le\ell\le\lambda}
g^{\eta_{g}\ell}
\ll
g^{\eta_{g}\lambda},
\]
where we used $\eta_{g}>0.2075187$ in the last step.
This gives the assertion.
\end{proof}

\begin{lemma}[Galllagher--Sobolev inequality]
\label{lem:Gallagher_Sobolev}
Let $f\colon[0,1]\to\mathbb{C}$ be a function of class $C^{1}$ of period $1$,
$\delta>0$ and $(\alpha_{i})_{i=1}^{R}$ be a sequence of real numbers
which is $\delta$-spaced $\mod{1}$, i.e.\ 
\[
\|\alpha_{i}-\alpha_{j}\|\ge\delta
\quad\text{for any}\quad
i,j\in\{1,\ldots,R\}
\ \text{with}\ 
i\neq j.
\]
We then have
\[
\sum_{i=1}^{R}|f(\alpha_{i})|
\le
\frac{1}{\delta}\int_{0}^{1}|f(\alpha)|d\alpha
+
\frac{1}{2}
\int_{0}^{1}|f'(\alpha)|d\alpha.
\]
\end{lemma}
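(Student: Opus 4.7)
The plan is to prove this by a standard Sobolev-type localization argument: around each point $\alpha_i$, bound $|f(\alpha_i)|$ by an average of $|f|$ over a short interval plus an average of $|f'|$, and then exploit the $\delta$-spacing to sum over $i$ without overcounting.

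First, I would set $I_{i} \coloneqq [\alpha_{i}-\delta/2,\alpha_{i}+\delta/2]$ for $i=1,\ldots,R$. The hypothesis $\|\alpha_{i}-\alpha_{j}\|\ge\delta$ mod $1$ implies that, viewed as subsets of $\mathbb{R}/\mathbb{Z}$, the intervals $I_{i}$ have pairwise disjoint interiors; combined with the periodicity of $f$, this will let me conclude $\sum_{i}\int_{I_{i}}|f|\le\int_{0}^{1}|f|$ and similarly for $|f'|$ at the end.

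The core pointwise step is that for each $y\in I_{i}$, the fundamental theorem of calculus gives
\[
|f(\alpha_{i})|\le|f(y)|+\biggl|\int_{y}^{\alpha_{i}}|f'(t)|\,dt\biggr|.
\]
Integrating over $y\in I_{i}$ and swapping the order of integration in the double integral on the right, each point $t\in I_{i}$ is weighted by the length of the $y$-interval between $t$ and the appropriate endpoint of $I_{i}$, which is at most $\delta/2$. This produces
\[
\delta\,|f(\alpha_{i})|\le\int_{I_{i}}|f(y)|\,dy+\frac{\delta}{2}\int_{I_{i}}|f'(t)|\,dt,
\]
so dividing by $\delta$ yields
\[
|f(\alpha_{i})|\le\frac{1}{\delta}\int_{I_{i}}|f(y)|\,dy+\frac{1}{2}\int_{I_{i}}|f'(t)|\,dt.
\]

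Finally, I would sum over $i=1,\ldots,R$; the disjointness of the $I_{i}$ mod $1$ together with the period-$1$ assumption on $f$ collapses $\sum_{i}\int_{I_{i}}$ into an integral bounded by $\int_{0}^{1}$, giving exactly the stated inequality. The only delicate point is the factor $\tfrac{1}{2}$, which is gained precisely from the symmetric choice of $I_{i}$ around $\alpha_{i}$ combined with the Fubini computation above; a naive interval with $\alpha_{i}$ at an endpoint would give $1$ instead of $\tfrac{1}{2}$, so centering is the key quantitative idea.
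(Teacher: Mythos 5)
Your proof is correct. Note that the paper does not prove this lemma at all: it simply cites Lemma~1.2 of Montgomery's \emph{Topics in Multiplicative Number Theory}, and your argument is exactly the standard proof of that result --- write $f(\alpha_{i})=f(y)+\int_{y}^{\alpha_{i}}f'(t)\,dt$, average over $y$ in the interval $I_{i}$ of length $\delta$ centered at $\alpha_{i}$, use Fubini to see that each $t\in I_{i}$ carries weight at most $\delta/2$ (this is where centering buys the factor $\tfrac12$, as you say), and then sum over $i$ using the fact that the $I_{i}$ have pairwise disjoint interiors modulo $1$. So your write-up supplies a self-contained proof where the paper defers to a reference, but it is not a genuinely different route. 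Two small points if you were to write it out in full: the summation step also uses that the $I_{i}$ fit into a single period, i.e.\ $R\delta\le 1$, which for $R\ge2$ follows from the spacing hypothesis by sorting the points modulo $1$ and summing the $R$ gaps; and for $R=1$ (where the spacing hypothesis is vacuous) one needs $\delta\le1$ for the inequality to make sense at all --- this is implicit in the statement and harmless in the paper's application, where $\delta=(2M)^{-2}$.
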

\begin{proof}
See Lemma~1.2 of \cite[p.~2]{Montgomery:Topics}.
\end{proof}

\begin{lemma}
\label{lem:hybrid_bound}
For $g\in\mathbb{Z}_{\ge2}$, $\bm{\alpha}\in\mathscr{A}_{g}$, $\lambda,j\in\mathbb{Z}_{\ge0}$ and $M\ge1$,
by writing $\mu\coloneqq[\frac{\log M}{\log g}]$,
we have
\[
\sum_{M\le m\le 2M}
\sum_{\substack{
0\le k<m\\
(k,m)=1
}}
\biggl|F_{\lambda}^{[j]}\biggl(\frac{k}{m}\biggr)\biggr|
\ll
\left\{
\begin{array}{>{\displaystyle}cl}
Mg^{-(\frac{1}{2}-\eta_{g})2\mu-\sigma_{\lambda-2\mu}^{[j+2\mu]}(\bm{\alpha})}&\text{if $2\mu\le\lambda$},\\[2mm]
M^{2}g^{-(1-\eta_{g})\lambda}&\text{if $2\mu>\lambda$},
\end{array}
\right.
\]
where the implicit constant depends only on $g$.
\end{lemma}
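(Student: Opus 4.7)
The plan is to combine the Gallagher--Sobolev inequality (\cref{lem:Gallagher_Sobolev}) with a product decomposition of $F_{\lambda}^{[j]}$ obtained by iterating \cref{lem:F_recursion}. The basic input is that the reduced fractions $k/m$ with $M\le m\le 2M$ and $(k,m)=1$ are $\delta$-spaced mod~$1$ with $\delta\ge 1/(4M^{2})$, since two distinct such fractions differ by at least $1/(m_{1}m_{2})\ge 1/(4M^{2})$.

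When $2\mu>\lambda$, I would apply \cref{lem:Gallagher_Sobolev} directly to $\beta\mapsto F_{\lambda}^{[j]}(\beta)$ with this spacing, so that
\[
\sum_{M\le m\le 2M}\sum_{\substack{0\le k<m\\(k,m)=1}}\biggl|F_{\lambda}^{[j]}\biggl(\frac{k}{m}\biggr)\biggr|
\ll
M^{2}\int_{0}^{1}|F_{\lambda}^{[j]}(\beta)|d\beta
+
\int_{0}^{1}|\partial_{\beta}F_{\lambda}^{[j]}(\beta)|d\beta.
\]
\cref{lem:continous_L1} and \cref{lem:continous_L1_deriv} then give the bound $\ll M^{2}g^{-(1-\eta_{g})\lambda}+g^{\eta_{g}\lambda}$. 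Since $M\asymp g^{\mu}$ and $2\mu\ge\lambda+1$ forces $M^{2}\ge g^{\lambda}$, the first term dominates, yielding the stated $M^{2}g^{-(1-\eta_{g})\lambda}$.

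When $2\mu\le\lambda$, I would first decompose $F_{\lambda}^{[j]}$ by iterating \cref{lem:F_recursion} a total of $2\mu$ times and recognizing the resulting product as $|F_{2\mu}^{[j]}|$ via \cref{lem:F_product_formula} together with the identity $\phi_{i}^{[j]}=\phi_{i+j}$, giving
\[
|F_{\lambda}^{[j]}(\beta)|
=
|F_{\lambda-2\mu}^{[j+2\mu]}(\beta g^{2\mu})|
\cdot
|F_{2\mu}^{[j]}(\beta)|.
\]
The first factor is bounded uniformly in $\beta$ by \cref{lem:L_infty_general} by $\ll g^{-\sigma_{\lambda-2\mu}^{[j+2\mu]}(\bm{\alpha})}$, using periodicity so that the shift $\beta\mapsto\beta g^{2\mu}$ is harmless. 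For the remaining sum of $|F_{2\mu}^{[j]}(k/m)|$ over Farey fractions, I apply \cref{lem:Gallagher_Sobolev} together with \cref{lem:continous_L1} and \cref{lem:continous_L1_deriv}, yielding $\ll M^{2}g^{-(1-\eta_{g})2\mu}+g^{\eta_{g}2\mu}$. Since $M\asymp g^{\mu}$, both terms are $\asymp g^{2\eta_{g}\mu}\asymp Mg^{-(1/2-\eta_{g})2\mu}$, and multiplying the two contributions gives the desired bound.

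The main point to verify carefully is the factorization and the calibration of the cutoff at $2\mu$: the two factors must be handled by completely different methods ($L^{\infty}$ savings driven by $\sigma$, versus Farey-type equidistribution driven by Gallagher--Sobolev), and it is precisely the threshold $2\mu=2[\log M/\log g]$ that matches the Farey spacing $\asymp M^{-2}$ to the $L^{1}$ bound $g^{-(1-\eta_{g})2\mu}$, so that the $M^{2}g^{-(1-\eta_{g})2\mu}$ and $g^{\eta_{g}2\mu}$ terms in the Gallagher--Sobolev estimate coincide. A related technical check is that the derivative integral bound in \cref{lem:continous_L1_deriv} grows only as $g^{\eta_{g}\lambda}$, which is needed to ensure it does not overwhelm the Farey-spacing main term in either case.
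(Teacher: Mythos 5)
Your proposal is correct and follows essentially the same route as the paper: factorize $|F_{\lambda}^{[j]}|$ at the cutoff $\lambda_{1}=2\mu$ (the paper does this directly from \cref{lem:F_product_formula}, with a general parameter $\lambda_{1}$ later set to $2\mu$ or $\lambda$), bound the tail factor by the $L^{\infty}$ estimate of \cref{lem:L_infty_general}, and handle the Farey sum of the head factor via \cref{lem:Gallagher_Sobolev} combined with \cref{lem:continous_L1} and \cref{lem:continous_L1_deriv}. The case analysis at $2\mu\lessgtr\lambda$ and the calibration $M\asymp g^{\mu}$ match the paper's choices exactly.
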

\begin{proof}
Take a parameter $\lambda_{1}\in[0,\lambda]\cap\mathbb{Z}$ chosen later.
By \cref{lem:F_product_formula}, we have
\begin{align}
\biggl|F_{\lambda}^{[j]}\biggl(\frac{k}{m}\biggr)\biggr|
&=
\frac{1}{g^{\lambda_{1}}}
\prod_{0\le i<\lambda_{1}}
\phi_{i}^{[j]}\biggl(\frac{k}{m}g^{i}\biggr)
\times
\frac{1}{g^{\lambda-\lambda_{1}}}
\prod_{\lambda_{1}\le i<\lambda}
\phi_{i}^{[j]}\biggl(\frac{k}{m}g^{i}\biggr)\\
&=
\frac{1}{g^{\lambda_{1}}}
\prod_{0\le i<\lambda_{1}}
\phi_{i}^{[j]}\biggl(\frac{k}{m}g^{i}\biggr)
\times
\frac{1}{g^{\lambda-\lambda_{1}}}
\prod_{0\le i<\lambda-\lambda_{1}}
\phi_{i}^{[j+\lambda_{1}]}\biggl(\frac{k}{m}g^{\lambda_{1}}\cdot g^{i}\biggr)\\
&=
\biggl|F_{\lambda_{1}}^{[j]}\biggl(\frac{k}{m}\biggr)\biggr|
\biggl|F_{\lambda-\lambda_{1}}^{[j+\lambda_{1}]}\biggl(\frac{k}{m}g^{\lambda_{1}}\biggr)\biggr|.
\end{align}
By using \cref{lem:L_infty_general}, we thus have
\[
S
\coloneqq
\sum_{M\le m\le 2M}
\sum_{\substack{
0\le k<m\\
(k,m)=1
}}
\biggl|F_{\lambda}^{[j]}\biggl(\frac{k}{m}\biggr)\biggr|
\ll
g^{-\sigma_{\lambda-\lambda_{1}}^{[j+\lambda_{1}]}(\bm{\alpha})}
\sum_{M\le m\le 2M}
\sum_{\substack{
0\le k<m\\
(k,m)=1
}}
\biggl|F_{\lambda_{1}}^{[j]}\biggl(\frac{k}{m}\biggr)\biggr|.
\]
Since the reduced fractions
\[
\frac{k}{m}
\quad\text{for}\quad
k,m\in\mathbb{Z}
\ \text{with}\ 
M<m\le 2M,\ 
0\le k<m,\ 
(k,m)=1
\]
are $(2M)^{-2}$-spaced, the Gallagher--Sobolev inequality (\cref{lem:Gallagher_Sobolev}) gives
\begin{align}
S
\ll
g^{-\sigma_{\lambda-\lambda_{1}}^{[j+\lambda_{1}]}(\bm{\alpha})}
\biggl(
M^{2}
\int_{0}^{1}|F_{\lambda_{1}}^{[j]}(\beta)|d\beta
+
\int_{0}^{1}
\biggl|\frac{\partial}{\partial\beta}F_{\lambda_{1}}^{[j]}(\beta)\biggr|d\beta
\biggr).
\end{align}
By using \cref{lem:continous_L1} and \cref{lem:continous_L1_deriv}, we further have
\begin{equation}
\label{lem:hybrid_bound:prefinal}
S
\ll
g^{\eta_{g}\lambda_{1}-\sigma_{\lambda-\lambda_{1}}^{[j+\lambda_{1}]}(\bm{\alpha})}
(M^{2}g^{-\lambda_{1}}+1).
\end{equation}
When $2\mu\le\lambda$, by taking $\lambda_{1}$ by $\lambda_{1}\coloneqq2\mu\in[0,\lambda]\cap\mathbb{Z}$
in \cref{lem:hybrid_bound:prefinal} so that
\[
g^{\lambda_{1}}\asymp M^{2}
\quad\text{and}\quad
g^{\eta_{g}\lambda_{1}}
=
g^{\frac{1}{2}\lambda_{1}}
\cdot
g^{-(\frac{1}{2}-\eta_{g})\lambda_{1}}
\asymp
M
g^{-(\frac{1}{2}-\eta_{g})2\mu},
\]
we obtain
\begin{equation}
\label{lem:hybrid_bound:small_mu}
S
\ll
M
g^{-(\frac{1}{2}-\eta_{g})2\mu-\sigma_{\lambda-2\mu}^{[j+2\mu]}(\bm{\alpha})}.
\end{equation}
When $\lambda<2\mu$, we have $M^{2}g^{-\lambda}\gg1$
and so, by taking $\lambda_{1}$ by $\lambda_{1}\coloneqq\lambda$
in \cref{lem:hybrid_bound:prefinal}, we obtain
\begin{equation}
\label{lem:hybrid_bound:large_mu}
S
\ll
M^{2}
g^{-(1-\eta_{g})\lambda}.
\end{equation}
By combining \cref{lem:hybrid_bound:small_mu} and \cref{lem:hybrid_bound:large_mu},
we obtain the result.
\end{proof}

\section{The Type I sum estimate}
\label{sec:TypeI}
As usual, we need to estimate bilinear sums called the Type I and Type II sums.
In this section, we prepare the Type I sum estimate for weakly digital functions.

We start with a small lemma on $\sigma_{\lambda}^{[j]}(\bm{\alpha})$.
\begin{lemma}
\label{lem:sigma_monotonicity}
Let $g\in\mathbb{Z}_{\ge2}$, $\bm{\alpha}\in\mathscr{A}_{g}$.
\begin{enumerate}[]
\item\label{lem:sigma_monotonicity:one_shift}
For $\lambda\in\mathbb{N}$, $j\in\mathbb{Z}_{\ge0}$, we have
\[
\sigma_{\lambda}^{[j]}(\bm{\alpha})-\frac{\log 2}{4g^{3}(\log g)^{2}}
\le
\sigma_{\lambda-1}^{[j+1]}(\bm{\alpha})
\le
\sigma_{\lambda}^{[j]}(\bm{\alpha}).
\]
\item\label{lem:sigma_monotonicity:with_eta}
For $\lambda,j\in\mathbb{Z}_{\ge0}$ and $A\ge\frac{\log 2}{\log g}$, the function
\[
f\colon
[0,\lambda]\cap\mathbb{Z}
\to
\mathbb{R}
\semicolon
\mu
\mapsto
A\biggl(\frac{1}{2}-\eta_{g}\biggr)\mu+\sigma_{\lambda-\mu}^{[j+\mu]}(\bm{\alpha})
\]
is increasing.
\item\label{lem:sigma_monotonicity:with_lambda}
For $A\in\mathbb{R}$ with $A\ge\frac{\log 2}{4g^{3}(\log g)^{2}}$ and $j\in\mathbb{Z}_{\ge0}$, the function
\[
f\colon
\mathbb{Z}_{\ge0}
\to
\mathbb{R}
\semicolon
\lambda\mapsto A\lambda-\sigma_{\lambda}^{[j]}(\bm{\alpha})
\]
is increasing.
\end{enumerate}
\end{lemma}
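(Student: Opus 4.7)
The plan rests on a single observation: writing out definition \cref{def:gamma} and remembering $\alpha_i^{[j]} = \alpha_{i+j}$, the quantity $\gamma_i^{[j]}(\bm{\alpha})$ depends only on the pair $(\alpha_{i+j}, \alpha_{i+j+1})$, so one has the identity $\gamma_i^{[j]}(\bm{\alpha}) = \gamma_{i+j}(\bm{\alpha})$. Hence $\sigma_\lambda^{[j]}(\bm{\alpha})$ is nothing more than the partial sum
\[
\sigma_\lambda^{[j]}(\bm{\alpha}) \;=\; \sum_{k=j}^{j+\lambda-1}\gamma_k(\bm{\alpha}),
\]
and the entire lemma becomes a matter of comparing such partial sums, using only the crude upper bound $0 \le \gamma_k(\bm{\alpha}) < \frac{\log 2}{4g^{3}(\log g)^{2}}$ already established in \cref{gamma_bound}.

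For \cref{lem:sigma_monotonicity:one_shift}, the telescoping identity gives
\[
\sigma_\lambda^{[j]}(\bm{\alpha}) - \sigma_{\lambda-1}^{[j+1]}(\bm{\alpha}) \;=\; \gamma_j(\bm{\alpha}),
\]
and both desired inequalities follow immediately from $0 \le \gamma_j(\bm{\alpha}) < \frac{\log 2}{4g^{3}(\log g)^{2}}$.

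For \cref{lem:sigma_monotonicity:with_eta}, I compute the forward difference
\[
f(\mu+1)-f(\mu) \;=\; A\Bigl(\tfrac{1}{2}-\eta_g\Bigr) + \sigma_{\lambda-\mu-1}^{[j+\mu+1]}(\bm{\alpha}) - \sigma_{\lambda-\mu}^{[j+\mu]}(\bm{\alpha}) \;=\; A\Bigl(\tfrac{1}{2}-\eta_g\Bigr) - \gamma_{j+\mu}(\bm{\alpha})
\]
by \cref{lem:sigma_monotonicity:one_shift}. I then invoke the upper bound $\eta_g < \tfrac{1}{2}-\tfrac{1}{4g^{3}\log g}$ from \cref{eta_bound} together with the hypothesis $A \ge \tfrac{\log 2}{\log g}$ to get $A(\tfrac{1}{2}-\eta_g) \ge \tfrac{\log 2}{4g^{3}(\log g)^{2}} \ge \gamma_{j+\mu}(\bm{\alpha})$, so $f(\mu+1)-f(\mu) \ge 0$.

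Finally, \cref{lem:sigma_monotonicity:with_lambda} is even more direct: the partial-sum representation yields $\sigma_{\lambda+1}^{[j]}(\bm{\alpha}) - \sigma_\lambda^{[j]}(\bm{\alpha}) = \gamma_{j+\lambda}(\bm{\alpha})$, so $f(\lambda+1) - f(\lambda) = A - \gamma_{j+\lambda}(\bm{\alpha}) \ge A - \tfrac{\log 2}{4g^{3}(\log g)^{2}} \ge 0$. There is no genuine obstacle here; the lemma is a pure bookkeeping statement, and the only mild care needed is matching the constants coming from \cref{gamma_bound} and \cref{eta_bound} to the thresholds $\tfrac{\log 2}{\log g}$ and $\tfrac{\log 2}{4g^{3}(\log g)^{2}}$ assumed on $A$.
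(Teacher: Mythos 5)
Your proposal is correct and follows essentially the same route as the paper: the shift identity $\gamma_i^{[j]}(\bm{\alpha})=\gamma_{i+j}(\bm{\alpha})$ (the paper's reindexing $\gamma_i^{[j+1]}=\gamma_{i+1}^{[j]}$) reduces everything to telescoping partial sums of $\gamma_k(\bm{\alpha})$, bounded via \cref{gamma_bound} and \cref{eta_bound} exactly as in the paper's proof. The only cosmetic difference is that you conclude $f(\mu+1)-f(\mu)\ge0$, while the strict inequalities in \cref{eta_bound} and \cref{gamma_bound} in fact give $>0$ as the paper states.
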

\begin{proof}
\leavevmode\par\medskip

\proofitem{lem:sigma_monotonicity:one_shift}
By the definition \cref{def:sigma_j_alpha} of $\sigma_{\lambda}^{[j]}(\bm{\alpha})$
and \cref{gamma_bound}, we have
\[
\sigma_{\lambda-1}^{[j+1]}(\bm{\alpha})
=
\sum_{0\le i<\lambda-1}
\gamma_{i}^{[j+1]}(\bm{\alpha})
=
\sum_{0\le i<\lambda-1}
\gamma_{i+1}^{[j]}(\bm{\alpha})
=
\sum_{1\le i<\lambda}
\gamma_{i}^{[j]}(\bm{\alpha})
=
\sigma_{\lambda}^{[j]}(\bm{\alpha})
-
\gamma_{0}^{[j]}(\bm{\alpha}).
\]
By using \cref{gamma_bound}, we obtain \cref{lem:sigma_monotonicity:one_shift}.
\medskip

\proofitem{lem:sigma_monotonicity:with_eta}
By \cref{lem:sigma_monotonicity:one_shift} proven above,
for $\mu\in[0,\lambda-1]\cap\mathbb{Z}$, we have
\begin{align}
\label{lem:sigma_monotonicity:with_eta:diff}
f(\mu+1)-f(\mu)
&=
A\biggl(\frac{1}{2}-\eta_{g}\biggr)
+
\sigma_{\lambda-(\mu+1)}^{[j+\mu+1]}(\bm{\alpha})
-
\sigma_{\lambda-\mu}^{[j+\mu]}(\bm{\alpha})\\
&\ge
\frac{\log 2}{\log g}\biggl(\frac{1}{2}-\eta_{g}\biggr)
-
\frac{\log 2}{4g^{3}(\log g)^{2}}
>0
\end{align}
by \cref{eta_bound}.
This proves \cref{lem:sigma_monotonicity:with_eta}.
\medskip

\proofitem{lem:sigma_monotonicity:with_lambda}
By \cref{gamma_bound} and \cref{def:sigma_j_alpha}, we have
\[
f(\lambda+1)-f(\lambda)
=
A-\gamma_{\lambda}^{[j]}(\bm{\alpha})
>0,
\]
and so \cref{lem:sigma_monotonicity:with_lambda} follows.
\end{proof}

\begin{lemma}
\label{lem:TypeI}
For $g\in\mathbb{Z}_{\ge2}$, $\bm{\alpha}\in\mathscr{A}_{g}$,
$L\in\mathbb{N}$, $x,M\in\mathbb{R}$ with $2\le x\le g^{L}$, we have
\[
S_{\mathrm{I}}
\coloneqq
\sum_{1\le m\le M}
\sup_{1\le t\le x/m}
\biggl|
\sum_{1\le n\le t}
e(f_{L}(mn))
\biggr|
\ll
xg^{-\kappa_{\mathrm{I}}}(\log x)^{2}
\]
provided $M\le x^{\frac{1}{2}}$, where
\begin{equation}
\label{lem:TypeI:def:kappa}
\kappa_{\mathrm{I}}
\coloneqq
\sigma_{\xi_{\mathrm{I}}}(\bm{\alpha})
\quad\text{with}\quad
\xi_{\mathrm{I}}
\coloneqq
\biggl[\frac{\log x}{\log g}\biggr]
\end{equation}
and the implicit constant depends only on $g$.
\end{lemma}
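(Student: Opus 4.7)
The plan is to combine discrete Fourier analysis modulo $m$ with the reduction in Lemma~\ref{lem:sum_cleanup} and the hybrid bound of Lemma~\ref{lem:hybrid_bound}. The target exponent $\kappa_{\mathrm{I}}=\sigma_{\xi_{\mathrm{I}}}(\bm{\alpha})$ will emerge from the $L^{\infty}$-gain attached to $|F_\lambda|$ at height $\lambda\approx\xi_{\mathrm{I}}$.

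First, setting $N=mn$ and detecting the divisibility $m\mid N$ by additive characters,
\[
\sum_{1\le n\le t}e(f_L(mn))
=\frac{1}{m}\sum_{0\le b<m}\sum_{0\le N\le[mt]}e(f_L(N)+(b/m)N)+O(1),
\]
and apply Lemma~\ref{lem:sum_cleanup} with $j=0$ and $\beta=-b/m$ to the inner sum (whose length is $\le[mt]+1\le x+1$; a boundary case $mt=g^L$ contributes only $O(1)$). Since $\log([mt]+1)/\log g\le\log(x+1)/\log g$, taking $\sup_{t\le x/m}$ and summing over $m\le M$ reduces the lemma to estimating
\[
\sum_{0\le\lambda\le\xi_{\mathrm{I}}}g^\lambda T_\lambda
\quad\text{with}\quad
T_\lambda\coloneqq\sum_{1\le m\le M}\frac{1}{m}\sum_{0\le b<m}|F_\lambda(-b/m)|,
\]
plus a residual of $O(M)$.

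Next, to handle $T_\lambda$, reduce to primitive fractions by writing $(b,m)=d$, $b=dk$, $m=dm'$, and split the $m'$-sum dyadically into ranges $M_1<m'\le 2M_1$. Each dyadic piece, divided by $M_1$, is controlled via Lemma~\ref{lem:hybrid_bound} with $\mu_1=[\log M_1/\log g]$: in the regime $2\mu_1\le\lambda$, the resulting exponent $(\tfrac{1}{2}-\eta_g)2\mu_1+\sigma_{\lambda-2\mu_1}^{[2\mu_1]}(\bm{\alpha})$ dominates $\sigma_\lambda(\bm{\alpha})$ by Lemma~\ref{lem:sigma_monotonicity}(ii) applied with $A=1$ and $j=0$, giving $\ll g^{-\sigma_\lambda(\bm{\alpha})}$; in the regime $2\mu_1>\lambda$ it yields $M_1g^{-(1-\eta_g)\lambda}$. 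Summing geometrically in $M_1$ (Case~A gives $O(\lambda)$ dyadic terms, Case~B gives a geometric series dominated by $M/d$) and then over $d$ (via $\sum_d d^{-2}\ll 1$ and $\sum_d \log(M/d)/d\ll(\log M)^2$), we obtain
\[
T_\lambda\ll(\log x)^2\,g^{-\sigma_\lambda(\bm{\alpha})}+M\,g^{-(1-\eta_g)\lambda}.
\]

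Finally, multiply by $g^\lambda$ and sum over $\lambda\le\xi_{\mathrm{I}}$. For the first term, \cref{gamma_bound} gives $\gamma_\lambda(\bm{\alpha})<1/20$, so the step increment of $\lambda-\sigma_\lambda(\bm{\alpha})$ is $\ge 19/20$ (in particular, the sequence is strictly increasing as in Lemma~\ref{lem:sigma_monotonicity}(iii) with $A=1$); hence $\sum_\lambda g^{\lambda-\sigma_\lambda(\bm{\alpha})}$ is a geometric-type sum dominated by its top term $g^{\xi_{\mathrm{I}}-\sigma_{\xi_{\mathrm{I}}}(\bm{\alpha})}\asymp xg^{-\kappa_{\mathrm{I}}}$, contributing $\ll(\log x)^2\,xg^{-\kappa_{\mathrm{I}}}$. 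For the second term, $\sum_{\lambda\le\xi_{\mathrm{I}}}g^{\eta_g\lambda}\ll x^{\eta_g}$, giving $\ll Mx^{\eta_g}\le x^{1/2+\eta_g}$; combining \cref{sigma_bound} with \cref{eta_bound} yields $\frac{\log 2}{4g^3(\log g)^2}\le\frac{1}{4g^3\log g}<\frac{1}{2}-\eta_g$, whence $\sigma_{\xi_{\mathrm{I}}}(\bm{\alpha})\log g\le(\tfrac{1}{2}-\eta_g)\log x$, which converts $x^{1/2+\eta_g}$ into $\le xg^{-\kappa_{\mathrm{I}}}$; the residual $O(M)\le O(x^{1/2})$ is absorbed the same way. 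The main obstacle is parameter bookkeeping: aligning the two regimes of Lemma~\ref{lem:hybrid_bound} against the target exponent $\sigma_{\xi_{\mathrm{I}}}(\bm{\alpha})$ and verifying that the two monotonicity assertions in Lemma~\ref{lem:sigma_monotonicity} fire with the correct constants $A$.
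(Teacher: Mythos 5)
Your proposal is correct and follows essentially the same route as the paper's proof: detect the divisibility by additive characters, reduce to complete sums $F_\lambda$ via \cref{lem:sum_cleanup}, classify by the gcd and decompose dyadically, apply the hybrid bound \cref{lem:hybrid_bound} in its two regimes together with \cref{lem:sigma_monotonicity}, and absorb the $Mx^{\eta_g}$ and $O(M)$ terms using \cref{sigma_bound} and \cref{eta_bound}. The only (cosmetic) difference is bookkeeping: you keep $g^{-\sigma_\lambda(\bm{\alpha})}$ and sum $g^{\lambda-\sigma_\lambda(\bm{\alpha})}$ geometrically against its top term at $\lambda=\xi_{\mathrm{I}}$, whereas the paper splits off $g^{\frac12\lambda}$ and compares $\tfrac12\lambda-\sigma_\lambda(\bm{\alpha})$ with $\tfrac12\xi_{\mathrm{I}}-\sigma_{\xi_{\mathrm{I}}}(\bm{\alpha})$ before summing.
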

\begin{proof}
By rewriting $mn$ to $n$, $mt$ to $t$ and taking care of the term $n=0$, we have
\[
S_{\mathrm{I}}
\le
\sum_{1\le m\le M}
\sup_{m\le t\le x}
\biggl|
\sum_{\substack{
0\le n\le t\\
n\equiv 0\ \mod{m}
}}
e(f_{L}(n))
\biggr|
+
M.
\]
By the orthogonality, we have
\begin{align}
S_{\mathrm{I}}
&\le
\sum_{1\le m\le M}
\sup_{m\le t\le x}
\biggl|
\frac{1}{m}
\sum_{0\le k<m}
\sum_{1\le n<t}
e\biggl(f_{L}(n)-\frac{kn}{m}\biggr)
\biggr|
+
M\\
\label{lem:TypeI:after_orthogonality}
&\le
\sum_{1\le m\le M}
\frac{1}{m}
\sum_{0\le k<m}
\sup_{m\le t\le x}
\biggl|
\sum_{1\le n<t}
e\biggl(f_{L}(n)-\frac{kn}{m}\biggr)
\biggr|
+
M.
\end{align}
We may replace $x$ by $[x]$ in \cref{lem:TypeI:after_orthogonality},
and then, by applying \cref{lem:sum_cleanup}, we can bound \cref{lem:TypeI:after_orthogonality} by
\begin{align}
S_{\mathrm{I}}
&\ll
\sum_{0\le\lambda\le\frac{\log x}{\log g}}
\sum_{1\le m\le M}
\frac{1}{m}
\sum_{0\le k<m}
\biggl|
\sum_{0\le n<g^{\lambda}}
e\biggl(f_{\lambda}(n)-\frac{kn}{m}\biggr)
\biggr|
+
M\\
&=
\sum_{0\le\lambda\le\frac{\log x}{\log g}}
\sum_{1\le m\le M}
\frac{g^{\lambda}}{m}
\sum_{0\le k<m}
\biggl|F_{\lambda}\biggl(\frac{k}{m}\biggr)\biggr|
+
M.
\end{align}
By classifying the terms according to the values of $(k,m)=d$, we have
\begin{equation}
\label{lem:TypeI:before_hybrid_bound}
\begin{aligned}
S_{\mathrm{I}}
&\ll
\sum_{0\le\lambda\le\frac{\log x}{\log g}}
\sum_{1\le d\le M}
\sum_{\substack{
1\le m\le M\\
d\mid m
}}
\frac{g^{\lambda}}{m}
\sum_{\substack{
0\le k<m\\
(k,m)=d
}}
\biggl|F_{\lambda}\biggl(\frac{k}{m}\biggr)\biggr|
+
M\\
&=
\sum_{0\le\lambda\le\frac{\log x}{\log g}}
\sum_{1\le d\le M}
\frac{1}{d}
\sum_{1\le m\le M/d}
\frac{g^{\lambda}}{m}
\sum_{\substack{
0\le k<m\\
(k,m)=1
}}
\biggl|F_{\lambda}\biggl(\frac{k}{m}\biggr)\biggr|
+
M.
\end{aligned}
\end{equation}

In order to bound the inner sum of \cref{lem:TypeI:before_hybrid_bound}, we bound the sum
\[
T
\coloneqq
\sum_{U\le m\le2U}
\frac{g^{\lambda}}{m}
\sum_{\substack{
0\le k<m\\
(k,m)=1}}
\biggl|F_{\lambda}\biggl(\frac{k}{m}\biggr)\biggr|
\]
for $1\le U\le M/d$ with $1\le d\le M$ by using \cref{lem:hybrid_bound}. Write
\[
\mu_{U}
\coloneqq
\biggl[\frac{\log U}{\log g}\biggr].
\]
When $2\mu_{U}\le\lambda$, \cref{lem:hybrid_bound} gives
\[
T
\ll
g^{\lambda}U^{-1}
\sum_{U\le m\le2U}
\sum_{\substack{
0\le k<m\\
(k,m)=1}}
\biggl|F_{\lambda}\biggl(\frac{k}{m}\biggr)\biggr|
\ll
g^{\lambda-(\frac{1}{2}-\eta_{g})2\mu_{U}-\sigma_{\lambda-2\mu_{U}}^{[j+2\mu_{U}]}(\bm{\alpha})}
\]
By \cref{lem:sigma_monotonicity:with_eta} and \cref{lem:sigma_monotonicity:with_lambda}
of \cref{lem:sigma_monotonicity}, this can be further bounded as
\begin{equation}
\label{lem:TypeI:T:large_lambda}
T
\ll
g^{\frac{1}{2}\lambda}g^{\frac{1}{2}\lambda-\sigma_{\lambda}(\bm{\alpha})}
\le
g^{\frac{1}{2}\lambda}g^{\frac{1}{2}\xi_{\mathrm{I}}-\sigma_{\xi_{\mathrm{I}}}(\bm{\alpha})}
\ll
g^{\frac{1}{2}\lambda}
x^{\frac{1}{2}}
g^{-\kappa_{\mathrm{I}}}.
\end{equation}
When $0\le\lambda<2\mu_{U}$, \cref{lem:hybrid_bound} gives
\begin{equation}
\label{lem:TypeI:T:small_lambda}
T
\ll
g^{\lambda}U^{-1}
\sum_{U\le m\le2U}
\sum_{\substack{
0\le k<m\\
(k,m)=1}}
\biggl|F_{\lambda}\biggl(\frac{k}{m}\biggr)\biggr|
\ll
Ug^{\eta_{g}\lambda}
\ll
\frac{M}{d}g^{\eta_{g}\lambda}.
\end{equation}
By \cref{lem:TypeI:T:large_lambda} and \cref{lem:TypeI:T:small_lambda}, in any case, we have
\begin{equation}
\label{lem:TypeI:T}
T
\ll
g^{\frac{1}{2}\lambda}
x^{\frac{1}{2}}
g^{-\kappa_{\mathrm{I}}}
+
\frac{M}{d}g^{\eta_{g}\lambda}.
\end{equation}

On inserting \cref{lem:TypeI:T} into \cref{lem:TypeI:before_hybrid_bound},
by recalling $\eta_{g}>0.2$ as seen in \cref{eta_bound}, we get
\begin{equation}
\label{lem:TypeI:prefinal}
S_{\mathrm{I}}
\ll
(\log x)^{2}
\sum_{0\le\lambda\le\frac{\log x}{\log g}}
(g^{\frac{1}{2}\lambda}
x^{\frac{1}{2}}
g^{-\kappa_{\mathrm{I}}}
+
Mg^{\eta_{g}\lambda})
+M
\ll
(xg^{-\kappa_{\mathrm{I}}}
+
Mx^{\eta_{g}})
(\log x)^{2}.
\end{equation}
By using \cref{sigma_bound} and \cref{eta_bound}, we have
\[
\kappa_{\mathrm{I}}
\le
\frac{\xi_{\mathrm{I}}}{4g^{3}\log g}
\le
\frac{\log x}{\log g}\biggl(\frac{1}{2}-\eta_{g}\biggr).
\]
Thus, the assumption $M\le x^{\frac{1}{2}}$ gives
\[
Mx^{\eta_{g}}
\le
x^{\frac{1}{2}+\eta_{g}}
=
x^{1-(\frac{1}{2}-\eta_{g})}
\le
xg^{-\kappa_{\mathrm{I}}}.
\]
On inserting this estimate into \cref{lem:TypeI:prefinal},
we obtain the result.
\end{proof}

\section{The Type II sum estimate}
\label{sec:TypeII}
For the Type-II sum estimate, we first prepare some known lemmas.

\begin{lemma}[Van der Corput's inequality]
\label{lem:vdC_ineq}
For a sequence of complex numbers $(z_{n})_{n=1}^{N}$ and $R\in\mathbb{N}$,
\[
\biggl|
\sum_{n=1}^{N}z_{n}
\biggr|^{2}
\le
\frac{N+R-1}{R}
\sum_{|r|<R}\biggl(1-\frac{|r|}{R}\biggr)
\sum_{1\le n,n+r\le N}z_{n+r}\overline{z_{n}}.
\]
\end{lemma}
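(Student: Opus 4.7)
The plan is to prove this by the standard duplication trick: artificially introduce a shift parameter, apply Cauchy--Schwarz to replace $|\sum z_n|^2$ by a mean square of block sums, then expand the square and count the multiplicities of the shift differences.

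First I would extend $z_n\coloneqq 0$ for all $n\in\mathbb{Z}$ with $n\notin\{1,\ldots,N\}$. Then for each fixed $r\in\{1,\ldots,R\}$, a reindexing gives
\[
\sum_{n=1}^{N}z_{n}=\sum_{n\in\mathbb{Z}}z_{n+r},
\]
and only those $n$ with $1-R\le n\le N-1$ contribute, an interval of length $N+R-1$. Summing over $r\in\{1,\ldots,R\}$,
\[
R\sum_{n=1}^{N}z_{n}=\sum_{1-R\le n\le N-1}\sum_{r=1}^{R}z_{n+r}.
\]

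Next I would apply the Cauchy--Schwarz inequality to the outer sum of length $N+R-1$, yielding
\[
R^{2}\biggl|\sum_{n=1}^{N}z_{n}\biggr|^{2}\le (N+R-1)\sum_{1-R\le n\le N-1}\biggl|\sum_{r=1}^{R}z_{n+r}\biggr|^{2}.
\]
Expanding the inner square as $\sum_{r,s=1}^{R}z_{n+r}\overline{z_{n+s}}$ and writing $k\coloneqq r-s$, I can swap the order of summation: for each $|k|<R$ there are exactly $R-|k|$ pairs $(r,s)\in\{1,\ldots,R\}^{2}$ with $r-s=k$, and after the change of variable $n\mapsto n-s$ the inner double sum becomes $(R-|k|)\sum_{n\in\mathbb{Z}}z_{n+k}\overline{z_{n}}$, where the support of $z$ restricts the range to $1\le n,n+k\le N$. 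Hence
\[
R^{2}\biggl|\sum_{n=1}^{N}z_{n}\biggr|^{2}\le (N+R-1)\sum_{|k|<R}(R-|k|)\sum_{1\le n,n+k\le N}z_{n+k}\overline{z_{n}}.
\]
Dividing both sides by $R^{2}$ and factoring $1-|k|/R$ from $R-|k|$ gives the claimed inequality.

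The argument is essentially bookkeeping, and the only mild subtlety is keeping track of the correct length of the outer interval ($N+R-1$ rather than $N+R$) and the multiplicity $R-|k|$ of the shift $k$; once these are handled correctly, everything else is a direct application of Cauchy--Schwarz. No ingredients beyond elementary reindexing are needed, so I do not anticipate a serious obstacle here.
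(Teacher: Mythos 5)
Your proof is correct. Note that the paper itself does not prove this lemma; it simply cites Graham--Kolesnik (Lemma~2.5, p.~10), and your argument is precisely the standard proof given there: pad the sequence with zeros, average over the $R$ shifts, apply Cauchy--Schwarz to the outer sum of length $N+R-1$, and then expand and count the multiplicity $R-|k|$ of each shift difference $k$. All the bookkeeping (the interval $1-R\le n\le N-1$ of length $N+R-1$, the change of variable $n\mapsto n-s$, and the division by $R^{2}$ producing the weight $\frac{1}{R}(1-\frac{|k|}{R})$) checks out, and the right-hand side is automatically real since the terms for $k$ and $-k$ are complex conjugates, so the inequality is meaningful as stated. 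No gap.
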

\begin{proof}
See, e.g.\ Graham--Kolesnik~\cite[Lemma~2.5, p.~10]{GrahamKolesnik}.
\end{proof}

\begin{lemma}
\label{lem:truncation}
Let $g\in\mathbb{Z}_{\ge2}$, $r\in\mathbb{Z}_{\ge0}$, $M,N,R\in\mathbb{R}_{\ge1}$ with $0\le r\le R$.
Take $\lambda\in\mathbb{N}$ by
\begin{equation}
\label{lem:truncation:def_lambda}
g^{\lambda-1}\le MR^{2}<g^{\lambda}.
\end{equation}
Assume that
\begin{equation}
\label{lem:truncation:cond}
R\le N^{\frac{1}{2}}
\quad\text{and}\quad
\lambda\le L
\end{equation}
Consider the set
\begin{equation}
\label{lem:truncation:def:E}
\mathscr{E}
\coloneqq
\left\{
(m,n)\in\mathbb{N}^{2}
\midmid
\begin{gathered}
M<m\le 2M,\ N<n\le 2N\\
f_{L}(m(n+r))-f_{L}(mn)
\neq
f_{\lambda}(m(n+r))-f_{\lambda}(mn)
\end{gathered}
\right\}.
\end{equation}
Then, we have a bound
\begin{equation}
\label{lem:truncation:bound}
\#\mathscr{E}
\ll
\frac{MN}{R},
\end{equation}
where the implicit constant depends only on $g$.
\end{lemma}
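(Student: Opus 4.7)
The plan is to use \cref{lem:wdf_additive} to decompose $f_L$ into contributions from the lowest $\lambda$ digits and from digits at position $\lambda$ onwards; this will show that membership in $\mathscr{E}$ forces the integer parts $\lfloor m(n+r)/g^\lambda\rfloor$ and $\lfloor mn/g^\lambda\rfloor$ to differ. A direct lattice-point count, using $g^\lambda>MR^2$ and $R\le N^{1/2}$ at the very end, will then conclude.

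Concretely, I would apply \cref{lem:wdf_additive} with $k=\lambda$ (valid since $\lambda\le L$) to both $mn$ and $m(n+r)$. Writing $mn=g^\lambda q_1+s_1$ and $m(n+r)=g^\lambda q_2+s_2$ with $s_1,s_2\in[0,g^\lambda)$, and noting $f_\lambda(mn)=f_\lambda(s_1)$ and $f_\lambda(m(n+r))=f_\lambda(s_2)$ because $f_\lambda$ only reads the $\lambda$ lowest digits, subtraction gives
\[
f_L(m(n+r))-f_L(mn)-\bigl(f_\lambda(m(n+r))-f_\lambda(mn)\bigr)=f_{L-\lambda}^{[\lambda]}(q_2)-f_{L-\lambda}^{[\lambda]}(q_1).
\]
Consequently $(m,n)\in\mathscr{E}$ forces $q_2\neq q_1$, i.e.\ $\lfloor m(n+r)/g^\lambda\rfloor\neq\lfloor mn/g^\lambda\rfloor$.

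For the count, I would parameterize by the integer $k$ with $mn<kg^\lambda\le mn+mr$; the corresponding $n$ lies in the half-open interval $[kg^\lambda/m-r,\,kg^\lambda/m)$ of length $r$, which contains at most $r\le R$ integers. For fixed $m\in(M,2M]$, the $k$'s producing such an interval meeting $(N,2N]$ themselves form an interval of length $\le m(N+r)/g^\lambda$, hence of cardinality $\ll MN/g^\lambda+MR/g^\lambda+1\ll MN/g^\lambda+1$, where I use $MR/g^\lambda\le 1/R\le 1$ from \cref{lem:truncation:def_lambda}. Summing over $m$,
\[
\#\mathscr{E}\ll\sum_{M<m\le 2M}R\biggl(\frac{MN}{g^\lambda}+1\biggr)\ll\frac{M^2NR}{g^\lambda}+MR.
\]

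Finally, \cref{lem:truncation:def_lambda} gives $M^2NR/g^\lambda<MN/R$, and the hypothesis $R\le N^{1/2}$ in \cref{lem:truncation:cond} gives $MR\le MN/R$, yielding the claim; the case $r=0$ makes $\mathscr{E}$ empty trivially. There is no substantive obstacle here, only arithmetic bookkeeping: the pair of hypotheses $g^\lambda\ge MR^2$ and $R^2\le N$ are calibrated \emph{exactly} so that both pieces $M^2NR/g^\lambda$ and $MR$ fall under the target $MN/R$.
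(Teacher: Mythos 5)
Your argument is correct and follows essentially the same route as the paper: membership in $\mathscr{E}$ forces a multiple $kg^{\lambda}$ to lie in $(mn,\,m(n+r)]$ (the paper sees this directly from the digits $\epsilon_i$, $i\ge\lambda$, rather than via \cref{lem:wdf_additive}, but this is the same observation), and one then counts the triples $(m,k,n)$ exactly as in the paper's proof. The only cosmetic difference is in the final bookkeeping, where you absorb the leftover $MR$ term via $R^{2}\le N$ while the paper first uses $g^{\lambda}\ll MN$ to reduce everything to $M^{2}NR/g^{\lambda}$; both are equivalent uses of the hypotheses.
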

\begin{proof}
For $(m,n)\in\mathbb{N}^{2}$, if there is no $k\in\mathbb{N}$ with
\begin{equation}
\label{lem:truncation:singular}
mn<kg^{\lambda}\le m(n+r),
\end{equation}
i.e.\ if $[\frac{mn}{g^{\lambda}}]=[\frac{m(n+r)}{g^{\lambda}}]$, then we have
\[
\epsilon_{i}(m(n+r))=\epsilon_{i}(mn)
\quad\text{for $i\ge\lambda$}
\]
since
\[
\sum_{i\ge\lambda}\epsilon_{i}(m(n+r))g^{i-\lambda}
=
\biggl[\frac{m(n+r)}{g^{\lambda}}\biggr]
=
\biggl[\frac{mn}{g^{\lambda}}\biggr]
=
\sum_{i\ge\lambda}\epsilon_{i}(mn)g^{i-\lambda}.
\]
Therefore, if there is no $k\in\mathbb{N}$ with \cref{lem:truncation:singular},
then since $\lambda\le L$, we have
\begin{align}
f_{L}(m(n+r))-f_{L}(mn)
&=
\sum_{0\le i<L}
\bigl(
\alpha_{i}(\epsilon_{i}(m(n+r)))
-
\alpha_{i}(\epsilon_{i}(mn))
\bigr)\\
&=
\sum_{0\le i<\lambda}
\bigl(
\alpha_{i}(\epsilon_{i}(m(n+r)))
-
\alpha_{i}(\epsilon_{i}(mn))
\bigr)
=
f_{\lambda}(m(n+r))-f_{\lambda}(mn).
\end{align}
Thus, if $(m,n)\in\mathscr{E}$, there exists $k\in\mathbb{N}$ satisfying \cref{lem:truncation:singular}.
Since such $k$ should satisfy
\[
mN<kg^{\lambda}\le m(2N+r),
\]
we have
\begin{equation}
\label{lem:truncation:prefinal}
\begin{aligned}
\#\mathscr{E}
&\le
\sum_{M<m\le 2M}
\sum_{\frac{mN}{g^{\lambda}}<k\le\frac{m(2N+r)}{g^{\lambda}}}
\sum_{\substack{
N<n\le 2N\\
mn<kg^{\lambda}\le m(n+r)
}}
1\\
&\le
\sum_{M<m\le 2M}
\sum_{\frac{mN}{g^{\lambda}}<k\le\frac{m(2N+r)}{g^{\lambda}}}
\sum_{\frac{kg^{\lambda}}{m}-r\le n<\frac{kg^{\lambda}}{m}}
1\\
&\ll
r\sum_{M<m\le 2M}
\biggl(\frac{m(N+r)}{g^{\lambda}}+1\biggr)
\ll
MR\biggl(\frac{M(N+R)}{g^{\lambda}}+1\biggr).
\end{aligned}
\end{equation}
By \cref{lem:truncation:cond} and \cref{lem:truncation:def_lambda}, we have $g^{\lambda}\asymp MR^{2}\ll MN$.
Thus, we can bound \cref{lem:truncation:prefinal} further as
\[
\#\mathscr{E}
\ll
\frac{M^{2}NR}{g^{\lambda}}
\ll
\frac{MN}{R}
\]
as desired. This completes the proof.
\end{proof}

\begin{lemma}
\label{lem:sin_sum}
For $a\in\mathbb{Z}$, $m\in\mathbb{N}$, $b,M\in\mathbb{R}$
with $d\coloneqq(a,m)$ and $M>0$, we have
\begin{align}
\label{lem:sin_sum:bound}
\sum_{0\le n<m}
\min\biggl(
M,
\frac{1}{|\sin\pi\frac{an+b}{m}|}
\biggr)
\le
d
\min\biggl(
M,
\frac{1}{\sin\pi\frac{d}{m}\|\frac{b}{d}\|}
\biggr)
+
\frac{d}{\sin\frac{\pi d}{2m}}
+
\frac{2m}{\pi}\log\frac{2m}{\pi d}.
\end{align}
\end{lemma}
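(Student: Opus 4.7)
The plan is to reduce the sum to the coprime case, peel off two extremal terms matching the first two parts of the bound, and estimate the remaining bulk via an integral comparison.

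First I write $d=(a,m)$, $a=da'$, $m=dm'$ with $(a',m')=1$. Since $|\sin\pi x|$ has period $1$ in $x$ and $a'n\bmod m'$ runs through $\{0,\ldots,m'-1\}$ exactly $d$ times as $n$ ranges over $\{0,\ldots,m-1\}$, setting $\beta:=b/d$ I get
\[
\sum_{0\le n<m}\min\!\left(M,\frac{1}{|\sin\pi(an+b)/m|}\right)=d\sum_{0\le k<m'}\min\!\left(M,\frac{1}{|\sin\pi(k+\beta)/m'|}\right).
\]
By the $1$-periodicity in $\beta$ and the involution $k\mapsto m'-k$ (equivalently $\beta\mapsto-\beta$), I may then assume $\beta=\|b/d\|\in[0,1/2]$.

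The $k=0$ summand is exactly $\min(M,1/\sin(\pi\beta/m'))$, which after multiplication by $d$ supplies the first term $d\min(M,1/\sin((\pi d/m)\|b/d\|))$. For $k\in\{1,\ldots,m'-1\}$ the summand equals $1/\sin(\pi\delta_k/m')$ with $\delta_k:=\min(k+\beta,m'-k-\beta)\ge 1/2$; the minimum $\delta_{m'-1}=1-\beta$ is attained only at $k=m'-1$. I isolate this single term, whose contribution is at most $1/\sin(\pi/(2m'))$ since $(1-\beta)/m'\ge 1/(2m')$; this gives the second term of the bound after multiplication by $d$.

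For the bulk $k\in\{1,\ldots,m'-2\}$ one has $\delta_k\ge 1$, and the values $\{\delta_k\}$ consist essentially of $\{j-\beta,j+\beta:1\le j\le m'/2\}$ with the already-extracted $\delta=1-\beta$ removed. Applying a standard integral comparison to the decreasing positive function $f(x)=1/\sin(\pi x/m')$ on $[1,m'/2]$ yields
\[
\sum_{k=1}^{m'-2}\frac{1}{\sin(\pi\delta_k/m')}\le 2\int_1^{m'/2}\frac{dx}{\sin(\pi x/m')}=\frac{2m'}{\pi}\log\cot\!\left(\frac{\pi}{2m'}\right)\le\frac{2m'}{\pi}\log\!\left(\frac{2m'}{\pi}\right),
\]
using the antiderivative $\int(\sin u)^{-1}\,du=\log|\tan(u/2)|$ together with $\cot\theta\le 1/\theta$ on $(0,\pi/2)$. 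Multiplying by $d$ then recovers the third term $(2m/\pi)\log(2m/(\pi d))$ of the stated bound.

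The main obstacle is tracking the precise constant $2/\pi$ in the bulk estimate: the cruder inequality $\sin u\ge 2u/\pi$ would only yield something like $m\log m$ and would lose the correct prefactor. Evaluating the integral exactly (and then applying $\cot\theta\le 1/\theta$) is what makes the bound sharp, and it is also what forces the preliminary extraction of both the nearest term $k=0$ and the second-nearest term $k=m'-1$ before the integral comparison is applied, since the function $1/\sin(\pi x/m')$ is not integrable near the origin and the two isolated values of $\delta$ are precisely those falling below the lower limit $x=1$ of the bulk integral.
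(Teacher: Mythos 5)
The paper does not prove this lemma at all: it is quoted verbatim from Mauduit--Rivat (Lemme~6, p.~1609 of \cite{MauduitRivat:Gelfond}), so your attempt is a from-scratch reconstruction. Your reduction to the coprime case, the passage to $\beta=\|b/d\|\in[0,\tfrac12]$, and the extraction of the $k=0$ and $k=m'-1$ terms (matched to the first two terms of the bound) are all fine and are the standard opening moves.

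The genuine gap is the bulk estimate. The displayed inequality
$\sum_{k=1}^{m'-2}1/\sin(\pi\delta_k/m')\le 2\int_1^{m'/2}dx/\sin(\pi x/m')$
is false in general. Take $m'=6$ and $\beta=0$: the bulk values $\delta_k$ are $1,2,2,3$, and the left side is $1/\sin(\pi/6)+2/\sin(\pi/3)+1/\sin(\pi/2)\approx 5.31$, while $2\int_1^{3}dx/\sin(\pi x/6)=\tfrac{12}{\pi}\log\cot\tfrac{\pi}{12}\approx 5.03$; indeed the bulk even exceeds the target third term $\tfrac{2m'}{\pi}\log\tfrac{2m'}{\pi}\approx 5.12$. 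The reason your integral comparison fails is structural: for a decreasing $f$, bounding $f(x_j)$ by $\int_{x_j-1}^{x_j}f$ requires that interval to lie inside the range of integration, but the smallest bulk points $1+\beta$ and $2-\beta$ force intervals reaching below $1$, so each of the two progressions contributes an unavoidable boundary term of size up to $f(1)=1/\sin(\pi/m')\asymp m'/\pi$ that your argument silently discards. Consequently the three pieces of your decomposition cannot be compared to the three terms of the bound independently: when $\beta$ is small the lemma is only true because the second term $d/\sin\tfrac{\pi d}{2m}\approx \tfrac{2m'}{\pi}d$ has slack over the single extracted term $f(1-\beta)\approx m'/\pi$, and a correct proof must route the near-extremal bulk contributions into that slack (or use a finer pairing/convexity argument with the exact constant), which is precisely the delicate point of Mauduit--Rivat's Lemme~6. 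A minor additional issue: when $m'=1$ (i.e.\ $d=m$) the third term $\tfrac{2m}{\pi}\log\tfrac{2m}{\pi d}$ is negative, so even the empty-bulk case needs a separate remark rather than a term-by-term comparison.
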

\begin{proof}
See Lemme~6 of \cite[p.~1609]{MauduitRivat:Gelfond}.
\end{proof}

We can now prove the following Type II sum estimate.
\begin{lemma}
\label{lem:TypeII}
For $g\in\mathbb{Z}_{\ge2}$, $\bm{\alpha}\in\mathscr{A}_{g}$, $L\in\mathbb{N}$, $x,M,N,\theta\in\mathbb{R}$ with
$x\ge2$, $M,N\ge 1$, $\theta>0$, $1\le x\le g^{L}$
and complex coefficients $(a_{m})_{m\in\mathbb{N}}$, $(b_{n})_{n\in\mathbb{N}}$
with $|a_{m}|,|b_{n}|\le1$ for any $m$ and $n$, we have
\[
S_{\mathrm{II}}
\coloneqq
\sum_{\substack{
M<m\le 2M\\
N<n\le 2N\\
mn\le x}}
a_{m}b_{n}
e(f_{L}(mn))
\ll
xg^{-\kappa_{\mathrm{II}}}\log x,
\]
provided
\begin{equation}
\label{lem:TypeII:MN_range}
M,N\ge x^{\theta},
\end{equation}
where
\begin{equation}
\label{lem:TypeII:def:kappa}
\kappa_{\mathrm{II}}
\coloneqq
\frac{1}{10}
\sigma_{\xi_{\mathrm{II}}}(\bm{\alpha})
\quad\text{with}\quad
\xi_{\mathrm{II}}
\coloneqq
\biggl[\theta\cdot\frac{\log x}{\log g}\biggr]
\end{equation}
and $c_{\mathrm{II}}$ and the implicit constant depends only on $g$.
\end{lemma}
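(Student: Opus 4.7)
The proof will adapt the exponential-sum machinery of Mauduit and Rivat (as extended by Martin, Mauduit and Rivat to general digital functions) to the weakly digital setting, combining Cauchy--Schwarz, van der Corput, the truncation lemma, and the discrete Fourier expansion via the product formula with the $L^{\infty}$, $L^{1}$ and hybrid bounds developed in \cref{sec:exp_sum}. The first move is to eliminate $a_{m}$ by Cauchy--Schwarz in $m$:
\[
|S_{\mathrm{II}}|^{2}
\ll
M \sum_{M<m\le 2M}\biggl|\sum_{\substack{N<n\le 2N\\ mn\le x}} b_{n}\,e(f_{L}(mn))\biggr|^{2}.
\]
Van der Corput's inequality (\cref{lem:vdC_ineq}) with a parameter $R\le N^{1/2}$ then reduces the inner square to a sum of shifted correlations
\[
\frac{N+R}{R}\sum_{|r|<R}\biggl(1-\frac{|r|}{R}\biggr)\sum_{n}b_{n+r}\bar b_{n}\,e\bigl(f_{L}(m(n+r))-f_{L}(mn)\bigr),
\]
whose diagonal $r=0$ contribution is $O(M^{2}N^{2}/R)$.

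Next I would pick $\lambda\in\mathbb{N}$ by $g^{\lambda-1}\le MR^{2}<g^{\lambda}$, assume $\lambda\le L$, and apply \cref{lem:truncation} to replace $f_{L}$ by the $g^{\lambda}$-periodic function $f_{\lambda}$ in the phase; the exceptional error of $O(MN/R)$ per shift is absorbed into the diagonal. Fourier inversion on $\mathbb{Z}/g^{\lambda}\mathbb{Z}$ gives
\[
e(f_{\lambda}(k))
=
\sum_{0\le h<g^{\lambda}} F_{\lambda}\!\biggl(\frac{h}{g^{\lambda}}\biggr)\,e\!\biggl(\frac{hk}{g^{\lambda}}\biggr),
\]
and applying this to both $f_{\lambda}(m(n+r))$ and $f_{\lambda}(mn)$ turns the correlation into a quadruple sum over $h_{1},h_{2},m,n$ whose phase is purely exponential in $m$ and $n$. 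Summing the innermost geometric sums in $m,n$ (and in $r$) via \cref{lem:sin_sum} leaves an outer sum over $h_{1},h_{2}$ weighted by $|F_{\lambda}(h_{1}/g^{\lambda})F_{\lambda}(h_{2}/g^{\lambda})|$: the $L^{\infty}$ bound \cref{lem:L_infty_general} supplies the decay $g^{-\sigma_{\lambda}(\bm{\alpha})}$, while the $L^{1}$ bound \cref{lem:F_L1_moment} and, crucially, the hybrid bound \cref{lem:hybrid_bound} control the averaging over the Farey-like fractions $k/m$ that arise. Choosing $R$ as a suitable power of $x$ so that $g^{\lambda}\asymp x^{\theta}\asymp g^{\xi_{\mathrm{II}}}$ balances the diagonal against the off-diagonal, yielding a bound of the form $|S_{\mathrm{II}}|^{2}\ll x^{2}g^{-c\sigma_{\xi_{\mathrm{II}}}(\bm{\alpha})}(\log x)^{2}$ for some absolute $c>0$; taking the square root gives the claim. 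The factor $\tfrac{1}{10}$ in $\kappa_{\mathrm{II}}$ tracks the cumulative exponent losses: a $\tfrac{1}{2}$ from Cauchy--Schwarz, another $\tfrac{1}{2}$ from van der Corput, and a residual factor from invoking the hybrid bound outside its pure $L^{\infty}$ branch.

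The main obstacle will be the quadruple Fourier sum in the penultimate step. A crude estimate would cost the full $L^{1}$-growth $g^{\eta_{g}\lambda}$ from \cref{lem:F_L1_moment} and wipe out the $L^{\infty}$ saving $g^{-\sigma_{\lambda}(\bm{\alpha})}$ from \cref{lem:L_infty_general}. The hybrid bound \cref{lem:hybrid_bound} is the decisive tool: on its favourable branch $2\mu\le\lambda$ with $\mu=[\log M/\log g]$, it delivers a saving of the correct shape once the geometric $m$-sum fixes the argument of $F_{\lambda}$ up to a fraction with controlled denominator. Verifying that the choice of $R$ places $\mu$ on this branch, and then arranging the cumulative exponents---using the monotonicity of $\sigma_{\lambda}^{[j]}(\bm{\alpha})$ supplied by \cref{lem:sigma_monotonicity} to absorb the shift artifacts produced by \cref{lem:hybrid_bound}---so that the decay factor survives the four-fold summation, will be the technical core of the proof.
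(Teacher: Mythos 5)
Your opening moves coincide with the paper's: Cauchy--Schwarz in $m$, van der Corput (\cref{lem:vdC_ineq}) with a shift parameter $R\le N^{\frac12}$, truncation of $f_L$ to the $g^{\lambda}$-periodic $f_{\lambda}$ with $g^{\lambda-1}\le MR^{2}<g^{\lambda}$ via \cref{lem:truncation}, Fourier expansion modulo $g^{\lambda}$, and \cref{lem:sin_sum} for the resulting geometric sums. But the decisive step --- how to beat the total mass $g^{2\lambda}$ of the double sum over $(h,k)$ weighted by $|F_{\lambda}(h/g^{\lambda})F_{\lambda}(-k/g^{\lambda})|\min(M,|\sin\pi\frac{(h+k)n+hr}{g^{\lambda}}|^{-1})$ while retaining a saving of size $g^{-c\,\sigma_{\xi_{\mathrm{II}}}(\bm{\alpha})}$ --- is not correctly identified. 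After your own Fourier expansion all arguments of $F_{\lambda}$ have the fixed denominator $g^{\lambda}$; no Farey-like fractions $k/m$ with $m\asymp M$ ever appear, so the hybrid bound \cref{lem:hybrid_bound}, which is a statement about sums over reduced fractions with denominators in $[M,2M]$, has nothing to act on here (it is the engine of the Type I estimate \cref{lem:TypeI}, not of Type II). What actually closes the argument in the paper is: apply \cref{lem:sin_sum} to the $n$-sum, organize the $(h,k)$-sum by $d=(h+k,g^{\lambda})$, relax to $h\equiv-k\ \mod{d}$, and then invoke the congruence-restricted $L^{1}$ moment \cref{lem:F_L1_moment}, whose whole point is to output the short \emph{shifted} factor $F_{v_g(d)}^{[\lambda-v_g(d)]}(a/g^{v_g(d)})$; the $L^{\infty}$ bound \cref{lem:L_infty_general} on that factor, a second application of \cref{lem:sin_sum} over $a\ \mod{d}$, the divisor bookkeeping $d=kg^{\delta}$ with $\tau(g^{\lambda-\delta})$, and \cref{lem:sigma_monotonicity} (to turn $\omega_g\delta+\sigma_{\lambda-\delta}^{[\delta]}(\bm{\alpha})$ into $\sigma_{\lambda}(\bm{\alpha})\ge\sigma_{\mu}(\bm{\alpha})\ge\sigma_{\xi_{\mathrm{II}}}(\bm{\alpha})=10\kappa_{\mathrm{II}}$) produce the saving. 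Your proposal mentions \cref{lem:F_L1_moment} only in passing and rests the core step on a lemma that does not apply, so the crucial estimate is missing.

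A second, related problem is the parameter choice. Since $g^{\lambda}\asymp MR^{2}\ge M$ and, after the symmetry reduction, $M$ may be as large as $x^{\frac12}$, you cannot choose $R$ so that $g^{\lambda}\asymp x^{\theta}\asymp g^{\xi_{\mathrm{II}}}$; the comparison with $\xi_{\mathrm{II}}$ must come from the monotonicity of $\sigma$ (via $\mu=[\log M/\log g]>\theta\log x/\log g$), not from forcing $g^{\lambda}$ down to $x^{\theta}$. In the paper $R$ is the tiny quantity $g^{2\kappa_{\mathrm{II}}}$, verified to satisfy $R\le N^{\frac12}$ using \cref{sigma_bound} and \cref{lem:TypeII:MN_range}, and the factor $\frac{1}{10}$ in $\kappa_{\mathrm{II}}$ is not a tally of Cauchy--Schwarz and van der Corput losses but is what makes both $\omega_g\delta+\sigma_{\lambda-\delta}^{[\delta]}(\bm{\alpha})\ge10\kappa_{\mathrm{II}}$ and the secondary bound $(1-2\eta_g)\lambda\ge10\kappa_{\mathrm{II}}$ (needed for the term estimated by the unrestricted $L^{1}$ bound \cref{lem:F_L1_moment:pure}) come out, and what survives the later extraction of $R=g^{2\kappa_{\mathrm{II}}}$.
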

\begin{proof}
We may assume $MN<x$ since otherwise $S_{\mathrm{II}}$ is an empty sum
and the assertion is trivial.
By symmetry, we may assume $M\le N$.
Since $MN<x$, we then have $M\le x^{\frac{1}{2}}$.
Take $\mu\in\mathbb{N}$ with
\begin{equation}
\label{lem:TypeII:def:mu}
g^{\mu-1}\le M<g^{\mu}.
\end{equation}
By \cref{lem:TypeII:MN_range} and $M\le x^{\frac{1}{2}}$, we have
\begin{equation}
\label{lem:TypeII:mu_range}
\theta\cdot\frac{\log x}{\log g}<\mu\le\frac{1}{2}\frac{\log x}{\log g}+1.
\end{equation}
By the Cauchy--Schwarz inequality, we have
\begin{equation}
\label{lem:TypeII:CS}
|S_{\mathrm{II}}|^{2}
\le
M
\sum_{M<m\le 2M}
\biggl|
\sum_{\substack{
N<n\le 2N\\
mn\le x
}}
b_{n}
e(f_{L}(mn))
\biggr|^{2}.
\end{equation}
Take a real parameter
\begin{equation}
\label{lem:TypeII:R_range}
R\in[1,N^{\frac{1}{2}}].
\end{equation}
By applying \cref{lem:vdC_ineq} to \cref{lem:TypeII:CS} with $R$ replaced by $[R]+1$, we have
\begin{equation}
\label{lem:TypeII:vdC}
\begin{aligned}
|S_{\mathrm{II}}|^{2}
&\ll
\frac{M(N+[R])}{[R]+1}
\sum_{M<m\le 2M}
\sum_{|r|\le R}\biggl(1-\frac{|r|}{[R]+1}\biggr)\\
&\hspace{0.2\textwidth}
\times
\sum_{\substack{
N<n,n+r\le 2N\\
mn,m(n+r)\le x
}}
b_{n+r}\overline{b_{n}}
e\bigl(f_{L}(m(n+r))-f_{L}(mn)\bigr)\\
&\ll
\frac{MN}{R}
\sum_{|r|\le R}
\sum_{\substack{
N<n,n+r\le 2N
}}
\biggl|
\sum_{\substack{
M<m\le 2M\\
mn,m(n+r)\le x
}}
e\bigl(f_{L}(m(n+r))-f_{L}(mn)\bigr)
\biggr|.
\end{aligned}
\end{equation}
By putting the contribution of $r=0$ aside
and using the symmetry between $r$ and $-r$, we have
\begin{equation}
\label{lem:TypeII:vdC_clean}
|S_{\mathrm{II}}|^{2}
\ll
\frac{MN}{R}
\sum_{1\le r\le R}
\sum_{\substack{
N<n\le 2N
}}
\biggl|
\sum_{\substack{
M<m\le 2M\\
m(n+r)\le x
}}
e\bigl(f_{L}(m(n+r))-f_{L}(mn)\bigr)
\biggr|
+
\frac{M^{2}N^{2}}{R}.
\end{equation}
We now use \cref{lem:truncation} to truncate $f_{L}$.
By \cref{lem:TypeII:R_range}, the condition $R\le N^{\frac{1}{2}}$ is satisfied.
Take $\lambda\in\mathbb{N}$ by
\begin{equation}
\label{lem:TypeII:def:lambda}
g^{\lambda-1}\le MR^{2}<g^{\lambda}
\end{equation}
as in \cref{lem:truncation}.
Since we are assuming $MN<x$, by using $x\le g^{L}$ and $R\le N^{\frac{1}{2}}$, we have
\[
g^{\lambda-1}\le MR^{2}\le MN<x\le g^{L}
\]
and so the condition $\lambda\le L$ is also satisfied.
Thus, we can use \cref{lem:truncation} to truncate $f_{L}$ as
\begin{equation}
\label{lem:TypeII:truncation}
\begin{aligned}
|S_{\mathrm{II}}|^{2}
&\ll
\frac{MN}{R}
\sum_{1\le r\le R}
\sum_{\substack{
N<n\le 2N
}}
\biggl|
\sum_{\substack{
M<m\le 2M\\
m(n+r)\le x
}}
e\bigl(f_{\lambda}(m(n+r))-f_{\lambda}(mn)\bigr)
\biggr|
+
\frac{M^{2}N^{2}}{R}\\
&\ll
MN
\sup_{1\le r\le R}S_{\mathrm{II}}(r)
+
\frac{M^{2}N^{2}}{R},
\end{aligned}
\end{equation}
where the sum $S_{\mathrm{II}}(r)$ is defined by
\begin{equation}
\label{lem:TypeII:def:SII_r}
S_{\mathrm{II}}(r)
\coloneqq
\sum_{\substack{
N<n\le 2N
}}|T(r,n)|
\quad\text{with}\quad
T(r,n)
\coloneqq
\sum_{\substack{
M<m\le 2M\\
m(n+r)\le x
}}
e\bigl(f_{\lambda}(m(n+r))-f_{\lambda}(mn)\bigr).
\end{equation}

Since $f_{\lambda}$ has a period $g^{\lambda}$,
by using the exponential sum \cref{def:F_lambda}, we can write
\begin{align}
T(r,n)
&=
\sum_{0\le u<g^{\lambda}}
e(f_{\lambda}(u))
\sum_{0\le v<g^{\lambda}}
e(-f_{\lambda}(v))
\sum_{\substack{
M<m\le 2M\\
m(n+r)\le x\\
m(n+r)\equiv u\ \mod{g^{\lambda}}\\
mn\equiv v\ \mod{g^{\lambda}}
}}
1\\
&=
\sum_{0\le u<g^{\lambda}}
e(f_{\lambda}(u))
\sum_{0\le v<g^{\lambda}}
e(-f_{\lambda}(v))\\
&\hspace{0.1\textwidth}
\times
\frac{1}{g^{2\lambda}}
\sum_{0\le h<g^{\lambda}}
\sum_{0\le k<g^{\lambda}}
\sum_{\substack{
M<m\le 2M\\
m(n+r)\le x
}}
e\biggl(\frac{h(m(n+r)-u)}{g^{\lambda}}+\frac{k(mn-v)}{g^{\lambda}}\biggr)\\
&=
\sum_{0\le h<g^{\lambda}}
\sum_{0\le k<g^{\lambda}}
F_{\lambda}\biggl(\frac{h}{g^{\lambda}}\biggr)
\overline{F_{\lambda}\biggl(-\frac{k}{g^{\lambda}}\biggr)}
\sum_{\substack{
M<m\le 2M\\
m(n+r)\le x
}}
e\biggl(\frac{(h+k)n+hr}{g^{\lambda}}m\biggr).
\end{align}
Since the inner sum over $m$ is a sum of geometric sequence, we have
\begin{equation}
\label{TypeII:Tn:geometric_seq}
|T(r,n)|
\ll
\sum_{0\le h<g^{\lambda}}
\sum_{0\le k<g^{\lambda}}
\biggl|F_{\lambda}\biggl(\frac{h}{g^{\lambda}}\biggr)\biggr|
\biggl|F_{\lambda}\biggl(-\frac{k}{g^{\lambda}}\biggr)\biggr|
\min\biggl(
M,
\frac{1}{|\sin\pi\frac{(h+k)n+hr}{g^{\lambda}}|}
\biggr).
\end{equation}
By taking the sum over $n$ and recalling \cref{lem:TypeII:def:SII_r}, we have
\begin{equation}
\label{lem:TypeII:SII_r:first}
\begin{aligned}
S_{\mathrm{II}}(r)
&\ll
\sum_{0\le h<g^{\lambda}}
\sum_{0\le k<g^{\lambda}}
\biggl|F_{\lambda}\biggl(\frac{h}{g^{\lambda}}\biggr)\biggr|
\biggl|F_{\lambda}\biggl(-\frac{k}{g^{\lambda}}\biggr)\biggr|
\sum_{N<n\le2N}
\min\biggl(
M,
\frac{1}{|\sin\pi\frac{(h+k)n+hr}{g^{\lambda}}|}
\biggr).
\end{aligned}
\end{equation}
Since the term
\[
\min\biggl(
M,
\frac{1}{|\sin\pi\frac{(h+k)n+hr}{g^{\lambda}}|}
\biggr)
\]
depends only on $n\ \mod{g^{\lambda}}$, we can bound \cref{lem:TypeII:SII_r:first} as
\begin{equation}
\label{lem:TypeII:SII_r:second}
\begin{aligned}
S_{\mathrm{II}}(r)
&\ll
\biggl(\frac{N}{g^{\lambda}}+1\biggr)
\sum_{0\le h<g^{\lambda}}
\sum_{0\le k<g^{\lambda}}
\biggl|F_{\lambda}\biggl(\frac{h}{g^{\lambda}}\biggr)\biggr|
\biggl|F_{\lambda}\biggl(-\frac{k}{g^{\lambda}}\biggr)\biggr|\\
&\hspace{0.4\textwidth}
\times
\sum_{0\le n<g^{\lambda}}
\min\biggl(
M,
\frac{1}{|\sin\pi\frac{(h+k)n+hr}{g^{\lambda}}|}
\biggr)
\end{aligned}
\end{equation}
by adding excess terms if necessary. By using \cref{lem:sin_sum} in \cref{lem:TypeII:SII_r:second}, we have
\begin{equation}
\label{lem:TypeII:T:after_sin_sum}
\begin{aligned}
S_{\mathrm{II}}(r)
&\ll
\biggl(\frac{N}{g^{\lambda}}+1\biggr)
\sum_{d\mid g^{\lambda}}
d
\sum_{\substack{
0\le h,k<g^{\lambda}\\
(h+k,g^{\lambda})=d
}}
\biggl|F_{\lambda}\biggl(\frac{h}{g^{\lambda}}\biggr)\biggr|
\biggl|F_{\lambda}\biggl(-\frac{k}{g^{\lambda}}\biggr)\biggr|
\min\biggl(
M,
\frac{1}{\sin\pi\frac{d}{g^{\lambda}}\|\frac{hr}{d}\|}
\biggr)\\
&\hspace{0.05\textwidth}
+
\biggl(\frac{N}{g^{\lambda}}+1\biggr)
g^{\lambda}(\log x)
\sum_{0\le h<g^{\lambda}}
\sum_{0\le k<g^{\lambda}}
\biggl|F_{\lambda}\biggl(\frac{h}{g^{\lambda}}\biggr)\biggr|
\biggl|F_{\lambda}\biggl(-\frac{k}{g^{\lambda}}\biggr)\biggr|\\
&\eqqcolon
\biggl(\frac{N}{g^{\lambda}}+1\biggr)
(S_{\mathrm{II},1}(r)+S_{\mathrm{II},2}(r)).
\end{aligned}
\end{equation}

We first estimate the sum $S_{\mathrm{II},1}(r)$.
Since $d\mid g^{\lambda}$ in the sum $S_{\mathrm{II},1}(r)$,
we can bound $S_{\mathrm{II},1}(r)$ as
\begin{align}
S_{\mathrm{II},1}(r)
\le
\sum_{d\mid g^{\lambda}}
d
\sum_{\substack{
0\le h,k<g^{\lambda}\\
h\equiv-k\ \mod{d}
}}
\biggl|F_{\lambda}\biggl(\frac{h}{g^{\lambda}}\biggr)\biggr|
\biggl|F_{\lambda}\biggl(-\frac{k}{g^{\lambda}}\biggr)\biggr|
\min\biggl(
M,
\frac{1}{\sin\pi\frac{d}{g^{\lambda}}\|\frac{hr}{d}\|}
\biggr)
\end{align}
by weakening the condition $(h+k,g^{\lambda})=d$ to $h\equiv-k\ \mod{d}$.
Since the function
\[
\mathbb{Z}\to\mathbb{R}\semicolon
k\mapsto
\biggl|F_{\lambda}\biggl(\frac{k}{g^{\lambda}}\biggr)\biggr|
\]
has a period $g^{\lambda}$, we can change variables via $k\leadsto {-k}$ to get
\[
S_{\mathrm{II},1}(r)
\le
\sum_{d\mid g^{\lambda}}
d
\sum_{\substack{
0\le h,k<g^{\lambda}\\
h\equiv k\ \mod{d}
}}
\biggl|F_{\lambda}\biggl(\frac{h}{g^{\lambda}}\biggr)\biggr|
\biggl|F_{\lambda}\biggl(\frac{k}{g^{\lambda}}\biggr)\biggr|
\min\biggl(
M,
\frac{1}{\sin\pi\frac{d}{g^{\lambda}}\|\frac{hr}{d}\|}
\biggr).
\]
By classifying the values of $h\equiv k\ \mod{d}$, we obtain
\begin{align}
S_{\mathrm{II},1}(r)
&=
\sum_{d\mid g^{\lambda}}
d
\sum_{0\le a<d}
\min\biggl(
M,
\frac{1}{\sin\pi\frac{d}{g^{\lambda}}\|\frac{ar}{d}\|}
\biggr)
\biggl(
\sum_{\substack{
0\le h<g^{\lambda}\\
h\equiv a\ \mod{d}
}}
\biggl|F_{\lambda}\biggl(\frac{h}{g^{\lambda}}\biggr)\biggr|
\biggr)^{2}.
\end{align}
Let $v_{g}(d)$ be the largest integer satisfying $g^{v_{g}(d)}\mid d$.
By \cref{lem:F_L1_moment}, we then have
\[
S_{\mathrm{II},1}(r)
\ll
\sum_{d\mid g^{\lambda}}
d^{1-2\eta_{g}}
g^{2\eta_{g}\lambda}
\sum_{0\le a<d}
\min\biggl(
M,
\frac{1}{\sin\pi\frac{d}{g^{\lambda}}\|\frac{ar}{d}\|}
\biggr)
\biggl|F_{v_{g}(d)}^{[\lambda-v_{g}(d)]}\biggl(\frac{a}{g^{v_{g}(d)}}\biggr)\biggr|^{2}.
\]
By using \cref{lem:L_infty_general}, we have
\begin{equation}
\label{lem:TypeII:SII_1_r:before_final_sin_sum}
S_{\mathrm{II},1}(r)
\ll
\sum_{d\mid g^{\lambda}}
d^{1-2\eta_{g}}
g^{2\eta_{g}\lambda-\sigma_{v_{g}(d)}^{[\lambda-v_{g}(d)]}(\bm{\alpha})}
\sum_{0\le a<d}
\min\biggl(
M,
\frac{1}{\sin\pi\frac{d}{g^{\lambda}}\|\frac{ar}{d}\|}
\biggr).
\end{equation}
Since $x\mapsto\sin\pi x$ is concave for $x\in[0,\frac{1}{2}]$, we have
\[
\sin\biggl(\pi\frac{d}{g^{\lambda}}\biggl\|\frac{ar}{d}\biggr\|\biggr)
\ge
\frac{d}{g^{\lambda}}\sin\biggl(\pi\biggl\|\frac{ar}{d}\biggr\|\biggr)
=
\frac{d}{g^{\lambda}}\biggl|\sin\pi\frac{ar}{d}\biggr|,
\]
and so
we can apply \cref{lem:sin_sum} to the inner sum of \cref{lem:TypeII:SII_1_r:before_final_sin_sum} to obtain
\begin{align}
\sum_{0\le a<d}
\min\biggl(
M,
\frac{1}{\sin\pi\frac{d}{g^{\lambda}}\|\frac{ar}{d}\|}
\biggr)
&\le
\frac{g^{\lambda}}{d}
\sum_{0\le a<d}
\min\biggl(
\frac{dM}{g^{\lambda}},
\frac{1}{|\sin\pi\frac{ar}{d}|}
\biggr)\\
&\ll
\frac{g^{\lambda}}{d}
\biggl(
(r,d)\frac{dM}{g^{\lambda}}
+
d\log d
\biggr)
\ll
MR+g^{\lambda}\log d
\ll
g^{\lambda}\log x
\end{align}
since $g^{\lambda}\asymp MR^{2}$ by \cref{lem:TypeII:def:lambda}.
On inserting this estimate into \cref{lem:TypeII:SII_1_r:before_final_sin_sum}, we get
\[
S_{\mathrm{II},1}(r)
\ll
(\log x)
\sum_{d\mid g^{\lambda}}
d^{1-2\eta_{g}}
g^{(1+2\eta_{g})\lambda-\sigma_{v_{g}(d)}^{[\lambda-v_{g}(d)]}(\bm{\alpha})}.
\]
We now write $d=kg^{\delta}$ with $0\le\delta\le\lambda$ and $g\nmid k$ so that $v_{g}(d)=\delta$.
This gives
\begin{equation}
\label{lem:TypeII:SII_1_r:only_divisor_sum}
S_{\mathrm{II},1}(r)
\ll
(\log x)
\sum_{0\le\delta\le\lambda}
g^{(1+2\eta_{g})\lambda+\delta(1-2\eta_{g})-\sigma_{\delta}^{[\lambda-\delta]}(\bm{\alpha})}
\sum_{\substack{
k\mid g^{\lambda-\delta}\\
g\nmid k
}}
k^{1-2\eta_{g}}.
\end{equation}
For $k\mid g^{\lambda-\delta}$ with $g\nmid k$,
we have $(k,g)\le g/2$, and so
\[
k
=
(k,g^{\lambda-\delta})
\le
(k,g)^{\lambda-\delta}
\le
(g/2)^{\lambda-\delta}.
\]
This gives
\[
\sum_{\substack{
k\mid g^{\lambda-\delta}\\
g\nmid k
}}
k^{1-2\eta_{g}}
\le
(g/2)^{(1-2\eta_{g})(\lambda-\delta)}
\tau(g^{\lambda-\delta})
\ll
g^{(1-2\eta_{g})(\lambda-\delta)-\omega_{g}(\lambda-\delta)}
\]
with
\[
\omega_{g}
\coloneqq
\frac{\log 2}{\log g}\biggl(\frac{1}{2}-\eta_{g}\biggr)
\]
by using the bound $\tau(n)\ll_{\epsilon}n^{\epsilon}$.
On inserting this estimate into \cref{lem:TypeII:SII_1_r:only_divisor_sum}, we get
\begin{equation}
\label{lem:TypeII:SII_1_r:prefinal}
\begin{aligned}
S_{\mathrm{II},1}(r)
&\ll
g^{2\lambda}
(\log x)^{2}
\max_{0\le\delta\le\lambda}
g^{-\omega_{g}(\lambda-\delta)-\sigma_{\delta}^{[\lambda-\delta]}(\bm{\alpha})}
=
g^{2\lambda}
(\log x)^{2}
\max_{0\le\delta\le\lambda}
g^{-\omega_{g}\delta-\sigma_{\lambda-\delta}^{[\delta]}(\bm{\alpha})},
\end{aligned}
\end{equation}
where we changed variables via $\delta\leadsto\lambda-\delta$ in the last equality.
By \cref{lem:TypeII:def:mu} and \cref{lem:TypeII:def:lambda}, we have
\begin{equation}
\label{lem:TypeII:mu_lambda_range}
g^{\mu-1}\le M\le MR^{2}<g^{\lambda},
\quad\text{and so}\quad
\mu\in[0,\lambda]\cap\mathbb{Z}.
\end{equation}
Thus, by \cref{lem:sigma_monotonicity:with_eta} and \cref{lem:sigma_monotonicity:with_lambda}
of \cref{lem:sigma_monotonicity} and \cref{lem:TypeII:mu_range},
we have
\[
\omega_{g}\delta+\sigma_{\lambda-\delta}^{[\delta]}(\bm{\alpha})
\ge
\sigma_{\lambda}(\bm{\alpha})
\ge
\sigma_{\mu}(\bm{\alpha})
\ge
10\kappa_{\mathrm{II}}
\]
in \cref{lem:TypeII:SII_1_r:prefinal} and so
\begin{equation}
\label{lem:TypeII:SII_1_r}
S_{\mathrm{II},1}(r)
\ll
g^{2\lambda-10\kappa_{\mathrm{II}}}(\log x)^{2}.
\end{equation}

For the sum $S_{\mathrm{II},2}(r)$,
we can simply use \cref{lem:F_L1_moment:pure} of \cref{lem:F_L1_moment} to get
\begin{align}
S_{\mathrm{II},2}(r)
&=
g^{\lambda}(\log x)
\biggl(
\sum_{0\le h<g^{\lambda}}
\biggl|F_{\lambda}\biggl(\frac{h}{g^{\lambda}}\biggr)\biggr|
\biggr)^{2}
\ll
g^{(1+2\eta_{g})\lambda}\log x
=
g^{2\lambda-(1-2\eta_{g})\lambda}\log x.
\end{align}
Since
\cref{sigma_bound},
\cref{eta_bound}, 
\cref{lem:TypeII:mu_range},
\cref{lem:TypeII:mu_lambda_range} imply
\[
(1-2\eta_{g})\lambda
\ge
\biggl(\frac{1}{2}-\eta_{g}\biggr)\xi_{\mathrm{II}}
\ge
\frac{\xi_{\mathrm{II}}}{4g^{3}\log g}
\ge
10\kappa_{\mathrm{II}},
\]
we have
\begin{equation}
\label{lem:TypeII:SII_2_r}
\begin{aligned}
S_{\mathrm{II},2}(r)
\ll
g^{2\lambda-10\kappa_{\mathrm{II}}}\log x.
\end{aligned}
\end{equation}

On inserting \cref{lem:TypeII:SII_1_r} and \cref{lem:TypeII:SII_2_r}
into \cref{lem:TypeII:T:after_sin_sum}
and recalling \cref{lem:TypeII:def:lambda}, we obtain
\[
S_{\mathrm{II}}(r)
\ll
(Ng^{\lambda}+g^{2\lambda})g^{-10\kappa_{\mathrm{II}}}(\log x)^{2}
\ll
(MNR^{2}+M^{2}R^{4})g^{-10\kappa_{\mathrm{II}}}(\log x)^{2}.
\]
Since we are assuming $M\le N$, this can be bounded by
\[
S_{\mathrm{II}}(r)
\ll
MNR^{4}g^{-10\kappa_{\mathrm{II}}}(\log x)^{2}.
\]
On inserting this estimate into \cref{lem:TypeII:truncation}, we obtain
\begin{equation}
\label{lem:TypeII:prefinal}
S_{\mathrm{II}}
\ll
MNR^{2}x^{-5\kappa_{\mathrm{II}}}\log x
+
MNR^{-\frac{1}{2}}.
\end{equation}
We now choose $R=g^{2\kappa_{\mathrm{II}}}$.
By \cref{sigma_bound}, we have
\[
2\kappa_{\mathrm{II}}
\le
\frac{\xi_{\mathrm{II}}}{2g^{3}\log g}
\le
\frac{\theta}{2}\frac{\log x}{\log g},
\]
and so, by \cref{lem:TypeII:MN_range}, we have
\[
R=g^{2\kappa_{\mathrm{II}}}\le x^{\frac{\theta}{2}}\le N^{\frac{1}{2}}
\]
as required in \cref{lem:TypeII:R_range}.
On inserting $R=g^{2\kappa_{\mathrm{II}}}$ and $MN<x$ into \cref{lem:TypeII:prefinal}, we obtain the assertion.
\end{proof}

\section{Exponential sum over primes with weakly digital function}
\label{sec:exp_sum_over_prime_wdf}
We now prove an estimate for exponential sums over primes with weakly digital function.

\begin{lemma}[Vaughan's identity]
\label{lem:Vaughan}
For $z>0$ and $n\in\mathbb{N}$, we have
\[
\Lambda(n)
=
a_{1}(n)+a_{2}(n)+a_{3}(n)+a_{4}(n)
\]
with
\begin{alignat}{3}
a_{1}(n)
&\coloneqq
\sum_{\substack{
dm=n\\
d\le z
}}
\mu(d)\log m,
&\qquad
a_{2}(n)
&\coloneqq
\sum_{\substack{
dm=n\\
d,m>z
}}
\mu(d)c_{2}(m),\\
a_{3}(n)
&\coloneqq
-
\sum_{\substack{
dm=n\\
d\le z^{2}
}}
c_{3}(d),
&\qquad
a_{4}(n)
&\coloneqq
\Lambda(n)\mathbbm{1}_{n\le z},\\
c_{2}(n)
&\coloneqq
\sum_{\substack{
dm=n\\
d>z
}}
\Lambda(d),
&\qquad
c_{3}(n)
&\coloneqq
\sum_{\substack{
dm=n\\
d,m\le z
}}
\mu(d)\Lambda(m).
\end{alignat}
\end{lemma}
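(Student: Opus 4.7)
The identity is classical, and my plan is to obtain it through a few Dirichlet-convolution manipulations starting from $\Lambda=\mu*\log$.

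First, I would use $\Lambda(n)=\sum_{dm=n}\mu(d)\log m$ and split the sum according to whether $d\le z$ or $d>z$:
\[
\Lambda(n)=\underbrace{\sum_{\substack{dm=n\\ d\le z}}\mu(d)\log m}_{=\,a_{1}(n)}+\underbrace{\sum_{\substack{dm=n\\ d>z}}\mu(d)\log m}_{=:\,R(n)}.
\]
Next, inside $R(n)$, I would substitute $\log m=\sum_{k\mid m}\Lambda(k)$ (the identity $\log=1*\Lambda$) to get
\[
R(n)=\sum_{\substack{dkl=n\\ d>z}}\mu(d)\Lambda(k),
\]
and then split this triple sum depending on whether $k>z$ or $k\le z$.

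For the $k>z$ piece, after regrouping with $m=kl$, the contribution equals $\sum_{dm=n,\,d>z}\mu(d)c_{2}(m)$. Here one checks that $c_{2}(m)=0$ for $m\le z$, since any divisor $k$ of such an $m$ satisfies $k\le m\le z$, which excludes $k>z$; hence this piece matches $a_{2}(n)$ exactly.

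For the $k\le z$ piece $S(n):=\sum_{dkl=n,\,d>z,\,k\le z}\mu(d)\Lambda(k)$, the key trick is to complete the $d$-summation by writing
\[
S(n)=\sum_{\substack{dkl=n\\ k\le z}}\mu(d)\Lambda(k)-\sum_{\substack{dkl=n\\ d\le z,\,k\le z}}\mu(d)\Lambda(k).
\]
The second term is exactly $-a_{3}(n)$: one unfolds the definition of $c_{3}$ and observes that the constraint $dk\le z^{2}$ used in $a_{3}$ is automatic from $d,k\le z$. The first term collapses by Möbius orthogonality,
\[
\sum_{\substack{dkl=n\\ k\le z}}\mu(d)\Lambda(k)=\sum_{\substack{km=n\\ k\le z}}\Lambda(k)\sum_{dl=m}\mu(d)=\sum_{\substack{km=n\\ k\le z}}\Lambda(k)\mathbbm{1}_{m=1}=\Lambda(n)\mathbbm{1}_{n\le z}=a_{4}(n).
\]
Combining everything gives $\Lambda(n)=a_{1}(n)+a_{2}(n)+a_{3}(n)+a_{4}(n)$. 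I do not anticipate a real obstacle here; the only subtlety is keeping the ranges of the summation variables straight when swapping orders of summation, and verifying the two vanishing facts ($c_{2}(m)=0$ for $m\le z$; $dk\le z^{2}$ automatic when $d,k\le z$) that make the $a_{2}$ and $a_{3}$ terms match the stated forms.
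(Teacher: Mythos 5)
Your argument is correct: starting from $\Lambda=\mu*\log$, splitting at $d\le z$, expanding $\log=1*\Lambda$, splitting again at $k\le z$, and completing the $d$-sum does yield exactly the four terms $a_1,\dots,a_4$, and your two auxiliary observations (that $c_2(m)=0$ for $m\le z$, and that $dk\le z^2$ is automatic when $d,k\le z$) are precisely what is needed to match the stated ranges in $a_2$ and $a_3$. One small wording point: when you say ``the second term is exactly $-a_3(n)$'' you mean the unsigned sum $\sum_{dkl=n,\,d\le z,\,k\le z}\mu(d)\Lambda(k)$, which indeed equals $-a_3(n)$, so that after the subtraction you pick up $+a_3(n)$; the signs work out, but it is worth stating unambiguously.

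Your route differs in presentation from the paper's. The paper introduces the truncated Dirichlet polynomials $M(s)=\sum_{n\le z}\mu(n)n^{-s}$ and $F(s)=\sum_{n\le z}\Lambda(n)n^{-s}$, verifies the algebraic identity
\[
-\frac{\zeta'}{\zeta}(s)
=
-M(s)\zeta'(s)
+\Bigl(\tfrac{1}{\zeta(s)}-M(s)\Bigr)\Bigl(-\tfrac{\zeta'}{\zeta}(s)-F(s)\Bigr)\zeta(s)
-M(s)F(s)\zeta(s)
+F(s),
\]
and reads off the four terms by comparing Dirichlet coefficients; your proof is the same decomposition carried out directly at the level of divisor sums, i.e.\ the ``coefficient'' form of that generating-function identity. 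What the paper's version buys is brevity and a transparent bookkeeping of which factors are truncated (the identity is checked by expanding the product, with no summation-range chasing); what your version buys is that it is entirely elementary and self-contained, derives the decomposition rather than verifying a given formula, and makes explicit the two range checks ($c_2$ vanishing below $z$, $d\le z^2$ automatic) that the coefficient-comparison argument absorbs silently. Both are standard and both are complete proofs of the lemma.
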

\begin{proof}
By using the truncated Dirichlet series
\[
M(s)
\coloneqq
\sum_{n\le z}
\frac{\mu(n)}{n^{s}}
\quad\text{and}\quad
F(s)
\coloneqq
\sum_{n\le z}
\frac{\Lambda(n)}{n^{s}},
\]
it suffices to see the identity
\[
-\frac{\zeta'}{\zeta}(s)
=
-M(s)\zeta'(s)
+\biggl(\frac{1}{\zeta(s)}-M(s)\biggr)\biggl(-\frac{\zeta'}{\zeta}(s)-F(s)\biggr)\zeta(s)
-M(s)F(s)\zeta(s)
+F(s)
\]
and compare the coefficients of the Dirichlet series on the left and right hand sides.
\end{proof}

\begin{theorem}
\label{thm:exp_sum}
For $g\in\mathbb{Z}_{\ge2}$, $\bm{\alpha}\in\mathscr{A}_{g}$, $L\in\mathbb{N}$,
$2\le x\le g^{L}$, we have
\[
S
\coloneqq
\sum_{n\le x}\Lambda(n)e(f_{L}(n))
\ll
xg^{-\kappa}(\log x)^{4},
\]
where
\begin{equation}
\kappa
\coloneqq
\frac{1}{10}
\sigma_{\xi}(\bm{\alpha})
\quad\text{with}\quad
\xi
\coloneqq
\biggl[\frac{1}{4}\frac{\log x}{\log g}\biggr]
\end{equation}
and the implicit constant depends only on $g$.
\end{theorem}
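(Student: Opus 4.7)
The plan is to apply Vaughan's identity (\cref{lem:Vaughan}) with truncation parameter $z \coloneqq x^{1/4}$, decomposing $S = S_1 + S_2 + S_3 + S_4$ where $S_j \coloneqq \sum_{n \le x} a_j(n) e(f_L(n))$. The trivial piece $S_4$ contributes at most $\sum_{n \le z} \Lambda(n) \ll x^{1/4}$, which is far smaller than the target $x g^{-\kappa}(\log x)^4$ since $\kappa \le \frac{\lambda}{4g^3 \log g}$ only grows like $\log x / \log g$ by \cref{sigma_bound}.

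For the Type I pieces $S_1$ and $S_3$, I would extract the outer divisor and appeal to \cref{lem:TypeI}. In $S_1 = \sum_{d \le z} \mu(d) \sum_{m \le x/d} (\log m) e(f_L(dm))$, partial summation strips the $\log m$ and leaves a sum in Type I shape with $M = z = x^{1/4}$. In $S_3 = -\sum_{d \le z^2} c_3(d) \sum_{m \le x/d} e(f_L(dm))$, the bound $|c_3(d)| \le \sum_{m'\mid d}\Lambda(m') \le \log d$ pulls out a factor $\log x$, leaving a Type I sum with $M = z^2 = x^{1/2}$; the value $z = x^{1/4}$ is chosen precisely so that the Type I constraint $M \le x^{1/2}$ is barely satisfied. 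Both cases give the exponent $\kappa_{\mathrm{I}} = \sigma_{[\log x/\log g]}(\bm{\alpha})$, and by \cref{lem:sigma_monotonicity:with_lambda} of \cref{lem:sigma_monotonicity} this is at least $\sigma_\xi(\bm{\alpha}) \ge 10\kappa \ge \kappa$, so $S_1, S_3 \ll x g^{-\kappa}(\log x)^3$.

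The Type II piece $S_2 = \sum_{dm \le x,\ d,m > z} \mu(d) c_2(m) e(f_L(dm))$ is the dominant one. I would split the ranges of $d$ and $m$ dyadically into $O((\log x)^2)$ boxes $(M,2M] \times (N,2N]$ with $M, N > z$, normalize $c_2(m)$ by the trivial bound $|c_2(m)| \le \log m \le \log x$ so the resulting coefficient has modulus at most one, and apply \cref{lem:TypeII} with $\theta = 1/4$ on each box. The numerical match is the crucial point: $z = x^{1/4}$ makes $\xi_{\mathrm{II}} = [\frac{1}{4}\log x/\log g]$ coincide exactly with $\xi$, so $\kappa_{\mathrm{II}} = \kappa$ on the nose. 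Summing over the $O((\log x)^2)$ boxes and reintroducing the normalizing $\log x$ yields $S_2 \ll x g^{-\kappa}(\log x)^4$, which combines with the Type I bounds to give the claimed estimate.

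The main subtlety, and essentially the only point that requires care, is the parameter calibration: $z = x^{1/4}$ is forced simultaneously by the Type I hypothesis $z^2 \le x^{1/2}$ and by the requirement $\log z / \log x = 1/4$ that identifies $\xi_{\mathrm{II}}$ with $\xi$. A larger $z$ would invalidate \cref{lem:TypeI} applied to $S_3$; a smaller $z$ would shrink $\xi_{\mathrm{II}}$ and weaken the Type II exponent below the target $\kappa$. Once this balance is fixed, the remainder of the proof is routine bookkeeping of the logarithmic losses from partial summation, from the coefficient bounds on $c_2$ and $c_3$, and from the dyadic decomposition, all of which assemble into the $(\log x)^4$ factor in the statement.
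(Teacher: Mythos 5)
Your proposal is correct and follows the paper's proof in all essentials: Vaughan's identity with $z=x^{1/4}$, partial summation plus \cref{lem:TypeI} for $S_{1}$, a dyadic decomposition plus \cref{lem:TypeII} with $\theta=\frac14$ (so that $\xi_{\mathrm{II}}=\xi$ and $\kappa_{\mathrm{II}}=\kappa$) for $S_{2}$, and trivial bounds for $S_{4}$. The one place you genuinely deviate is $S_{3}$: you feed the whole range $d\le z^{2}=x^{1/2}$ into \cref{lem:TypeI}, exploiting that its hypothesis $M\le x^{1/2}$ is met with equality, whereas the paper splits $S_{3}=S_{31}+S_{32}+S_{33}$ according to $m\le z$, $z<m\le z^{2}$ with $n>z$, and $z<m\le z^{2}$ with $n\le z$, handling these by Type I, Type II and a trivial bound respectively. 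Both treatments are valid here; yours is slightly shorter and even saves a power of $\log x$ on that piece, while the paper's split is the more robust template (it would survive a Type I lemma with a range shorter than $x^{1/2}$, which is the situation in the Mauduit--Rivat-type arguments this proof is modelled on). Two cosmetic points: the monotonicity $\kappa_{\mathrm{I}}=\sigma_{\xi_{\mathrm{I}}}(\bm{\alpha})\ge\sigma_{\xi}(\bm{\alpha})=10\kappa$ follows simply from $\gamma_{i}^{[j]}(\bm{\alpha})\ge0$ (the paper states this monotonicity right after \cref{def:sigma_j_alpha}), not from \cref{lem:sigma_monotonicity:with_lambda} of \cref{lem:sigma_monotonicity}; and for $S_{4}$ (and the analogous trivial comparisons) what you actually need is $g^{\kappa}\le x^{c}$ for some small fixed $c$, which is exactly what \cref{sigma_bound} gives via $\kappa\le\frac{1}{40}\frac{\xi}{g^{3}\log g}$, so your appeal to that bound is the right one even if the phrasing ``$\kappa$ only grows like $\log x/\log g$'' understates what is being used.
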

\begin{proof}
Let $z=x^{\frac{1}{4}}$.
By \cref{lem:Vaughan}, we can decompose the sum as
\begin{equation}
\label{thm:exp_sum:decomp}
S=S_{1}+S_{2}+S_{3}+S_{4},
\end{equation}
where
\begin{alignat}{3}
S_{1}
&\coloneqq
\sum_{\substack{
mn\le x\\
m\le z
}}\mu(m)(\log n)e(f_{L}(mn)),
&\qquad
S_{2}
&\coloneqq
\sum_{\substack{
mn\le x\\
m,n>z
}}
\mu(m)c_{2}(n)
e(f_{L}(mn)),\\
S_{3}
&\coloneqq
\sum_{\substack{
mn\le x\\
m\le z^{2}
}}
c_{3}(m)e(f_{L}(mn)),
&\qquad
S_{4}
&\coloneqq
\sum_{n\le z}
e(f_{L}(n)).
\end{alignat}
We estimate these sums separetely.

For the sum $S_{1}$, since $z\le x^{\frac{1}{2}}$, we can use partial summation and \cref{lem:TypeI} to get
\begin{equation}
\label{thm:exp_sum:S1}
S_{1}
\ll
(\log x)
\sum_{m\le z}
\sup_{1\le t\le x/m}
\biggl|
\sum_{n\le t}e(f_{L}(mn))
\biggr|
\ll
xg^{-\kappa}(\log x)^{3},
\end{equation}
where we used $\kappa_{\mathrm{I}}\ge\kappa$.

For the sum $S_{2}$, we can use \cref{lem:TypeII}. Note that
\begin{equation}
\label{thm:exp_sum:coefficient_bound}
|c_{2}(n)|,
|c_{3}(n)|
\le
\sum_{d\mid n}\Lambda(d)
=
\log n.
\end{equation}
We can take $\theta=\frac{1}{4}$ in \cref{lem:TypeII} so $\kappa_{\mathrm{II}}=\kappa$.
Thus, by decomposing the sum dyadically, we get
\begin{equation}
\label{thm:exp_sum:S2}
S_{2}
\ll
(\log x)^{3}
\sup_{\substack{
M,N\ge x^{\frac{1}{4}}\\
|a(m)|,|b(n)|\le1
}}
\biggl|
\sum_{\substack{
mn\le x\\
M<m\le2M\\
N<n\le2N
}}
a(m)b(n)e(f_{L}(mn))
\biggr|
\ll
xg^{-\kappa}(\log x)^{4}.
\end{equation}

For the sum $S_{3}$, we first decompose as
\begin{equation}
\label{thm:exp_sum:S3:decomp}
S_{3}
=
\sum_{\substack{
mn\le x\\
m\le z
}}
+
\sum_{\substack{
mn\le x\\
z<m\le z^{2}\\
n>z
}}
+
\sum_{\substack{
mn\le x\\
z<m\le z^{2}\\
n\le z
}}
\eqqcolon
S_{31}
+
S_{32}
+
S_{33}.
\end{equation}
By recalling \cref{thm:exp_sum:coefficient_bound},
we can apply \cref{lem:TypeI} to get
\begin{equation}
\label{thm:exp_sum:S31}
S_{31}
\ll
xg^{-\kappa}(\log x)^{3}
\end{equation}
similarly to $S_{1}$, and we can apply \cref{lem:TypeII} to get
\begin{equation}
\label{thm:exp_sum:S32}
S_{32}
\ll
xg^{-\kappa}(\log x)^{4}
\end{equation}
similarly to $S_{2}$. For the sum $S_{33}$, we just use a trivial estimate
\begin{equation}
\label{thm:exp_sum:S33}
S_{33}
\ll
z^{3}
=
x^{1-\frac{1}{4}}\log x
\ll
xg^{-\kappa}\log x,
\end{equation}
where we used the bound
\[
\kappa
\le
\frac{\xi}{20}
\le
\frac{1}{80}\frac{\log x}{\log g}
<
\frac{1}{4}\frac{\log x}{\log g}
\]
which follows by \cref{sigma_bound}.
On inserting \cref{thm:exp_sum:S31}, \cref{thm:exp_sum:S32}, \cref{thm:exp_sum:S33}
into \cref{thm:exp_sum:S3:decomp}, we obtain
\begin{equation}
\label{thm:exp_sum:S3}
S_{3}
\ll
xg^{-\kappa}(\log x)^{4}.
\end{equation}

Finally, the sum $S_{4}$ is bounded trivially as
\begin{equation}
\label{thm:exp_sum:S4}
S_{4}
\ll
z
\ll
x^{1-\frac{3}{4}}
\ll
xg^{-\kappa}
\end{equation}
similarly to $S_{33}$.

On inserting
\cref{thm:exp_sum:S1},
\cref{thm:exp_sum:S2},
\cref{thm:exp_sum:S3},
\cref{thm:exp_sum:S4}
into \cref{thm:exp_sum:decomp},  we obtain the assertion.
\end{proof}

\section{Application to digital reverse}
\label{sec:digital_reverse}
We finally apply \cref{thm:exp_sum} to digital reverse
to obtain \cref{thm:Telhcirid} and \cref{thm:Zsiflaw_Legeis}.
In order to make the digital reverse fit into the setting of \cref{thm:exp_sum},
we introduce a slightly modified digital reverse.
For $L,n\in\mathbb{Z}_{\ge0}$,
we define \textdf{the relative digital reverse} $\rev_{L}(n)$ of order $L$ by
\[
\rev_{L}(n)
\coloneqq
\sum_{0\le i<L}\epsilon_{i}(n)g^{L-i-1}.
\]
Note that then the ``absolute'' digital reverse defined by \cref{sec:intro:def:abs_digital_reverse}
can be expressed as
\[
\arev{n}
\coloneqq
\rev_{\len(n)}(n)
\]
for $n\in\mathbb{Z}_{\ge0}$.
Note that the relative and absolute digital reverses coincide
on $[g^{L-1},g^{L})\cap\mathbb{Z}$, i.e.\ 
\[
\arev{n}=\rev_{L}(n)
\quad\text{for}\quad
n\in[g^{L-1},g^{L}).
\]
To fit into the picture of weakly digital function,
consider $\bm{\alpha}_{L}=(\alpha_{L,i})_{i=0}^{\infty}\in\mathscr{A}_{g}$ given by
\[
\alpha_{L,i}(n)
\coloneqq
\alpha ng^{L-i-1}
\]
for $\alpha\in\mathbb{R}$. This seed $\bm{\alpha}_{L}$ gives
\[
\rev_{L}(n)
=
\alpha f_{L,\bm{\alpha}_{L}}(n).
\]
Note that $\bm{\alpha}_{L}$ depends on a parameter $L$,
the uniformity over $\bm{\alpha}\in\mathscr{A}_{g}$ in \cref{thm:exp_sum} is necessary.

\begin{lemma}
\label{lem:geometric_progression_mod1}
For $g\in\mathbb{Z}_{\ge2}$ and $\alpha\in\mathbb{R}\setminus\mathbb{Z}$, we have
\[
\|g^{i_{0}}\alpha\|\ge\frac{1}{g+1}
\quad\text{with}\quad
i_{0}
\coloneqq
\biggl[\frac{\log\frac{g}{(g+1)\|\alpha\|}}{\log g}\biggr].
\]
\end{lemma}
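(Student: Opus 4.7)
The plan is to reduce to the case $\alpha=\|\alpha\|$ and then read off the result from the two defining inequalities of $i_{0}$.

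First I would observe that since $g^{i_{0}}\in\mathbb{Z}$, the quantity $\|g^{i_{0}}\alpha\|$ is unchanged when we replace $\alpha$ by $\alpha-n$ for any $n\in\mathbb{Z}$, and also when we replace $\alpha$ by $-\alpha$. Thus we may assume without loss of generality that $0<\alpha\le\frac{1}{2}$, in which case $\alpha=\|\alpha\|$. Under this normalisation the definition of $i_{0}$ becomes
\[
i_{0}
=
\biggl[\frac{\log\frac{g}{(g+1)\alpha}}{\log g}\biggr],
\]
and since $\alpha\le\frac{1}{2}<\frac{g}{g+1}$ we have $\frac{g}{(g+1)\alpha}\ge\frac{2g}{g+1}>1$, so $i_{0}\ge0$.

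The next step is to unpack the floor function. By the definition of $i_{0}$ we have
\[
g^{i_{0}}\le\frac{g}{(g+1)\alpha}<g^{i_{0}+1},
\]
which rearranges to the two-sided bound
\[
\frac{1}{g+1}
<
g^{i_{0}}\alpha
\le
\frac{g}{g+1}.
\]

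It remains to conclude that the number $x\coloneqq g^{i_{0}}\alpha$, which lies in the open interval $(\tfrac{1}{g+1},\tfrac{g}{g+1}]\subset(0,1)$, satisfies $\|x\|\ge\frac{1}{g+1}$. The nearest integer to $x$ is either $0$ or $1$, and
\[
x-0
>
\frac{1}{g+1},
\qquad
1-x
\ge
1-\frac{g}{g+1}
=
\frac{1}{g+1},
\]
so $\|x\|=\min(x,1-x)\ge\frac{1}{g+1}$, giving the claim. There is no real obstacle here; the only thing to be careful about is the reduction in the first step, which ensures that the formula for $i_{0}$ written in terms of $\|\alpha\|$ produces the correct (non-negative) integer exponent for which the elementary interval argument works.
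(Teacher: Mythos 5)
Your proof is correct and follows essentially the same route as the paper: both unpack the floor in the definition of $i_{0}$ to trap $g^{i_{0}}\|\alpha\|$ in the interval $\bigl(\tfrac{1}{g+1},\tfrac{g}{g+1}\bigr]$ and then read off the distance to the nearest integer, the only cosmetic difference being that you normalise $\alpha=\|\alpha\|$ at the outset while the paper instead invokes $\|g^{i_{0}}\alpha\|=\|g^{i_{0}}\|\alpha\|\|$ at the end.
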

\begin{proof}
This is Lemma~2.7 of \cite{DMRSS:ReversiblePrime}.
For completeness, we give a proof. We have
\[
g^{i_{0}}\|\alpha\|
\ge
g^{\frac{\log\frac{g}{(g+1)\|\alpha\|}}{\log g}-1}\|\alpha\|
=
\frac{1}{g}\frac{g}{(g+1)\|\alpha\|}\cdot\|\alpha\|
=
\frac{1}{g+1}
\]
and
\[
g^{i_{0}}\|\alpha\|
\le
g^{\frac{\log\frac{g}{(g+1)\|\alpha\|}}{\log g}}\|\alpha\|
\le
\frac{g}{(g+1)\|\alpha\|}\cdot\|\alpha\|
=
1-\frac{1}{g+1}.
\]
The result now follows since $\|g^{i_{0}}\alpha\|=\|g^{i_{0}}\|\alpha\|\|$.
\end{proof}

\begin{lemma}
\label{lem:reverse:sigma_lower}
Consider $g\in\mathbb{Z}_{\ge2}$, $L\in\mathbb{Z}_{\ge0}$ and $\alpha\in\mathbb{R}$ satisfying
\begin{equation}
\label{lem:reverse:sigma_lower:alpha_cond}
\sigma
\coloneqq
\min_{0\le i\le L}\|g^{i}(g^{2}-1)\alpha\|
>0.
\end{equation}
Define $\bm{\alpha}_{L}=(\alpha_{L,i})\in\mathscr{A}_{g}$ by
$\alpha_{L,i}(n)\coloneqq\alpha ng^{L-i-1}$.
For $\lambda\in\mathbb{Z}_{\ge0}$ with $0\le\lambda\le L$,
we have
\[
\sigma_{\lambda}(\bm{\alpha}_{L})
\gg
\frac{\lambda}{\log\frac{1}{\sigma}}+O(1),
\]
where the implicit constants depend only on $g$.
\end{lemma}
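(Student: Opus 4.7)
The plan is to reduce the claim to a density statement about the sequence $T_k \coloneqq \|g^k(g^2-1)\alpha\|$ for $k \in [0,L]$, and then extract the density from \cref{lem:geometric_progression_mod1}. First I would unwind $\gamma_i^{[0]}(\bm{\alpha}_L)$: with $\alpha_{L,i}(n) = \alpha n g^{L-i-1}$, a direct computation gives
\[
g\alpha_{L,i}(n)-\alpha_{L,i+1}(n) = \alpha n g^{L-i-2}(g^2-1),
\]
so keeping only the pair $(m,n)=(0,1)$ in the sum defining $\gamma_i^{[0]}(\bm{\alpha}_L)$ yields $\gamma_i^{[0]}(\bm{\alpha}_L) \gg_g T_{L-i-2}^2$. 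Summing over $i \in \{0,\ldots,\lambda-1\}$ and substituting $k = L-i-2$ therefore reduces the lemma to
\[
\sum_{k=L-\lambda-1}^{L-2} T_k^2 \gg \frac{\lambda}{\log(1/\sigma)} + O(1).
\]
When $\lambda = L$ the single index $k = -1$ falls outside $[0,L]$; this term contributes at most $O(1)$ and can be discarded into the additive error.

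Next I would apply \cref{lem:geometric_progression_mod1} to $\beta = g^k(g^2-1)\alpha$ for each $k$ in the summation range. Call $k$ \emph{good} if $T_k \ge 1/(g+1)$; otherwise the lemma furnishes some $i_0 \ge 1$ with $T_{k+i_0} \ge 1/(g+1)$, and the hypothesis $T_k \ge \sigma$ bounds $i_0 \le M$ for $M \ll_g 1 + \log(1/\sigma)/\log g$. Hence any window of $M+1$ consecutive integers contained in $[0,L]$ must contain at least one good index: take the leftmost point of the window; it is either already good, or \cref{lem:geometric_progression_mod1} jumps to a good index within $M$ steps, which still lies in the window. Consequently the number of good $k$ in the summation range is $\gg \lambda/M$, and since each good $k$ contributes at least $1/(g+1)^2$ to $\sum T_k^2$, we obtain $\sigma_\lambda(\bm{\alpha}_L) \gg_g \lambda/M$; inserting the bound on $M$ yields the claimed $\lambda/\log(1/\sigma) + O(1)$.

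The main obstacle is the density step: \cref{lem:geometric_progression_mod1} produces only a \emph{single} good index starting from each given $k$, so one needs a deterministic upper bound on the gap between consecutive good indices, uniform in $k$. This is precisely where the hypothesis on $\sigma$ is used: since the $i_0$ in \cref{lem:geometric_progression_mod1} is controlled purely in terms of $\|\beta\|$, the uniform lower bound $T_k \ge \sigma$ translates into a uniform window length $M$. The remaining pieces---the elementary expansion of $\gamma_i^{[0]}(\bm{\alpha}_L)$, the boundary term at $\lambda = L$, and the trivial range $\sigma \ge 1/(g+1)$ in which every $k$ is already good---are routine.
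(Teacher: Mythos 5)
Your proposal is correct and follows essentially the same route as the paper: isolate the $(m,n)=(0,1)$ term in \cref{def:gamma} to reduce $\sigma_{\lambda}(\bm{\alpha}_{L})$ to $\sum_{k}\|g^{k}(g^{2}-1)\alpha\|^{2}$ over a range of length $\asymp\lambda$, then use \cref{lem:geometric_progression_mod1} together with the uniform bound $\|g^{k}(g^{2}-1)\alpha\|\ge\sigma$ to find $\gg\lambda/\log\frac{1}{\sigma}$ indices contributing at least $(g+1)^{-2}$ each. The only cosmetic differences are that the paper shifts the index via $g\|x/g\|\ge\|x\|$ instead of discarding the one out-of-range term into the $O(1)$, and phrases your window/good-index count as an explicit partition into blocks of length $J\asymp1+\log\frac{1}{\sigma}$.
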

\begin{proof}
By considering the contribution of $(m,n)=(0,1)$ to \cref{def:gamma},
for $0\le i<\lambda$, we have
\begin{align}
\gamma_{i}(\bm{\alpha}_{L})
&\gg
\|(g\alpha_{L,i}(0)-\alpha_{L,i+1}(0))
-
(g\alpha_{L,i}(1)-\alpha_{L,i+1}(1))\|^{2}\\
&=
\|(g^{L-i}-g^{L-(i+1)-1})\alpha\|^{2}
=
\|g^{L-i-2}(g^{2}-1)\alpha\|^{2}
\gg
\|g^{L-i-1}(g^{2}-1)\alpha\|^{2},
\end{align}
where we used the inequality $g\|x/g\|\ge\|x\|$ at the last inequality.
We thus have
\begin{equation}
\label{lem:reverse:sigma_lower:sigma_to_sum}
\begin{aligned}
\sigma_{\lambda}(\bm{\alpha}_{L})
=
\sum_{0\le i<\lambda}\gamma_{i}(\bm{\alpha}_{L})
&\gg
\sum_{0\le i<\lambda}
\|g^{L-i-1}(g^{2}-1)\alpha\|^{2}
=
\sum_{L-\lambda\le i<L}
\|g^{i}(g^{2}-1)\alpha\|^{2}.
\end{aligned}
\end{equation}
Let
\[
J
\coloneqq
1+\biggl[\frac{\log\frac{g}{(g+1)\sigma}}{\log g}\biggr]\ge1,\quad
K
\coloneqq
\biggl[\frac{\lambda}{J}\biggr]
\and
\alpha_{k}
\coloneqq
g^{L-\lambda+(k-1)J}(g^{2}-1)\alpha.
\]
We then have
\begin{equation}
\label{lem:reverse:sigma_lower:inner_sum}
\begin{aligned}
\sum_{L-\lambda\le i<L}
\|(g^{2}-1)g^{i}\alpha\|^{2}
&\ge
\sum_{1\le k\le K}
\sum_{L-\lambda+(k-1)J\le i<L-\lambda+kJ}
\|g^{i}(g^{2}-1)\alpha\|^{2}\\
&\ge
\sum_{1\le k\le K}
\sum_{0\le i<J}
\|g^{i}\alpha_{k}\|^{2}.
\end{aligned}
\end{equation}
By the assumption \cref{lem:reverse:sigma_lower:alpha_cond} and $0\le\lambda\le L$,
we have $\alpha_{k}\not\in\mathbb{Z}$ for all $k\in\{1,\ldots,K\}$
and so \cref{lem:geometric_progression_mod1}
is applicable to the inner sum with $\alpha\coloneqq\alpha_{k}$.
Also, \cref{lem:reverse:sigma_lower:alpha_cond} implies $\|\alpha_{k}\|\ge\sigma$.
For $0\le k<K$, let
\[
i_{0,k}
\coloneqq
\biggl[\frac{\log\frac{g}{(g+1)\|\alpha_{k}\|}}{\log g}\biggr].
\]
We then have
\[
0\le i_{0,k}\le\biggl[\frac{\log\frac{g}{(g+1)\sigma}}{\log g}\biggr]<J.
\]
Therefore, we can pick up the contribution of $i=i_{0,k}$
and use \cref{lem:geometric_progression_mod1} in \cref{lem:reverse:sigma_lower:inner_sum} to get
\begin{equation}
\label{lem:reverse:sigma_lower:inner_sum_lower_bound}
\sum_{L-\lambda\le i<L}
\|(g^{2}-1)g^{i}\alpha\|^{2}
\ge
\sum_{1\le k\le K}
\|g^{i_{0,k}}\alpha_{k}\|^{2}
\ge
\frac{K}{(g+1)^{2}}
\gg
K
\end{equation}
By using the estimate
\[
K
=
\frac{\lambda}{J}+O(1)
\ge
\frac{\lambda}{1+\frac{\log\frac{1}{\sigma}}{\log 2}}
+O(1)
\gg
\frac{\lambda}{\log\frac{1}{\sigma}}
+O(1)
\]
in \cref{lem:reverse:sigma_lower:inner_sum_lower_bound}, we get
\[
\sum_{L-\lambda\le i<L}
\|(g^{2}-1)g^{i}\alpha\|^{2}
\gg
\frac{\lambda}{\log\frac{1}{\sigma}}
+
O(1).
\]
On inserting this estimate into \cref{lem:reverse:sigma_lower:sigma_to_sum},
we obtain the result.
\end{proof}

For $g\in\mathbb{Z}_{\ge2}$, $L\in\mathbb{Z}_{\ge0}$, $a,q\in\mathbb{Z}$
with $q\ge1$ and $x\ge1$, let us write
\begin{alignat}{2}
\arev{\psi}\!_{L}(x,a,q)
&\coloneqq
\sum_{\substack{
n\le x\\
\rev_{L}(n)\equiv a\ \mod{q}
}}
\Lambda(n),\qquad
&\arev{\psi}\!_{\sharp,L}(x,a,q)
&\coloneqq
\sum_{\substack{
n\le x\\
\rev_{L}(n)\equiv a\ \mod{(q,g^{L}(g^{2}-1))}
}}
\Lambda(n),\\
\arev{\vartheta}\!_{L}(x,a,q)
&\coloneqq
\sum_{\substack{
p\le x\\
\rev_{L}(p)\equiv a\ \mod{q}
}}
\log p,\qquad
&\arev{\vartheta}\!_{\sharp,L}(x,a,q)
&\coloneqq
\sum_{\substack{
p\le x\\
\rev_{L}(p)\equiv a\ \mod{(q,g^{L}(g^{2}-1))}
}}
\log p,\\
\arev{\pi}\!_{L}(x,a,q)
&\coloneqq
\sum_{\substack{
p\le x\\
\rev_{L}(p)\equiv a\ \mod{q}
}}
1,\qquad
&\arev{\pi}\!_{\sharp,L}(x,a,q)
&\coloneqq
\sum_{\substack{
p\le x\\
\rev_{L}(p)\equiv a\ \mod{(q,g^{L}(g^{2}-1))}
}}
1.
\end{alignat}

\begin{theorem}
\label{thm:rev_psi_theta_pi_L}
For $g\in\mathbb{Z}_{\ge2}$, $L,a,q\in\mathbb{Z}$, $x\in\mathbb{R}$
with $L,q\ge1$ and $1\le x\le g^{L}$,
we have
\begin{align}
\label{thm:rev_psi_theta_pi_L:psi}
\arev{\psi}\!_{L}(x,a,q)
&=
\frac{(q,g^{L}(g^{2}-1))}{q}
\arev{\psi}\!_{\sharp,L}(x,a,q)
+
O\biggl(x\exp\biggl(-c\cdot\frac{\log x}{\log(q+1)}\biggr)\biggr),\\[2mm]
\label{thm:rev_psi_theta_pi_L:theta}
\arev{\vartheta}\!_{L}(x,a,q)
&=
\frac{(q,g^{L}(g^{2}-1))}{q}
\arev{\vartheta}\!_{\sharp,L}(x,a,q)
+
O\biggl(x\exp\biggl(-c\cdot\frac{\log x}{\log(q+1)}\biggr)\biggr),\\[2mm]
\label{thm:rev_psi_theta_pi_L:pi}
\arev{\pi}\!_{L}(x,a,q)
&=
\frac{(q,g^{L}(g^{2}-1))}{q}
\arev{\pi}\!_{\sharp,L}(x,a,q)
+
O\biggl(x\exp\biggl(-c\cdot\frac{\log x}{\log(q+1)}\biggr)\biggr)
\end{align}
provided
\begin{equation}
\label{thm:rev_psi_theta_pi_L:q_range}
q\le\exp\biggl(c\cdot\frac{\log x}{\log\log x}\biggr)
\end{equation}
with some constant $c>0$,
where $c$ and the implicit constant depend only on $g$.
\end{theorem}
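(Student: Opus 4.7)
The plan is to prove \cref{thm:rev_psi_theta_pi_L:psi} directly by discrete Fourier analysis combined with \cref{thm:exp_sum}, then to deduce \cref{thm:rev_psi_theta_pi_L:theta} and \cref{thm:rev_psi_theta_pi_L:pi} by the usual prime-power trimming and partial summation. Expanding the congruence condition via additive characters modulo $q$ gives
\[
\arev{\psi}\!_{L}(x,a,q)
=
\frac{1}{q}\sum_{0\le h<q}e(-ha/q)\sum_{n\le x}\Lambda(n)e\bigl(h\rev_{L}(n)/q\bigr).
\]
Set $d\coloneqq(q,g^{L}(g^{2}-1))$ and $q_{*}\coloneqq q/d$, and split the outer sum according to whether $q_{*}\mid h$. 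The \emph{main term} ($h=kq_{*}$ with $0\le k<d$) satisfies $h\rev_{L}(n)/q=k\rev_{L}(n)/d$ and $e(-ha/q)=e(-ka/d)$, so reversing the character-orthogonality modulo $d$ reassembles its contribution into exactly $\frac{d}{q}\arev{\psi}\!_{\sharp,L}(x,a,q)$.

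For the \emph{error term} (those $h$ with $q_{*}\nmid h$), the crucial observation is that if $q\mid g^{i}(g^{2}-1)h$ for some $0\le i\le L$, then multiplying through by $g^{L-i}$ forces $q\mid g^{L}(g^{2}-1)h$, i.e.\ $q_{*}\mid h$; contrapositively, under $q_{*}\nmid h$ the integer $g^{i}(g^{2}-1)h$ is never divisible by $q$ in that range, so
\[
\sigma
\coloneqq
\min_{0\le i\le L}\bigl\|g^{i}(g^{2}-1)(h/q)\bigr\|
\ge
\frac{1}{q}.
\]
Taking $\alpha\coloneqq h/q$ in the construction of $\bm{\alpha}_{L}$ rewrites the inner sum as $\sum_{n\le x}\Lambda(n)e(f_{L,\bm{\alpha}_{L}}(n))$, and then \cref{lem:reverse:sigma_lower} together with $\log(1/\sigma)\le\log q$ yields
\[
\sigma_{\xi}(\bm{\alpha}_{L})\gg\frac{\log x}{\log(q+1)}+O(1),
\]
so that \cref{thm:exp_sum} bounds each inner sum by $x\exp(-c\log x/\log(q+1))(\log x)^{4}$. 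Summing over the at most $q$ bad values of $h$ cancels the prefactor $1/q$, and the hypothesis \cref{thm:rev_psi_theta_pi_L:q_range} lets me absorb the $(\log x)^{4}$ factor into the exponential at the cost of a smaller constant~$c$, producing the claimed error.

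For \cref{thm:rev_psi_theta_pi_L:theta}, the differences $\arev{\psi}\!_{L}-\arev{\vartheta}\!_{L}$ and $\arev{\psi}\!_{\sharp,L}-\arev{\vartheta}\!_{\sharp,L}$ are each $O(x^{1/2}\log x)$ from the usual prime-power contribution, which is negligible compared to the error term. For \cref{thm:rev_psi_theta_pi_L:pi}, apply partial summation to $A(t)\coloneqq\arev{\vartheta}\!_{L}(t,a,q)-(d/q)\arev{\vartheta}\!_{\sharp,L}(t,a,q)$:
\[
\arev{\pi}\!_{L}(x,a,q)-\frac{d}{q}\arev{\pi}\!_{\sharp,L}(x,a,q)
=
\frac{A(x)}{\log x}+\int_{2}^{x}\frac{A(t)}{t\log^{2}t}\,dt,
\]
and bound the integral by splitting at $t=x^{1/2}$. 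The main obstacle is pinning down the lower bound $\sigma\ge 1/q$ for bad $h$ and verifying that the main-term reassembly really matches the sharp count with modulus $d$; once these two points are in place, all the heavy machinery has already been done in \cref{thm:exp_sum} and \cref{lem:reverse:sigma_lower}, and the remainder is routine orthogonality, prime-power trimming, and partial-summation bookkeeping.
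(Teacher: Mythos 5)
Your proposal is correct and follows essentially the same route as the paper: the same additive-character decomposition with the split according to $q/(q,g^{L}(g^{2}-1))\mid h$, the same observation that $q\nmid g^{L}(g^{2}-1)h$ forces $\|g^{i}(g^{2}-1)h/q\|\ge 1/q$ for all $0\le i\le L$ so that \cref{lem:reverse:sigma_lower} and \cref{thm:exp_sum} apply, and the same absorption of $(\log x)^{4}$ using \cref{thm:rev_psi_theta_pi_L:q_range}, followed by prime-power trimming and partial summation (the paper handles the application of \cref{thm:rev_psi_theta_pi_L:theta} at the point $u\in[x^{1/2},x]$ by adjusting the constant $c$, which is the only bookkeeping detail you leave implicit). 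No genuine gap.
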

\begin{proof}
We may assume $x$ is large and $q\ge2$ since otherwise the assertions are trivial.

We first prove \cref{thm:rev_psi_theta_pi_L:psi}.
By the orthogonality of additive characters, we have
\begin{equation}
\label{thm:rev_psi_theta_pi_L:decomp}
\begin{aligned}
\arev{\psi}\!_{L}(x,a,q)
&=
\frac{1}{q}
\sum_{0\le h<q}
e\biggl(-\frac{ha}{q}\biggr)
\sum_{n\le x}
\Lambda(n)
e\biggl(\frac{h\rev_{L}(n)}{q}\biggr)\\
&=
\sum_{q\mid g^{L}(g^{2}-1)h}
+
\sum_{q\nmid g^{L}(g^{2}-1)h}
\eqqcolon
S_{\mathfrak{M}}+S_{\mathfrak{m}}.
\end{aligned}
\end{equation}
For the sum $S_{\mathfrak{M}}$, by using
\[
q\mid g^{L}(g^{2}-1)h
\iff
\frac{q}{(q,g^{L}(g^{2}-1))}\mid h,
\]
by rewriting $h$ to $\frac{q}{(q,g^{L}(g^{2}-1))}h$ and using the orthogonality, we have
\begin{equation}
\label{thm:rev_psi_theta_pi_L:S_Major}
\begin{aligned}
S_{\mathfrak{M}}
&=
\frac{1}{q}
\sum_{0\le h<(q,g^{L}(g^{2}-1))}
e\biggl(-\frac{ha}{(q,g^{L}(g^{2}-1))}\biggr)
\sum_{n\le x}
\Lambda(n)
e\biggl(\frac{h\rev_{L}(n)}{(q,g^{L}(g^{2}-1))}\biggr)\\
&=
\frac{(q,g^{L}(g^{2}-1))}{q}
\arev{\psi}\!_{\sharp,L}(x,a,q).
\end{aligned}
\end{equation}
For $S_{\mathfrak{m}}$, we just bound as
\begin{equation}
\label{thm:rev_psi_theta_pi_L:S_minor:abs}
S_{\mathfrak{m}}
\le
\max_{\substack{
0\le h<q\\
q\nmid g^{L}(g^{2}-1)h
}}
\biggl|
\sum_{n\le x}
\Lambda(n)
e\biggl(\frac{h\rev_{L}(n)}{q}\biggr)
\biggr|.
\end{equation}
We then use \cref{thm:exp_sum} with $\bm{\alpha}_{L}=(\alpha_{L,i})\in\mathscr{A}_{g}$ given by
\[
\alpha_{L,i}(n)
\coloneqq
\alpha ng^{L-i-1}
\quad\text{with}\quad
\alpha
\coloneqq
\frac{h}{q}
\]
This $\bm{\alpha}_{L}$ gives
\[
f_{L,\bm{\alpha}_{L}}(n)
=
\frac{h}{q}\sum_{0\le i<L}\epsilon_{i}(n)g^{L-i-1}
=
\frac{h\rev_{L}(n)}{q}.
\]
Also, since $q\nmid g^{L}(g^{2}-1)h$ in $S_{\mathfrak{m}}$, we have
\[
\min_{0\le i\le L}\biggl\|g^{i}(g^{2}-1)\cdot\frac{h}{q}\biggr\|
\ge
\frac{1}{q}
\]
for the quantity $\sigma$ defined by \cref{lem:reverse:sigma_lower:alpha_cond}.
Thus, by \cref{lem:reverse:sigma_lower},
$\kappa$ in \cref{thm:exp_sum} satisfies
\begin{align}
\kappa
=
\frac{1}{10}\sigma_{\xi}(\bm{\alpha})
\gg
\frac{\xi}{\log\frac{1}{\sigma}}+O(1)
\gg
\frac{\log x}{\log q}+O(1),
\quad\text{where}\quad
\xi\coloneqq\biggl[\frac{1}{4}\frac{\log x}{\log g}\biggr].
\end{align}
Thus, by applying \cref{thm:exp_sum} to \cref{thm:rev_psi_theta_pi_L:S_minor:abs}, we obtain
\begin{equation}
\label{thm:rev_psi_theta_pi_L:S_minor:prefinal}
S_{\mathfrak{m}}
\ll
x(\log x)^{4}\exp\biggl(-5c\cdot\frac{\log x}{\log q}\biggr).
\end{equation}
Under the condition \cref{thm:rev_psi_theta_pi_L:q_range}, we have
\begin{equation}
\label{thm:rev_psi_theta_pi_L:log_bound}
(\log x)^{4}
=
\exp(4\log\log x)
\le
\exp\biggl(4c\cdot\frac{\log x}{\log q}\biggr).
\end{equation}
On inserting this estimate into \cref{thm:rev_psi_theta_pi_L:S_minor:prefinal}, we obtain
\begin{equation}
\label{thm:rev_psi_theta_pi_L:S_minor}
S_{\mathfrak{m}}
\ll
x\exp\biggl(-c\cdot\frac{\log x}{\log q}\biggr).
\end{equation}
On inserting \cref{thm:rev_psi_theta_pi_L:S_Major}
and \cref{thm:rev_psi_theta_pi_L:S_minor}
into \cref{thm:rev_psi_theta_pi_L:decomp}, we obtain \cref{thm:rev_psi_theta_pi_L:psi}.

We next deduce \cref{thm:rev_psi_theta_pi_L:theta} from \cref{thm:rev_psi_theta_pi_L:psi}.
In \cref{thm:rev_psi_theta_pi_L:psi}, we may assume $c\in(0,\frac{\log 2}{2})$ to assure
\[
x\exp\biggl(-c\cdot\frac{\log x}{\log(q+1)}\biggr)
\ge
x^{\frac{1}{2}}.
\]
We can then use
\[
\sum_{\substack{
p^{v}\le x\\
v\ge 2
}}
\log p
\ll
x^{\frac{1}{2}}
\]
in \cref{thm:rev_psi_theta_pi_L:psi} to get \cref{thm:rev_psi_theta_pi_L:theta}.

We finally deduce \cref{thm:rev_psi_theta_pi_L:pi} from \cref{thm:rev_psi_theta_pi_L:theta}.
We first change the value of $c$ to replace $c$ in \cref{thm:rev_psi_theta_pi_L:theta}
and the associated \cref{thm:rev_psi_theta_pi_L:q_range} by $4c$.
(We however keep the same $c$ for \cref{thm:rev_psi_theta_pi_L:q_range}
associated to \cref{thm:rev_psi_theta_pi_L:pi}.)
We may also assume that $c\in(0,\frac{\log 2}{8})$.
Then, for $x^{\frac{1}{2}}\le u\le x$, we have $1\le x\le g^{L}$ and
\[
q
\le
\exp\biggl(c\cdot\frac{\log x}{\log\log x}\biggr)
\le
\exp\biggl(4c\cdot\frac{\log u}{\log\log u}\biggr).
\]
Thus, we can apply \cref{thm:rev_psi_theta_pi_L:theta} with replacing $x$ by $u$ to get
\[
\arev{\vartheta}\!_{L}(u,a,q)
=
\frac{(q,g^{L}(g^{2}-1))}{q}
\arev{\vartheta}\!_{\sharp,L}(u,a,q)
+
O\biggl(x\exp\biggl(-c\cdot\frac{\log x}{\log(q+1)}\biggr)\biggr)
\]
for $1\le u\le x$ since this is trivial if $u\le x^{\frac{1}{2}}$.
We can then use partial summation to deduce \cref{thm:rev_psi_theta_pi_L:pi}.
\end{proof}

Let
\begin{equation}
\overleftarrow{\pi}(x,a,q)
\coloneqq
\sum_{\substack{
p\le x\\
\rev(p)\equiv a\ \mod{q}
}}
1
\quad\text{and}\quad
\overleftarrow{\pi}\!_{\sharp}(x,a,q)
\coloneqq
\sum_{\substack{
p\le x\\
\rev(p)\equiv a\ \mod{(q,g^{L}(g^{2}-1))}
}}
1
\end{equation}

\begin{theorem}
\label{thm:Zsiflaw_Legeis_pure}
For $g,a,q\in\mathbb{Z}$ and $x\ge1$ with $g\ge2$ and $q\ge1$, there is a constant $c\in(0,1)$ such that
\[
\overleftarrow{\pi}(x,a,q)
=
\frac{(q,(g^{2}-1)g^{L})}{q}
\overleftarrow{\pi}\!_{\sharp}(x,a,q)
+
O\biggl(x\exp\biggl(-c\cdot\frac{\log x}{\log(q+1)}\biggr)\biggr)
\]
provided
\begin{equation}
\label{thm:Zsiflaw_Legeis_pure:q_range}
q\le\exp\biggl(c\cdot\frac{\log x}{\log\log x}\biggr),
\end{equation}
where $L\coloneqq[\frac{\log x}{\log g}]+1$
and $c$ and the implicit constant depend only on $g$.
\end{theorem}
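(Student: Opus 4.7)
The plan is to reduce the statement to Theorem~\ref{thm:rev_psi_theta_pi_L} by decomposing $\overleftarrow{\pi}(x,a,q)$ according to the base-$g$ length of the prime. The key observation is that if $p$ has length $L'$, then $\rev(p)=\rev_{L'}(p)$, so the primes of length exactly $L'$ with $\rev(p)\equiv a\mod{q}$ are counted by the difference $\arev{\pi}\!_{L'}(g^{L'}-1,a,q)-\arev{\pi}\!_{L'}(g^{L'-1}-1,a,q)$. Splitting the range $[1,x]$ at the powers of $g$ and treating the top slice $[g^{L-1},x]$ separately, I will write
\[
\overleftarrow{\pi}(x,a,q)=\sum_{L'=1}^{L}\bigl(\arev{\pi}\!_{L'}(y_{L'},a,q)-\arev{\pi}\!_{L'}(g^{L'-1}-1,a,q)\bigr),\quad y_{L'}:=\min(g^{L'}-1,x),
\]
and apply Theorem~\ref{thm:rev_psi_theta_pi_L} to each $\arev{\pi}\!_{L'}$ term.

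Two technical points need to be addressed. First, Theorem~\ref{thm:rev_psi_theta_pi_L} produces the level-dependent factor $(q,g^{L'}(g^2-1))/q$, whereas the target identity demands the fixed factor $\delta/q$ with $\delta:=(q,g^L(g^2-1))$. These agree as soon as $L'$ exceeds the stabilization threshold $V_\ast:=\max_{p\mid g}\lceil v_p(q)/v_p(g)\rceil$, which is $\ll\log q$. Second, the hypothesis $q\le\exp(c\log y/\log\log y)$ of Theorem~\ref{thm:rev_psi_theta_pi_L}, applied with $y=g^{L'}$, fails when $L'$ is too small. I will therefore introduce a cutoff $V$, of order $\log q\,\log\log q/\log g$, chosen above both $V_\ast$ and the applicability threshold, and bound the primes of length $L'<V$ trivially by $O(g^V)$ on both sides of the identity.

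For $V\le L'\le L$, Theorem~\ref{thm:rev_psi_theta_pi_L} yields
\[
\arev{\pi}\!_{L'}(y,a,q)=\frac{\delta}{q}\arev{\pi}\!_{\sharp,L'}(y,a,q)+O\bigl(y\exp(-c\log y/\log(q+1))\bigr).
\]
On the $\sharp$-side, the telescoping sum over $L'$ reassembles $\overleftarrow{\pi}\!_\sharp(x,a,q)$ up to an $O(g^V)$ truncation loss, because for $p\in[g^{L'-1},g^{L'})$ and $L'\ge V_\ast$, the condition $\rev_{L'}(p)\equiv a\mod{(q,g^{L'}(g^2-1))}$ is the same as $\rev(p)\equiv a\mod{\delta}$. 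The accumulated Theorem~\ref{thm:rev_psi_theta_pi_L} errors, summed over the $O(L)$ levels, form a geometric series dominated by the top element, giving a total of $O(x\exp(-c\log x/\log(q+1)))$.

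The subtle point will be selecting the cutoff $V$ so that all three demands are met: $V\ge V_\ast$ for factor stability, $q\le\exp(c\log g^V/\log\log g^V)$ for Theorem~\ref{thm:rev_psi_theta_pi_L} to apply at level $V$, and $g^V\ll x\exp(-c\log x/\log(q+1))$ for the truncation loss to be absorbed by the claimed error. Under the hypothesis $q\le\exp(c\log x/\log\log x)$, the first two conditions force $V\log g\asymp\log q\,\log\log q\le c\log x\,(1+o(1))$, while the last demands $V\log g\le\log x\,(1-o(1))$. These are compatible provided the constant $c$ in the hypothesis is taken sufficiently small, which is the source of the quantitative restriction \cref{thm:Zsiflaw_Legeis_pure:q_range}.
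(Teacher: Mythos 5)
Your proposal is correct and follows essentially the same route as the paper: split $\overleftarrow{\pi}(x,a,q)$ by the base-$g$ length of $p$, use $\rev(p)=\rev_{L'}(p)$ on each block, apply \cref{thm:rev_psi_theta_pi_L} level by level, exploit the stabilization of $(q,g^{L'}(g^{2}-1))$ for $L'$ beyond a threshold of size $O(\log q)$, treat the short lengths trivially, and absorb the summed errors by shrinking $c$. The only differences are cosmetic: the paper cuts off at $g^{\ell-1}\ge x^{1/2}$ and absorbs the $O(\log x)$ levels via \cref{thm:rev_psi_theta_pi_L:log_bound}, while you cut off at $V\asymp\log q\log\log q/\log g$ and sum a geometric series; both choices work.
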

\begin{proof}
We may assume $x$ is large in terms of $g$ since otherwise the assertion is trivial.
Let $\widetilde{c}$ be the constant $c$ of \cref{thm:rev_psi_theta_pi_L}
and let $c\coloneqq\frac{\widetilde{c}}{5}$.
Since
\begin{equation}
\label{thm:Zsiflaw_Legeis_pure:absolute_to_relative_rule}
g^{\ell-1}\le n<g^{\ell}
\implies
\rev_{\ell}(n)=\rev(n)
\end{equation}
for any $\ell\in\mathbb{N}$, we have
\begin{equation}
\label{thm:Zsiflaw_Legeis_pure:absolute_to_relative}
\overleftarrow{\pi}(u,a,q)-\overleftarrow{\pi}(g^{\ell-1}-1,a,q)
=
\overleftarrow{\pi}\!_{\ell}(u,a,q)-\overleftarrow{\pi}\!_{\ell}(g^{\ell-1}-1,a,q)
\end{equation}
for $g^{\ell-1}\le u<g^{\ell}$.
For $x^{\frac{1}{2}}\le g^{\ell-1}\le u<g^{\ell}$, we have
\[
q
\le
\exp\biggl(c\cdot\frac{\log x}{\log\log x}\biggr)
\le
\exp\biggl(\widetilde{c}\cdot\frac{\log u}{\log\log u}\biggr).
\]
Thus, we can apply \cref{thm:rev_psi_theta_pi_L:pi} of \cref{thm:rev_psi_theta_pi_L}
to \cref{thm:Zsiflaw_Legeis_pure:absolute_to_relative}
and use \cref{thm:Zsiflaw_Legeis_pure:absolute_to_relative_rule} again to get
\begin{equation}
\label{thm:Zsiflaw_Legeis_pure:ell_partial_sum}
\begin{aligned}
&\overleftarrow{\pi}(u,a,q)-\overleftarrow{\pi}(g^{\ell-1}-1,a,q)\\
&=
\frac{(q,g^{\ell}(g^{2}-1))}{q}
\sum_{\substack{
g^{\ell-1}\le p\le u\\
\rev(p)\equiv a\ \mod{(q,g^{\ell}(g^{2}-1))}
}}
1
+
O\biggl(x\exp\biggl(-\widetilde{c}\cdot\frac{\log x}{\log q}\biggr)\biggr)
\end{aligned}
\end{equation}
for $g^{\ell-1}\le u<g^{\ell}$ with $x^{\frac{1}{2}}\le g^{\ell-1}$ and $g^{\ell}\le x$.
By \cref{thm:Zsiflaw_Legeis_pure:q_range} and $\ell\gg\log x$, we indeed have
\[
(q,g^{\ell}(g^{2}-1))
=
(q,g^{L}(g^{2}-1))
\]
for all $\ell$ with $x^{\frac{1}{2}}\le g^{\ell-1}$ and $g^{\ell}\le x$.
Thus, we can rewrite \cref{thm:Zsiflaw_Legeis_pure:ell_partial_sum}
with replacing $\ell$ by $L$ to get
\begin{equation}
\label{thm:Zsiflaw_Legeis_pure:L_partial_sum}
\begin{aligned}
&\overleftarrow{\pi}(u,a,q)-\overleftarrow{\pi}(g^{\ell-1}-1,a,q)\\
&=
\frac{(q,g^{L}(g^{2}-1))}{q}
\sum_{\substack{
g^{\ell-1}\le p\le u\\
\rev(p)\equiv a\ \mod{(q,g^{L}(g^{2}-1))}
}}
1
+
O\biggl(x\exp\biggl(-\widetilde{c}\cdot\frac{\log x}{\log q}\biggr)\biggr)
\end{aligned}
\end{equation}
for $g^{\ell-1}\le u<\min(g^{\ell},x)$ for any $\ell\in\mathbb{N}$
since this is trivial if $u\le x^{\frac{1}{2}}$.
Thus, by summing up \cref{thm:Zsiflaw_Legeis_pure:L_partial_sum}
over $\ell$ and using the bound similar to \cref{thm:rev_psi_theta_pi_L:log_bound},
we obtain the theorem.
\end{proof}

Finally, \cref{thm:Zsiflaw_Legeis} and \cref{thm:Telhcirid} follows from \cref{thm:Zsiflaw_Legeis_pure}
by the argument of Section~11 of \cite{BhowmikSuzuki:Telhcirid}.

\subsection*{Acknowledgements}
We would like to thank Shashi Chourasiya and Daniel R. Johnston
for communicating  their result to us and for announcing our work in their paper.
The second author would like to express his gratitude to Laboratoire
Paul Painlev\'{e}, Daniel Duverney and Emmanuel Fricain for their generous hospitality in Lille in March 2025.
The second author was supported
by JSPS KAKENHI Grant Number JP21K13772
and  both authors were supported by Labex C$^{2}$EMPI (ANR-11-LABX-0007-01).



\vspace{10mm}

\noindent
{\textsc{%
\small
Gautami Bhowmik\\[.3em]
\footnotesize
Laboratoire Paul Painlev\'{e}, Labex-CEMPI, Universit\'{e} de Lille\\
59655 Villeneuve d'Ascq Cedex, France.
}

\noindent
\small
\textit{Email address}: \texttt{gautami.bhowmik@univ-lille.fr}
}
\bigskip

\noindent
{\textsc{%
\small
Yuta Suzuki\\[.3em]
\footnotesize
Department of Mathematics, Rikkyo University,\\
3-34-1 Nishi-Ikebukuro, Toshima-ku, Tokyo 171-8501, Japan.
}

\noindent
\small
\textit{Email address}: \texttt{suzuyu@rikkyo.ac.jp}
}

\end{document}